\documentclass{article}
\usepackage{preprint}          
\usepackage[utf8]{inputenc}\usepackage[T1]{fontenc}
\usepackage{amsmath,amssymb,amsthm,mathtools}
\usepackage{graphicx,booktabs,microtype}
\usepackage{enumitem}
\usepackage[round,authoryear]{natbib}
\usepackage{hyperref}\usepackage{cleveref}
\usepackage{xcolor}
\usepackage{bm}

\theoremstyle{plain}
\newtheorem{theorem}{Theorem}\newtheorem{lemma}{Lemma}
\newtheorem{proposition}{Proposition}\newtheorem{corollary}{Corollary}
\theoremstyle{definition}
\newtheorem{definition}{Definition}
\theoremstyle{remark}\newtheorem{remark}{Remark}

\newcommand{\V}{\bm V}\newcommand{\U}{\bm U}\newcommand{\X}{\bm X}
\newcommand{\dosep}{\perp_{d}}\newcommand{\ecgsep}{\perp_{\mathrm{ECG}}}
\newcommand{\indep}{\perp\!\!\!\perp}
\newcommand{\doop}{\mathrm{do}}\newcommand{\Prob}{\mathbb{P}}\newcommand{\E}{\mathbb{E}}
\newcommand{\pa}{\mathrm{pa}}
\newcommand{\R}{\mathbb{R}}\newcommand{\eqd}{\overset{d}{=}}
\newcommand{\GL}{\mathrm{GL}}\newcommand{\Ortho}{\mathrm{O}}\newcommand{\SO}{\mathrm{SO}}
\newcommand{\rank}{\operatorname{rank}}\newcommand{\diag}{\operatorname{diag}}\newcommand{\corank}{\operatorname{corank}}

\newcommand{\interpretation}[1]{\par\nopagebreak[4]\noindent #1\par}

\title{Equilibrium Causal Games:\\ Separation, Identification, and the Identifiability of Cyclic Latent States}
\author{
Faraz Dadgostari\\
Department of Mechanical \& Industrial Engineering, Montana State University
\and
Neda Nazemi\\
Gianforte School of Computing, Montana State University
}
\date{}
\begin{document}\maketitle

\begin{abstract}
Many complex systems, including power grids, markets, and interacting populations, settle into feedback driven equilibria observed only through unknown sensors. Our Equilibrium Causal Game (ECG) joins a game to its cyclic causal model, hidden inputs, sensor map, and rules for interventions and equilibrium selection; interventions edit declared objects and recompute equilibrium.
Under stated conditions, ECG-separation is sound but incomplete in our finite/smooth examples. Back-door/half-trek routes identify observed queries. Yet for an untouched rotationally symmetric Gaussian block, second moments determine only a source-frame rotation, across which distinct-variable effects generically change.
Unknown sensing creates a separate ambiguity. In passive stable linear models without self-effects, unknown wiring and full-rank unknown sensing leave $B$ completely unidentified for $d\ge2$. Under LiNG, non-Gaussianity removes the source rotation; mechanism interventions separate sensing from interactions. With unknown support, invariant sensing, aligned responses, and well-posed single-target interventions identify $(H,B)$ up to declared equivalence. Of $d$ targets, $d-1$ suffice exactly when the sole untargeted node directly parents all others; otherwise $d$ are needed. Acquisition probes are excluded; known wiring gives no universal count.
With nonlinear sensing, isotropic Gaussian source blocks admit hidden twists within and across blocks in labelled environments preserving required radial laws. Conversely, under stated positivity, informative one-block changes, rank, and irreducibility conditions, the finest independent source-block representation is identified within the stated alternative class up to block permutation and blockwise coordinate changes, but not downstream mechanisms or the sensor/interaction split. Together, these results show which causal conclusions equilibrium data support and which require targeted experiments.

\end{abstract}

\section{Introduction}
\label{sec:intro}
\paragraph{Motivation and setting.}
We ask how a declared intervention changes an interacting system's selected equilibrium. In the ECG
intervention algebra, an intervention edits a declared ECG object and re-solves
the structural system; for linear treatment inputs, the total equilibrium effect is
$e_Y^\top(I-B)^{-1}\Gamma$. Cyclic equilibrium and latent observation through an unknown sensor map create
distinct obstacles. Cyclic
equilibrium mechanisms do not admit an acyclic recursive ordering: acyclic $d$-separation can be unsound, and
acyclification can delete feedback contributions carried by $(I-B)^{-1}$ (\Cref{prop:counter,prop:resolvent}).
At the latent layer, observing $\X=H\V$ with unknown full-column-rank $H$ need not determine $B$ in the passive,
stable, zero-diagonal unknown-support class; known support, anchors, or calibration can instead shrink the legal
fibre (\Cref{thm:gauge}).

\paragraph{Two transverse obstructions.}
The source-frame obstruction is a rotation on an un-intervened full-rotational source component. In the
Gaussian or second-moment regime it preserves observational covariance while generically changing effects
between distinct component variables (\Cref{thm:hedge,thm:gauge}). In the noiseless LiNG submodel---or
conditionally on a separate noisy-ICA identifiability theorem for the declared sensor-noise law---non-Gaussian
source acquisition reduces this rotation to permutation and scaling, but independent environment-wise ICA
neither aligns response maps nor separates $H$ from $B$ (\Cref{prop:nongauss}).

The latent-frame obstruction is the legal fixed-$M$ chart in the ambient zero-diagonal unknown-support,
unknown-$H$ class. For $d\ge2$, it persists for any fixed admissible source/noise law. Shift-only environments
likewise leave $B$ unidentified at the population structural-response level and, observationally, under the
calibrated, full-rank, mean-stable sensing conditions of \Cref{thm:linear} (\Cref{thm:gauge,prop:shift}). The two actions are transverse rather than nested
(\Cref{rem:transverse}) and require different information to constrain. In the LiNG identification branch,
aligned population response maps and well-posed single-target mechanism interventions recover the latent
factorization up to the declared equivalence under the conditions of \Cref{thm:linear}; non-Gaussian source
acquisition therefore cannot substitute for the aligned mechanism information needed to split $H$ from $B$.
For full structural recovery in this ambient unknown-support class, $K=d-1$ mechanism targets suffice if and
only if the sole untargeted node directly parents every other node in the whole graph; otherwise $K=d$, assuming
aligned response maps and well-posed targeted edits, and excluding the acquisition probes from $K$
(\Cref{thm:linear,prop:completion}). With known support, on a smooth quotient stratum where
$D\mathcal C_{G,T}$ has constant rank, local point identification is equivalent to full column rank on the actual
quotient tangent. Global point identification holds if and only if the complete legal fibre is one
declared-equivalence class, while a query is globally identified if and only if that query is constant on the
complete legal fibre; no universal target count follows (\Cref{prop:knownsupport}).

\paragraph{Contributions and scope.}
The paper makes six contributions within these declared regimes.
\begin{enumerate}[leftmargin=1.6em,itemsep=2pt,topsep=2pt]
\item It defines the ECG, its equilibrium graph, and its intervention algebra (\Cref{sec:formalism}). SCC-wise
contraction gives a jointly measurable unique equilibrium map under the existence theorem's conditions, while
regular selected branches require the separate hypotheses of \Cref{thm:regbranch}.
\item Under represented exogeneity and unique solvability with respect to every strongly connected subset,
ECG-separation is sound, while the stated finite and smooth constructions show that it is not complete
(\Cref{thm:sound,prop:complete}). For linear ECGs, the trek results provide the stated generic Gaussian-CI and
zero-partial-covariance criteria, while functional separation gives an exact structural sufficient condition
for full conditional independence under every noise law (\Cref{thm:trek,thm:trekng,thm:funcsep}).
\item For observed-layer queries, the stated back-door and valid structural half-trek routes are sufficient,
while the full-rotational Gaussian hedge gives a scoped non-identification construction
(\Cref{thm:backdoor,thm:soundid,thm:hedge}). Once $B$ is identified, weighted-potential structure supplies
testable symmetrizability and cycle restrictions. Separately, when passive information leaves a hedge fibre, the
restriction is generically gauge-reducing on the stated regular and support strata, with point identification
only in the stated cases. The codimension-based gain is not
game-specific (\Cref{thm:game}).
\item Passive linear data in the stable zero-diagonal unknown-support class with unknown full-column-rank $H$
admit the legal chart of \Cref{thm:gauge}. Given aligned response maps and the acquisition and well-posedness conditions of
\Cref{thm:linear}, full mechanism-target coverage identifies $(H,B)$ up to the declared equivalence, and
\Cref{prop:completion} gives the graph-global $K=d-1$ condition. With known support, the constrained-rank and
complete-fibre conditions replace any universal target count (\Cref{prop:knownsupport}).
\item For diffeomorphic mixing, an isotropic-Gaussian source block $S\subset\R^m$, $m\ge2$, admits the
within-block twist collision; two isotropic blocks $A\in\R^{m_A}$ and $B\in\R^{m_B}$, $m_A\ge2$, $m_B\ge1$,
admit the cross-block collision, when the labelled available environments preserve the radial/isotropic laws
used by the constructions (\Cref{thm:block}). For
independent source blocks with strictly positive $C^1$ densities and nonconstant labelled soft $C^1$
single-block reweightings, score rank and irreducibility identify the finest source-block representation, up to
the declared block gauge, among $C^1$-diffeomorphic alternatives with independent blocks, matched dimension and
image, and single-alternative-block reweightings, but not downstream components or a general
equilibrium-component decomposition and, for $d\ge2$, not the $h/B$ split (\Cref{thm:blockid-source}). For a strongly connected
standardized-Gaussian source component of size $m\ge2$ whose intervened systems are well posed, one perfect
intervention $\doop(V_j\!\leftarrow\!W_j)$ per node with $W_j\sim\mathcal N(0,s_j^2)$ supplies score rank outside
a proper algebraic measure-zero set of positive intervention variances and irreducibility at every positive
variance, whereas per-node shifts on at least two distinct targets violate irreducibility
(\Cref{rem:cycleshelp,lem:cyclesRI}).
\item Experiments 1--14 provide finite-instance evidence in the named designs without replacing the proofs or
establishing operational effectiveness (\Cref{sec:exp,sec:limits}).
\end{enumerate}

\paragraph{Roadmap.}
Related work is reviewed in \Cref{sec:related}. \Cref{sec:formalism,sec:theory} define the ECG and develop
observed-layer separation and identification. \Cref{sec:bridge,sec:model,sec:latent} distinguish the two
ambiguity actions and develop the linear and nonlinear latent results; \Cref{sec:algo} records the algorithms,
and \Cref{sec:exp} reports Experiments 1--14. The final main-text sections record limitations and computational
information (\Cref{sec:limits,sec:computational}), followed by proofs and the conditional-cumulant criterion
(\Cref{app:proofs,app:cumulant}).

\section{Related work}
\label{sec:related}
\textbf{Cyclic SCMs and separation.} \citet{Bongers2021} give the measure-theoretic theory of cyclic
SCMs---solvability, marginalization, and the generalized directed global Markov property w.r.t.\
$\sigma$-separation \citep{Forre2018}; \citet{Bongers2018theoretical} treat marginalization, and
\citet{ForreMooij2019calculus} give a do-calculus and identification algorithm for cyclic latent (io)SCMs.
\citet{Ferradini2025} introduce a sound-and-complete $p$-separation for finite consistent cyclic models, and
\citet{Dai2026equiv} characterize distributional equivalence for linear non-Gaussian cyclic latent models.
For \emph{linear} SEMs, \citet{Sullivant2010trek} prove the (acyclic) trek-separation theorem and
\citet{FoygelDraismaDrton2012} the half-trek identifiability criterion. Closest to our separation result,
\citet{Spirtes2013rank} \emph{already} treats trek-separation / entailed rank constraints for
\emph{partially non-linear and cyclic} latent models, licensing constraint-based structure search over a fixed
observed variable set; we do not extend that structure but \emph{use and re-derive} it inside a
\emph{query-relative} identification target under an \emph{unknown} observation map (a different problem), with
a self-contained proof. ECGs \emph{use} these
criteria but add the game layer.

\textbf{Equilibrium, settable systems, and potential games.} \citet{White2009settable} re-engineer causal
primitives for optimizing, interacting agents and allow multiple fixed points, with graphical separation for
settable systems developed by \citet{ChalakWhite2012settable}; \citet{Lauritzen2002chain}
read chain graphs as feedback equilibria; \citet{PetersHalpern2025gsem} propose generalized SEMs as a
super-class; \citet{Hammond2023games} give a causal-graphical semantics for games via mechanised causal game
graphs; equilibrium relations with conservation laws need causal-constraints models \citep{Blom2019ccm}.
None supplies a graphical CI/identification oracle for a \emph{cyclic equilibrium} fixed point under an unknown
observation map \emph{or} exploits potential-game structure for identification. The structure we exploit
is that of \emph{potential games} \citep{MondererShapley1996}: a smooth game is a (weighted) potential game
iff its payoff-gradient Jacobian is (diagonally) symmetric/symmetrizable---the symmetric-Hessian (Poincar\'e)
characterization---which for the linear-quadratic network games of \citet{Bramoulle2014networks} and the
variational-inequality framework of \citet{Parise2019variational} is a condition on the interaction matrix. We
turn this classical characterization into a \emph{causal} specification test (and a codimension-based
identification consequence; \Cref{thm:game}).

\textbf{Multi-agent IRL and inverse games.} Demonstrations from several agents are an equilibrium, so
``optimality'' becomes Nash/correlated/logit-QRE \citep{Yu2019maairl}; MA-AIRL learns rewards from a
logit-equilibrium data-generating process, and \citet{Liao2026inversegame} establish reward identifiability
under quantal response. These works \emph{presuppose} the equilibrium is the data-generating process; ECGs
give that process a cyclic-SCM semantics so ``intervention on the game'' is well defined and confounding can
be reasoned about \citep{Zhang2020causalimitation}.

\textbf{Electricity-market Cournot models.} Linear-demand, constant-marginal-cost Cournot competition yields
linear reaction functions; calibrated oligopoly models are standard for wholesale power
\citep{BorensteinBushnell1999}. Our semi-real experiment (\Cref{sec:exp}, Exp.~5) uses such a calibration as a
ground-truth feedback system.

\textbf{Interventional CRL (acyclic).} Identifiability from interventions is now well understood for
\emph{acyclic} latent graphs: nonparametric guarantees under a pair of distinct \emph{perfect-intervention
domains} per node \citep{vonKugelgen2023nonparam} (with a single-domain statement at $d{=}2$), linear theory
\citep{Squires2023linear,Buchholz2023linear}, soft
interventions \citep{Zhang2023identifiability,Ahuja2022interventional}, and weak supervision
\citep{Brehmer2022weakly}. \citet{Squires2023linear} prove that for linear latents a single intervention per
node is necessary and sufficient---our \Cref{thm:linear} is the cyclic generalization, where the equilibrium
map $(I-B)^{-1}$ entangles nodes and a residual hedge gauge appears. \textbf{Nonlinear ICA / world models.}
Identifiable nonlinear ICA uses auxiliary variables \citep{Hyvarinen2019nonlinear,Khemakhem2020ivae};
sparse mechanism shift \citep{Lachapelle2023additive} and grouping \citep{Morioka2023grouping} relax
independence; CITRIS \citep{Lippe2022citris} handles temporal latents but assumes \emph{no} instantaneous
relations, and iCITRIS \citep{Lippe2022icitris} relaxes this to instantaneous effects under observed
intervention targets---but its instantaneous graph is \emph{acyclic}, so neither addresses the cyclic
(equilibrium) relations central here. \textbf{Linear non-Gaussian and cyclic models.} LiNGAM
\citep{Shimizu2006lingam} and its cyclic generalization LiNG \citep{Lacerda2008cyclic} identify the
\emph{observed}-variable cyclic structure up to a distributional-equivalence class; \citet{Dai2026equiv}
characterize this class for \emph{latent}-variable linear non-Gaussian cyclic models---the closest prior to
ours, which we extend by adding interventions that collapse the equivalence class and by establishing the
hedge boundary. \textbf{Shift interventions.} backShift \citep{Rothenhausler2015backshift} identifies a
cyclic $B$ over \emph{observed causal variables} (latent confounders are allowed; identification uses second
moments across $\ge3$ shift environments); the variables themselves, however, are not a learned
representation. \citet{Hyttinen2012learning} give a complete interventional-identifiability characterization
(the pair condition) for linear cyclic models with latent variables from surgical interventions across
experiments---again over observed variables, without an unknown observation map. We show (\Cref{prop:shift}) that,
for $d\ge2$, once an \emph{unknown mixing} $\X=H\V$ sits between the cyclic state and the data, calibrated shifts identify only $H(I-B)^{-1}$ at the structural-response level
(and observationally under the mean-stable sensing conditions of \Cref{thm:linear}) and are \emph{insufficient} to
split $H$ from $B$ in the ambient unknown-support, unknown-$H$ class. The mechanisms of \Cref{thm:linear} are
one sufficient source of the missing information; known support or sensor/anchor calibration can instead shrink
the admissible fibre. At $d=1$, zero diagonality fixes $B=0$ and calibrated shifts recover $H=M_0$.
\textbf{Structural identification.} Our linear recovery uses the trek-separation oracle and the half-trek
criterion \citep{Sullivant2010trek,FoygelDraismaDrton2012} as we lift it to cyclic ECGs in \Cref{thm:trek}.

\section{The Equilibrium Causal Game (the object)}
\label{sec:formalism}
We work with the SCM tuple $\mathcal{M}=(\V,\U,\bm f,\Prob_{\U})$ of \citet{Bongers2021}: endogenous
$\V=(V_1,\dots,V_n)$, exogenous $\U$ with law $\Prob_{\U}$, structural equations
$V_i=f_i(\V_{\pa(i)},\U_i)$, and graph $\mathcal{G}(\mathcal{M})$ with $V_j\to V_i$ iff $j\in\pa(i)$. A
\emph{solution} for $\U=\bm u$ is a fixed point $\bm v=\bm f(\bm v,\bm u)$. $\mathcal{M}$ is \emph{uniquely
solvable w.r.t.}\ $O$ if for $\Prob_{\U}$-a.e.\ $\bm u$ and every value of $\V_{O^c}$ the subsystem
$\bm v_O=\bm f_O(\bm v_O,\bm v_{O^c},\bm u)$ has a unique measurable solution
$\bm v_O=\bm g_O(\bm v_{O^c},\bm u)$, the \emph{equilibrium (solution) map}.

\begin{definition}[Equilibrium Causal Game]\label{def:ecg}
An \emph{equilibrium causal game} is the tuple
\[
\begin{aligned}
\mathcal{E}=\bigl(&N,(\mathcal{A}_i),(u_i),\mathsf{Eq},\mathcal{I},\Prob_{\U},\\
                  &\mathsf{Sel},\mathcal{O},\mathcal{C}\bigr)
\end{aligned}
\]
with players $i\in\{1,\dots,N\}$, action spaces $\mathcal{A}_i$, payoffs $u_i(a_i,\bm a_{-i};\bm w)$, an
\emph{equilibrium concept} $\mathsf{Eq}$ (Nash, correlated, or logit/quantal-response with rationality
$\lambda$), an \emph{information structure} $\mathcal{I}$, exogenous law $\Prob_{\U}$, an
\emph{equilibrium-selection rule} $\mathsf{Sel}$, an \emph{observation regime} $\mathcal{O}$, and (possibly
empty) causal constraints $\mathcal{C}$ \citep{Blom2019ccm}. Its \emph{induced SCM} $\mathcal{M}(\mathcal{E})$
has endogenous $\bm a=(a_1,\dots,a_N)$ plus any coupling variables and any explicit latent source nodes required
by the represented-exogeneity convention below, exogenous primitive inputs $\U$, and structural equations
given by the $\mathsf{Eq}$-fixed-point (best-response) conditions, e.g.\ for logit-QRE
\begin{equation}\label{eq:qre}
\pi_i(a_i\mid \bm w)\;\propto\;\exp\!\bigl(\lambda\,\E_{\bm a_{-i}\sim\bm\pi_{-i}}[\,u_i(a_i,\bm a_{-i};\bm w)\,]\bigr).
\end{equation}
An SCC of $\mathcal{G}(\mathcal{M}(\mathcal{E}))$ is a \emph{cyclic equilibrium component} iff it contains a
directed cycle: either it has $\ge2$ mutually dependent variables or it is a singleton with a self-loop. A
singleton SCC without a self-loop is an \emph{acyclic singleton component}; its structural assignment is
evaluated directly after its parent components.
\end{definition}

\paragraph{Represented exogeneity.}\label{par:represented-exogeneity}
Every graphical Markov claim below uses the following convention. The stochastic inputs admit a representation
by mutually independent primitive source blocks; whenever one primitive source enters more than one structural
equation it is included as an explicit latent common parent, and $\sigma$-separation is evaluated in that augmented
graph. Thus every exogenous dependence relevant to the structural equations is present
in the graph. An arbitrary dependent $\Prob_{\U}$ whose common causes are omitted from
$\mathcal{G}^{\mathrm{eq}}$ is not covered by a graphical conditional-independence claim.

\begin{definition}[ECG-separation]\label{def:ecgsep}
Let $\mathcal{G}^{\mathrm{eq}}(\mathcal{E})\coloneqq\mathcal{G}(\mathcal{M}(\mathcal{E}))$ be the equilibrium
graph. $\bm A,\bm B$ are \emph{ECG-separated} given $\bm C$ ($\bm A\ecgsep\bm B\mid\bm C$) iff they are
$\sigma$-separated in $\mathcal{G}^{\mathrm{eq}}(\mathcal{E})$, using all of that graph's SCCs in the
$\sigma$-criterion of \citet{Forre2018,Bongers2021}; its cyclic SCCs are the equilibrium components.
\end{definition}

\begin{definition}[Intervention algebra]\label{def:intervention}
For an ECG $\mathcal{E}$, four intervention families re-solve the structural system after the edit. The SCCs of
the edited graph are recomputed, and the intervention is well-defined whenever the edited system is uniquely
solvable w.r.t.\ every recomputed SCC, including each singleton self-cycle; acyclic singleton assignments are
measurable and evaluated directly, and the component maps compose to a unique measurable global solution map:
(i) \emph{mechanism} $\doop(V_i{=}v)$; (ii) \emph{payoff} $\doop(u_i{\leftarrow}\tilde u_i)$;
(iii) \emph{player-set} $\doop(N{\leftarrow}N')$; (iv) \emph{selection}
$\doop(\mathsf{Sel}{\leftarrow}\widetilde{\mathsf{Sel}})$. All results are stated for selection-stable ECGs (in
the uniquely-solvable case, $\mathsf{Sel}$ is vacuous).
\end{definition}

\section{Separation and identification at the observed layer}
\label{sec:theory}
We first record the vehicle's soundness (a reduction), then give the three principal results.

\begin{theorem}[Existence/uniqueness of the equilibrium map]\label{thm:exist}
Suppose every cyclic equilibrium component, including every singleton self-cycle, has a fixed nonempty complete
separable metric state space with its Borel $\sigma$-field and a fixed seed point. Suppose also that there is a
Borel set $\mathcal U_0$ with $\Prob_{\U}(\mathcal U_0)=1$ such that, for every $\bm u\in\mathcal U_0$, each
cyclic component best-response map is jointly Borel in the component state, held-fixed coordinates, and
$\bm u$, maps that state space into itself, and is a contraction in the fixed complete metric defining that state space, with a
component-specific modulus $q_S<1$ uniform in the held-fixed coordinates (e.g.\ $\rho(B)<1$ in the linear case
$\bm v=B\bm v+\bm u$ on the component).
Suppose every acyclic singleton component has a structural assignment that is jointly Borel in its parents and
$\bm u$ and maps into its declared Borel state space. Then $\mathcal{M}(\mathcal{E})$ is uniquely solvable
w.r.t.\ every SCC. Composing the component solution maps in a topological order of the condensation DAG gives a
jointly measurable and a.s.-unique global equilibrium map $\bm g$.
\emph{Algebraic} unique solvability is weaker: in the linear case it holds iff $\det(I-B)\neq0$, which can
hold even when $\rho(B)\ge1$; when coefficients or forcing terms vary, their dependence on the declared
parameters is assumed Borel so that the explicit inverse solution is measurable.
\end{theorem}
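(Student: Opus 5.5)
The plan is to apply the Banach fixed-point theorem with parameters on each cyclic component, obtain measurability of the fixed point as a pointwise limit of Picard iterates, and then compose the component maps along the condensation DAG.

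\textbf{Step 1: one cyclic component.} Fix a cyclic component $S$ with complete separable metric state space $(\mathcal X_S,d_S)$ and seed $x_S^0$. For $\bm u\in\mathcal U_0$ and held-fixed coordinates $\bm y$ (the values of the parents outside $S$), the map $F_S(\cdot,\bm y,\bm u)$ sends $\mathcal X_S$ into itself and is a $q_S$-contraction. Banach's theorem then gives a unique fixed point $g_S(\bm y,\bm u)$. To show it is measurable, define the Picard iterates
\[
x^{k+1}=F_S(x^k,\bm y,\bm u), \qquad x^0=x_S^0 .
\]
By induction each $x^k$ is jointly Borel in $(\bm y,\bm u)$: compose the jointly Borel map $F_S$ with the Borel map $(\bm y,\bm u)\mapsto(x^k,\bm y,\bm u)$. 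Separability of $\mathcal X_S$ ensures the product Borel $\sigma$-field equals the Borel $\sigma$-field of the product, so these compositions stay Borel. The iterates converge pointwise, and a pointwise limit of Borel maps into a metric space is Borel. Hence $g_S$ is jointly Borel on $\mathcal Y_S\times\mathcal U_0$. Off $\mathcal U_0$ we set $g_S$ equal to the seed point, which keeps it Borel everywhere and changes nothing on a full-measure set. The uniformity of $q_S$ in $\bm y$ is not strictly needed for pointwise existence. It does make the fixed-point set uniform, since it is the same for every value of $\V_{O^c}$, which is exactly what unique solvability w.r.t.\ $S$ in the sense of \Cref{sec:formalism} requires.

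\textbf{Step 2: acyclic singletons and composition.} An acyclic singleton is uniquely solvable trivially: its assignment is its own unique solution and is Borel by hypothesis. Next, order the SCCs by a topological order of the condensation DAG, $S_1,\dots,S_r$. Define $\bm g$ inductively by $\bm v_{S_k}=g_{S_k}(\bm v_{\pa(S_k)\setminus S_k},\bm u)$. Every parent lies in an earlier component, so each coordinate is a composition of Borel maps and is jointly Borel in $\bm u$.

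Two further points complete the global claim. \emph{Uniqueness}: any solution $\bm v$ for $\bm u\in\mathcal U_0$ must satisfy each component subsystem, so by induction along the order it coincides with $\bm g(\bm u)$. This gives a.s.\ uniqueness. \emph{Unique solvability w.r.t.\ every SCC} is then Step 1 applied to each component.

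\textbf{Step 3: the linear remarks.} If $\rho(B)<1$, some induced matrix norm makes $B$ a strict contraction (a standard renorming, e.g.\ via Jordan form). Hence the affine map $\bm v\mapsto B\bm v+\bm u$ is a contraction in that complete norm, and Step 1 applies.

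For the algebraic claim, $\bm v=B\bm v+\bm u$ has the unique solution $(I-B)^{-1}\bm u$ for every $\bm u$ iff $I-B$ is invertible, i.e.\ iff $\det(I-B)\neq0$. An example with $\rho(B)\ge1$ but $\det(I-B)\ne0$ is $B=2$ in dimension one. Measurability of the explicit inverse solution follows because matrix inversion is continuous on the open set $\{\det\ne0\}$, composed with the assumed Borel parameter dependence.

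The main obstacle is Step 1's joint measurability in $(\bm y,\bm u)$ rather than existence. Existence is Banach's theorem; the delicate point is arranging that the iterates are jointly Borel, which needs separability. I would handle it with the Picard-limit argument above rather than with a measurable-selection theorem.
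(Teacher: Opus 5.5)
Your proposal is correct and follows the paper's route. Both arguments apply Banach on each cyclic component and obtain joint Borel measurability as the pointwise limit of Picard iterates from the seed, with the map set to the seed off $\mathcal U_0$; they then evaluate acyclic singletons directly, compose along a topological order of the condensation DAG so that componentwise uniqueness gives a.s. global uniqueness, and use $(I-B)^{-1}$ for the algebraic remark. The differences are cosmetic: you realize $\rho(B)<1$ as a contraction in an adapted operator norm and use the scalar self-loop $B=2$ instead of the grid component as the $\rho(B)\ge1$, $\det(I-B)\neq0$ witness, and your aside on the uniform modulus is unnecessary, since a contraction for each held value already yields unique solvability.
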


\begin{theorem}[Regular selected branches of admissible ECGs]\label{thm:regbranch}
Write the equilibrium conditions as $F(v,p)=0$, where $p=(u,\eta)$ collects exogenous and intervention
parameters. Suppose $F$ is jointly Borel, $F(\cdot,p)$ is $C^1$ in the endogenous coordinate, and a declared
branch $g(p)$ is jointly Borel, satisfies $F(g(p),p)=0$ for $\Prob_{\U}$-a.e.\ $u$ and every admissible $\eta$,
and has square nonsingular $D_vF(g(p),p)$. Then $\mathcal E$ induces a cyclic SCM with a measurable selected
equilibrium map; for fixed $p$ the selected root is locally unique in $v$, and each declared post-intervention
branch is well defined. No global root uniqueness or cross-stratum continuity follows. On any open
finite-dimensional parameter chart where $F$ is jointly $C^1$ in $(v,p)$ and $g(p)$ is the unique regular root,
$g$ is locally $C^1$ and
\[
 D_pg(p)=-D_vF(g(p),p)^{-1}D_pF(g(p),p).
\]
Thus measurability is supplied by the declared Borel branch, whereas parameter smoothness follows only on the
stated unique-regular-root charts. Nonsingularity of $D_vF$ is exactly algebraic unique solvability of the
linearization ($\det(I-B)\neq0$ in the linear case).
\emph{This covers admissible systems that contraction does not}: for $c>1/2$ the grid component has
$\rho(B)=\sqrt{2c}>1$ (naive iteration diverges) yet $\det(I-B)=1+2c\neq0$ and a nonsingular branch Jacobian,
so its equilibrium law is valid even though the fixed-point iteration is not contractive.
\end{theorem}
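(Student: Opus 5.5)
The proof splits into four parts: the measurable-SCM part, local uniqueness, the $C^1$ formula on unique-regular-root charts, and the explicit grid example.

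\textbf{Measurability and well-posedness.} We take the declared branch $g$ as the equilibrium map. Define the induced SCM exactly as in \Cref{def:ecg}, with structural equations $F(v,p)=0$ solved by $v=g(p)$. Since $g$ is jointly Borel in $p=(u,\eta)$ and $F(g(p),p)=0$ holds for $\Prob_{\U}$-a.e.\ $u$ and every admissible $\eta$, the map $u\mapsto g(u,\eta)$ is a measurable selected solution for each fixed $\eta$. Hence each declared post-intervention branch (\Cref{def:intervention}) is well defined as the pushforward of $\Prob_{\U}$ under $g(\cdot,\eta)$.

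\textbf{Local uniqueness.} Fix $p$. Because $F(\cdot,p)$ is $C^1$ and $D_vF(g(p),p)$ is nonsingular, the inverse function theorem applied to $v\mapsto F(v,p)$ gives a neighbourhood of $g(p)$ on which this map is injective. So $g(p)$ is the only root in that neighbourhood. This step uses only $C^1$ in $v$, not in $p$, which is why no cross-stratum continuity and no global uniqueness are claimed. To justify the ``no global uniqueness'' remark, I would point to a scalar example with two regular roots, e.g.\ $F(v)=v^2-1$.

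\textbf{Smooth charts.} On an open chart where $F$ is jointly $C^1$, apply the implicit function theorem at $(g(p_0),p_0)$. It yields a $C^1$ local solution $\tilde g$ with $\tilde g(p_0)=g(p_0)$. The main obstacle is identifying $\tilde g$ with the declared $g$, since $g$ is only Borel and could a priori jump to another root. The plan here is to use the chart hypothesis that $g(p)$ is the unique regular root: $\tilde g(p)$ is itself a root, and it is regular because $D_vF(\tilde g(p),p)$ stays nonsingular near $p_0$ by continuity of the determinant. Uniqueness then forces $g=\tilde g$ locally. Differentiating $F(g(p),p)=0$ by the chain rule gives
\[
D_vF\,D_pg+D_pF=0,
\]
which is the stated formula. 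The remark about $\det(I-B)$ is immediate: for $F=v-Bv-u$ we have $D_vF=I-B$.

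\textbf{Grid example.} I would take the grid component's $B$ in the form whose characteristic polynomial gives eigenvalues $\pm\sqrt{2c}$ together with $0$. For instance, a star or path with weights producing $\lambda^2=2c$ gives $\rho(B)=\sqrt{2c}$. Check that $\det(I-B)=1+2c$, using the signs from the declared grid coefficients, so it is nonzero for $c>1/2$ while $\rho(B)>1$. Then the linear map $v=(I-B)^{-1}u$ is the measurable branch the theorem requires, even though the iteration $v\mapsto Bv+u$ diverges, since $\rho(B)>1$ rules out the contraction of \Cref{thm:exist}.
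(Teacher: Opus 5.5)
Your treatment of the general statement follows the paper's proof. You take joint Borel measurability of $g$ as the hypothesis that supplies the measurable SCM. You get local uniqueness from the inverse function theorem applied to $v\mapsto F(v,p)$. On a jointly $C^1$ chart, the implicit function theorem plus the unique-regular-root hypothesis forces the $C^1$ solution $\tilde g$ to coincide with the merely Borel $g$, and the chain rule gives the formula. Your explicit argument that $\tilde g(p)$ is a regular root, hence equals $g(p)$, is a useful expansion of the paper's one-line ``uniqueness makes overlapping charts agree.''

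The grid example, however, fails as you set it up. If the nonzero spectrum of $B$ is $\{\pm\sqrt{2c}\}$ (your $\lambda^2=2c$) plus zeros, then
\[
\det(I-B)=\prod_k(1-\lambda_k)=(1-\sqrt{2c})(1+\sqrt{2c})=1-2c .
\]
So your proposed check $\det(I-B)=1+2c$ cannot succeed. Appealing to ``the signs from the declared grid coefficients'' does not help either, because the determinant depends only on the spectrum.

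Within your spectral template, $\rho(B)=\sqrt{2c}$ together with $\det(I-B)=1+2c$ forces $\lambda^2=-2c$, i.e.\ eigenvalues $\pm i\sqrt{2c}$. The reciprocal couplings must therefore have opposite signs, giving negative loop gain. Two witnesses:
\begin{itemize}
\item $B=\bigl(\begin{smallmatrix}0&c\\ -2&0\end{smallmatrix}\bigr)$, with characteristic polynomial $\lambda^2+2c$;
\item the three-node star with $B_{12}=B_{13}=c$ and $B_{21}=B_{31}=-1$, with characteristic polynomial $\lambda^3+2c\lambda$.
\end{itemize}
In both, $\det(I-B)$ is the characteristic polynomial evaluated at $1$, namely $1+2c>0$ for every $c\ge0$. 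Meanwhile $\rho(B)=\sqrt{2c}>1$ exactly when $c>1/2$.

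Your real-eigenvalue version would still give a nonsingular, non-contractive system for $c>1/2$, since $1-2c\neq0$ there, so the qualitative point survives. But it is not the stated example, and it is singular at $c=1/2$ — exactly the degeneracy the oscillatory sign structure avoids.
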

\begin{corollary}[Admissible game classes]\label{cor:games}
(i) \emph{Logit-QRE finite games}: existence by Brouwer on the compact simplex, a regular isolated branch
under nonsingular equilibrium Jacobian (uniqueness for small rationality/diagonal dominance). (ii)
\emph{Entropy-selected correlated equilibrium}: a nonempty CE polytope with a unique entropy maximizer yields
one measurable branch. (iii) \emph{Smooth constrained games}: on the actual active manifold, write the coupled
KKT conditions as a generalized equation and require locally Lipschitz dependence of its residual and constraint
data on $p$, the relevant constraint qualification, playerwise SSOSC on the true critical cones, strict
complementarity wherever an active-set reduction uses it, and strong regularity. The result is a locally
single-valued Lipschitz constrained \emph{local}-Nash/KKT branch (locally $C^1$ when the data are jointly $C^1$
and the reduced active system is nonsingular). It is a unique \emph{global} Nash branch only under a separate
global best-response sufficiency and an existence/uniqueness condition, such as an appropriately specified
strongly monotone variational inequality. (iv) \emph{Causal constraints}: including conservation or
market-clearing equations in $F$ with constant rank gives a local constraint/solution manifold, not an isolated
equilibrium. Admissibility additionally requires a declared Borel selection and a nonsingular or strongly
regular reduced system on the actual tangent/active manifold. Smooth nonsingular active-set reductions are
instances of \Cref{thm:regbranch}; a strongly regular nonsmooth generalized equation instead supplies the
analogous local Lipschitz admissibility result directly, with a declared Borel selection outside that local
chart. The local/global and manifold/isolation distinctions are load-bearing.
\end{corollary}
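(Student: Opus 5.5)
The plan is to treat the corollary as four separate instantiations of \Cref{thm:regbranch}, or of its nonsmooth analogue. In each case I will supply three things: existence of a root of $F(v,p)=0$, a jointly Borel declared branch $g$, and nonsingularity of the relevant Jacobian, or strong regularity in the nonsmooth case. Once these are in hand, \Cref{thm:regbranch} gives the measurable selected equilibrium map, local uniqueness, and, on unique-regular-root charts, the implicit-function formula for $D_pg$.

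For (i), I write the logit-QRE conditions \eqref{eq:qre} as $F(\pi,p)=\pi-\sigma_\lambda(\pi,p)=0$ on the product of simplices. Existence then follows from Brouwer, since $\sigma_\lambda$ is a continuous self-map of a compact convex set. If $D_\pi F$ is nonsingular at a root, that root is isolated, and \Cref{thm:regbranch} applies to a Borel selection obtained by a measurable-selection theorem such as Kuratowski--Ryll-Nardzewski. For uniqueness, note that $\|D_\pi\sigma_\lambda\|=O(\lambda)$, so $\sigma_\lambda$ is a contraction for small $\lambda$ and \Cref{thm:exist} applies directly; under diagonal dominance, $I-D_\pi\sigma_\lambda$ is nonsingular everywhere, and I would combine this with a degree or Poincar\'e--Hopf count to get a unique root.

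For (ii), the CE polytope is nonempty because any Nash equilibrium is a CE. Entropy is strictly concave, so its maximizer over this polytope is unique. The maximizer is also measurable in $p$, because it is the argmax of a Carathéodory function over a measurably parametrized compact polytope, which is handled by Berge and measurable-maximum arguments.

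For (iii), I stack the playerwise KKT systems into one generalized equation $0\in\Phi(v,\mu,p)+N_K(\mu)$. I then invoke Robinson's strong-regularity theorem, which gives a locally single-valued Lipschitz solution map from the constraint qualification together with SSOSC on the critical cones. Under strict complementarity, the active set is locally constant. The reduced equality system is then smooth, and its Jacobian is nonsingular, which is exactly the $D_vF$ hypothesis of \Cref{thm:regbranch}, and that theorem yields local $C^1$ smoothness. Two caveats I will record explicitly: KKT points are only local Nash equilibria unless a global sufficiency condition holds, and a strongly monotone VI gives global uniqueness through standard VI existence and uniqueness theory.

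For (iv), I will use the constant-rank theorem. If $D_vF$ has constant rank $r<n$ near the solution set, the solutions form a manifold of dimension $n-r$ rather than isolated points, so isolation fails. Admissibility therefore requires a declared Borel selection together with nonsingularity of the system reduced to the tangent space or active manifold, after which \Cref{thm:regbranch} applies.

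I expect (iii) to be the main obstacle, specifically matching playerwise SSOSC and a joint constraint qualification with strong regularity of the coupled multi-player generalized equation. Strong regularity is a property of the joint system, and the playerwise conditions do not imply it on their own. My first approach is to include strong regularity as an explicit hypothesis, as the statement does, and to justify only the reduction from strong regularity plus strict complementarity to the smooth nonsingular case of \Cref{thm:regbranch}.
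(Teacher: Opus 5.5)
Your decomposition is the paper's. It uses Brouwer and the entropy maximizer for (i)--(ii), and strong regularity of the stacked generalized equation for the Lipschitz branch in (iii), sharpened to $C^1$ via strict complementarity and a nonsingular reduced active system so that \Cref{thm:regbranch} applies. For (iv) it uses the constant-rank theorem. Your additions are workable: contraction for small $\lambda$ feeding \Cref{thm:exist}, an index count when $I-D_\pi\sigma_\lambda$ is nonsingular on the whole connected domain, and strict concavity making the entropy maximizer automatically unique. The step that needs repair is in (iii).

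You first derive the Lipschitz branch from Robinson's theorem ``CQ $+$ SSOSC $\Rightarrow$ strong regularity''. That theorem concerns the KKT system of a single program, whose Jacobian carries one symmetric Lagrangian Hessian, and it fails for a coupled game. For example, $u_1=-x_1^2/2+x_1x_2$, $u_2=-x_2^2/2+x_1x_2$ satisfies playerwise second-order sufficiency, yet its joint Jacobian $\bigl(\begin{smallmatrix}-1&1\\1&-1\end{smallmatrix}\bigr)$ is singular and the equilibria form the line $x_1=x_2$. You notice this and fall back on assuming strong regularity, as the statement and the paper do, but you never give the CQ and playerwise SSOSC their remaining job. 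In the paper they make each KKT point a strict local best response for each player with the opponents' actions fixed. That is exactly what licenses ``local-Nash'' rather than just ``KKT'' in the conclusion. Your remark that KKT points ``are only local Nash equilibria unless a global sufficiency condition holds'' takes that upgrade for granted, although a KKT point without second-order sufficiency is only stationary.

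A smaller point of the same kind arises in (i). A Kuratowski--Ryll-Nardzewski selector from the full root correspondence may pick singular roots, so it does not by itself produce the regular branch that \Cref{thm:regbranch} requires. Either assume every root is regular, so that the root set is finite and any selector works, or treat the Borel regular branch as declared, which is how the paper handles measurability.
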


\begin{theorem}[Soundness of ECG-separation; reduction]\label{thm:sound}
Assume represented exogeneity (\S\ref{par:represented-exogeneity}) and the following sufficient
loop-solvability premise for the generalized directed global Markov property:
$\mathcal{M}(\mathcal{E})$ is uniquely solvable w.r.t.\ every
strongly connected subset (in particular, a simple SCM satisfies this premise). Then
$\bm A\ecgsep\bm B\mid \bm C \Rightarrow \V_{\bm A}\indep \V_{\bm B}\mid \V_{\bm C}$. The content is the
\emph{reduction}: an equilibrium game with the algebra of \Cref{def:intervention} is a cyclic SCM whose
equilibrium components are all of its cyclic SCCs (including singleton self-cycles) and whose augmented graph
represents every common source; the
conclusion is then the generalized directed global Markov property \citep{Bongers2021,Forre2018}. Unique
solvability of an SCC alone need not imply unique solvability of each strict strongly connected sub-loop
$L\subsetneq S$ (for example, $\det(I-B_S)\neq0$ need not imply $\det(I-B_L)\neq0$). For the \emph{linear}
results this gap is immaterial because trek soundness (\Cref{thm:trek}b) is proved separately under algebraic
solvability. Finite computations found $0$ violations across $34{,}068$ $\sigma$-checks, a nonlinear/binary
instance, and $3{,}000$ exact finite models; these calculations corroborate the stated reduction.
\end{theorem}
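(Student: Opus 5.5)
The plan is to prove \Cref{thm:sound} as a reduction: show that $\mathcal{M}(\mathcal{E})$, equipped with the represented-exogeneity augmentation, is exactly the kind of cyclic SCM for which \citet{Bongers2021} and \citet{Forre2018} prove the generalized directed global Markov property with respect to $\sigma$-separation. The conclusion then follows by invoking that theorem.

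\textbf{Step 1: build the augmented SCM.} Starting from \Cref{def:ecg}, I take endogenous variables to be the actions $\bm a$, the coupling variables, and one explicit latent node for each primitive source block that enters more than one structural equation. Sources entering a single equation remain private exogenous inputs. By the represented-exogeneity convention these blocks are mutually independent, so the augmented model has independent exogenous noise, one independent component per endogenous variable or latent node. This is the hypothesis the cited Markov theorem needs.

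\textbf{Step 2: match the graphs.} I check that the graph of the augmented SCM coincides with $\mathcal{G}^{\mathrm{eq}}(\mathcal{E})$, the graph in which ECG-separation is evaluated (\Cref{def:ecgsep}). I also check that its SCCs are precisely the equilibrium components together with the acyclic singletons. The latent source nodes have no parents, so they are acyclic singletons and create no new cycles. Since ECG-separation is defined as $\sigma$-separation using all SCCs of this graph, it is literally the $\sigma$-separation relation of the cited theorem. Marginalizing the latent nodes afterwards does not matter, because the conditional-independence claim concerns only $\V_{\bm A},\V_{\bm B},\V_{\bm C}$.

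\textbf{Step 3: verify the solvability premise.} The hypothesis is unique solvability with respect to every strongly connected subset. This is the premise under which \citet{Bongers2021} establish the generalized directed global Markov property (simple SCMs satisfy it). Adding parentless latent nodes preserves it: the only strongly connected subsets containing a latent node are the singletons, and those are trivially solvable as $v=u$. Adding these nodes also changes no loop among the original variables. Applying the theorem then gives $\bm A\ecgsep\bm B\mid\bm C\Rightarrow \V_{\bm A}\indep\V_{\bm B}\mid\V_{\bm C}$ for the observational law. The same argument applies to any well-defined edited system under \Cref{def:intervention}, provided the edit preserves the premise. Here selection-stability makes $\mathsf{Sel}$ vacuous in the uniquely solvable case.

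\textbf{Step 4: the remark about sub-loops.} I will give a small linear example in which $\det(I-B_S)\ne0$ but $\det(I-B_L)=0$ for a sub-loop $L\subsetneq S$. An example is a self-loop of weight $1$ on one node inside a larger cycle whose total determinant is nonzero. This shows the per-subset premise cannot be weakened to per-SCC solvability. The reported numerical checks are only corroboration and are not part of the proof.

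The main obstacle is Step 1, where the measurability and independence bookkeeping must match the cited framework exactly. The exogenous variables of the game, together with the payoff parameters $\bm w$, have to be split cleanly into independent blocks. Any dependence not carried by an explicit latent parent would break the theorem's hypothesis, and the convention excludes exactly those cases. I would handle this by defining the augmentation explicitly and checking that the solution maps of the augmented model are measurable compositions of the original solution maps with the identity on the source nodes.
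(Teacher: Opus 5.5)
Your reduction is essentially the paper's proof. You make each shared primitive source an explicit parentless latent parent so the remaining source blocks are mutually independent, read ECG-separation as $\sigma$-separation in that augmented graph, and invoke the generalized directed global Markov property under the every-strongly-connected-subset solvability premise.

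One caution on Step~4. Your self-loop example correctly gives the stated non-implication ($\det(I-B_S)\neq0$ but $\det(I-B_L)=0$). However, it distinguishes the two hypotheses, not the Markov conclusion, so it does not show the premise ``cannot be weakened'' to per-SCC solvability. That is why the paper calls its premise only sufficient.
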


\begin{theorem}[Soundness holds; completeness fails]\label{prop:complete}
Under the represented-exogeneity and solvability premises of \Cref{thm:sound}, ECG-separation is sound but, in
general, \emph{not complete}. \textbf{(i) Not model-complete:} on
$V_0\!\to\!V_2$, $V_1\!\to\!V_3$, $V_2\!\leftrightarrows\!V_3$, $V_0\not\ecgsep V_1\mid\{V_2,V_3\}$ yet
$V_0\indep V_1\mid V_2,V_3$ in every \emph{finite-state} uniquely-solvable ECG on that graph whose private
source blocks $(U_0,U_2)$ and $(U_1,U_3)$ are jointly independent. The same conclusion holds for smooth ECGs
with that private-block independence under the standing regularity: each private block has a joint full-support density;
conditioning on $(V_2,V_3)$ gives an a.e.\ proper inverse with a valid coarea formula; the critical set where
$|1-(\partial f_2/\partial v_3)(\partial f_3/\partial v_2)|$ vanishes has Lebesgue measure zero; and the
\emph{complete} coarea weight, including the sum over all inverse branches when the inverse is finite-to-one, is
multiplicatively $(V_0;V_1)$-separable given $(V_2,V_3)$. A sufficient special case is a single a.e.\ inverse
branch whose cycle loop-gain $(\partial f_2/\partial v_3)(\partial f_3/\partial v_2)$ is
$\sigma(V_2,V_3)$-measurable along the solution (the linear/additive class included). These classes already
refute completeness. A finite exact-law calculation found the stated independence in $4000$ sampled binary
consistent SCMs, and direct algebra gives it in the linear/additive smooth class. The universal claim does
\emph{not} extend to \emph{all} smooth ECGs: $V_2{=}V_0V_3{+}U_2$, $V_3{=}V_1V_2{+}U_3$ (standard-normal
$U$, a.s.\ uniquely solvable) has $V_0\not\indep V_1\mid V_2,V_3$---the cycle Jacobian $|1{-}V_0V_1|$ is not
multiplicatively $(V_0;V_1)$-separable given $(V_2,V_3)$. \textbf{(ii) Not per-distribution faithful} (finite $41\%$; linear-Gaussian
$0.32\%$ robust, $0\%$ acyclic). \textbf{(iii)} The sound-\emph{and}-complete criterion for finite consistent
models is the \emph{distinct} $p$-separation of \citet{Ferradini2025}. We claim no completeness theorem for
$\sigma$-separation; (i) is an explicit counterexample.
\end{theorem}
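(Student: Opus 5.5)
Soundness is inherited directly from \Cref{thm:sound}, so the work is to exhibit the non-completeness witnesses and the smooth counterexample. The plan has four steps.

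\textbf{Step 1: $\sigma$-connection.} I first verify $V_0\not\ecgsep V_1\mid\{V_2,V_3\}$ on the stated graph. The nodes $V_2,V_3$ form one SCC, and both are conditioned on. The walk $V_0\to V_2\leftrightarrows V_3\leftarrow V_1$ has colliders at $V_2$ and $V_3$, which lie in the conditioning set. The non-collider step $V_2\to V_3$ stays inside an SCC, which $\sigma$-separation does not block. Hence the walk is $\sigma$-open. The augmented graph adds only private latent parents $U_0,U_2$ and $U_1,U_3$ and no shared sources, so this conclusion is unchanged.

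\textbf{Step 2: finite-state independence.} Write $S=\{(v_2,v_3)\}$ and let $(u_0,u_2)$ and $(u_1,u_3)$ be independent blocks with laws $P_a$ and $P_b$. Unique solvability means that for each $(v_0,v_1,u_2,u_3)$ there is exactly one solution $(v_2,v_3)$. The event $\{V_2=v_2,V_3=v_3\}$ equals the event that $v_2=f_2(v_0,v_3,u_2)$ and $v_3=f_3(v_1,v_2,u_3)$. This holds because any solution of the fixed-point system is the unique one, so for fixed $(v_2,v_3)$ the event is exactly the conjunction of the two equations. The indicator therefore factorizes as $\mathbf 1\{f_2(v_0,v_3,u_2)=v_2\}\cdot\mathbf 1\{f_3(v_1,v_2,u_3)=v_3\}$. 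The first factor is a function of block $a$ and the second a function of block $b$. The joint law of $(V_0,V_1,V_2,V_3)$ is thus a product $\phi_a(v_0;v_2,v_3)\,\phi_b(v_1;v_2,v_3)$, which gives $V_0\indep V_1\mid V_2,V_3$.

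\textbf{Step 3: smooth case.} Here I use the same factorization with densities and the change of variables $(u_2,u_3)\mapsto(v_2,v_3)$ at fixed $(v_0,v_1)$. The Jacobian of the forward map $(v_2,v_3)\mapsto(u_2,u_3)$ is $\det\begin{pmatrix}1&-\partial_3f_2\\-\partial_2f_3&1\end{pmatrix}$ when the $U$'s enter additively, and a similar expression in general. Its absolute value is the loop-gain factor $|1-(\partial f_2/\partial v_3)(\partial f_3/\partial v_2)|$ up to block-separable $\partial_u$ terms. The full coarea weight, summed over branches, is separable by hypothesis, so the joint density factorizes. Under the special case of a single branch whose gain is $\sigma(V_2,V_3)$-measurable, the Jacobian depends only on the conditioning values and cancels upon conditioning. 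The linear/additive case is an instance where the gain is constant.

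\textbf{Step 4: the counterexample.} For $V_2=V_0V_3+U_2$, $V_3=V_1V_2+U_3$, the density of $(V_0,V_1,V_2,V_3)$ is
\[
p(v_0)\,p(v_1)\,\varphi(v_2-v_0v_3)\,\varphi(v_3-v_1v_2)\,|1-v_0v_1|.
\]
The Gaussian factors are separable, but $|1-v_0v_1|$ is not a product $g(v_0)h(v_1)$ on any open set: its mixed log-derivative is nonzero off $v_0v_1=1$. So conditional independence fails. This requires checking a.s.\ unique solvability: $\det(I-B)=1-V_0V_1\neq0$ a.s., which gives a unique linear solution.

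The percentages in (ii) are reported computations rather than claims to prove; (ii) itself follows from Step 1 together with any faithfulness-violating instance. Part (iii) is a citation. I expect the main obstacle to be Step 3, namely justifying the coarea argument with multiple inverse branches and the measure-zero critical set. I would handle it by restricting to the complement of the critical set and applying the inverse function theorem branchwise.
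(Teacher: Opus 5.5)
Your proposal is correct and follows the paper's proof essentially step for step: the $\sigma$-open walk through the cycle, the indicator factorization of the finite-state joint law into a $(V_0;V_2,V_3)$-factor times a $(V_1;V_2,V_3)$-factor, the change of variables $(U_2,U_3)\mapsto(V_2,V_3)$ whose cycle Jacobian carries the loop-gain factor, and the non-separable factor $|1-V_0V_1|$ in the counterexample, where your nonzero mixed log-derivative $-1/(1-v_0v_1)^2$ is a cleaner justification than the paper's numerical correlation. One small slip in Step 1: on the walk $V_0\to V_2\to V_3\leftarrow V_1$ only $V_3$ is a collider, while $V_2$ is a conditioned non-collider that stays open because its walk edge $V_2\to V_3$ remains inside the SCC.
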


\begin{theorem}[Special-case recovery]\label{thm:special}
(a) If $\mathcal{G}^{\mathrm{eq}}$ is acyclic, ECG-separation coincides with $d$-separation. (b) The
logit-equilibrium process of MA-AIRL \citep{Yu2019maairl}, \eqref{eq:qre}, is an ECG with $\mathsf{Eq}=$
logit-QRE (existence by Brouwer; uniqueness for small $\lambda$ or a declared selection rule). (c) The coupled
behavior--disease endemic equilibrium $i^\star=1-\sqrt{\gamma/\beta_0}$ for $\beta_0>\gamma>0$ (myopic $b{=}i$) is a single-component
ECG. A finite exhaustive enumeration of conditioning triples on the four-node acyclic graph gives the same
conclusion as (a) for that graph; (b)--(c) are constructions.
\end{theorem}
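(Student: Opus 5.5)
The plan is to prove the three parts separately, since they share no machinery beyond the definitions of \Cref{def:ecg,def:ecgsep}.

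\textbf{Part (a).} If $\mathcal{G}^{\mathrm{eq}}$ is acyclic, every SCC is a singleton without a self-loop. I will then unwind the $\sigma$-separation criterion of \citet{Forre2018,Bongers2021}. In $\sigma$-separation, a walk is blocked at a non-collider $w$ in $\bm C$ only if some adjacent edge on the walk points out of $w$ to a node outside $\mathrm{sc}(w)$. When $\mathrm{sc}(w)=\{w\}$, this exception is vacuous. Any edge at $w$ leaves the SCC, so the $\sigma$-blocking rule for non-colliders reduces to the $d$-blocking rule. The collider rule (collider not in $\mathrm{an}(\bm C)$) is identical in both criteria.

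Walks versus paths is the one point needing care, because $\sigma$-separation is phrased over walks. I will use the standard fact that in a DAG, $d$-connection by a walk is equivalent to $d$-connection by a path. This follows by shortcutting repeated nodes: a repeated node can be excised, and a collider in $\mathrm{an}(\bm C)$ that gets removed can be replaced by a directed detour. Alternatively, I will cite that $\sigma$-separation reduces to $d$-separation for acyclic graphs \citep{Forre2018}. This gives the equivalence $\bm A\ecgsep\bm B\mid\bm C\iff \bm A\dosep\bm B\mid\bm C$.

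\textbf{Part (b).} This is a construction. I take $N$ players with finite $\mathcal A_i$ and $\mathsf{Eq}=$ logit-QRE. The endogenous variables are the mixed strategies $\pi_i$ on simplices, and the structural equations are given by \eqref{eq:qre}. The map $\bm\pi\mapsto(\text{softmax}_i)$ is continuous from the compact convex product of simplices into itself, so Brouwer gives a fixed point.

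For uniqueness with small $\lambda$, I bound the Lipschitz constant of the softmax composition. The softmax is $\tfrac12$-Lipschitz in $\ell_\infty\to\ell_1$-type norms, and the expected payoff is Lipschitz in $\bm\pi_{-i}$ with constant $\max|u_i|$. The composite map therefore has modulus at most $c\lambda\max_i\|u_i\|_\infty$, which is below $1$ for small $\lambda$. \Cref{thm:exist} then yields a jointly measurable, unique equilibrium map. For larger $\lambda$, I instead invoke a declared Borel selection with nonsingular Jacobian via \Cref{thm:regbranch} and \Cref{cor:games}(i). Identifying this map with MA-AIRL's data-generating process finishes (b).

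\textbf{Part (c).} I take endogenous $(b,i)$ with the myopic response $b=i$ and effective transmission $\beta_0(1-b)$. The SIS-type equilibrium condition $i=1-\gamma/(\beta_0(1-b))$ forms a two-cycle $b\leftrightarrows i$, i.e.\ a single cyclic component. Substituting $b=i$ gives $(1-i)^2=\gamma/\beta_0$, so $i^\star=1-\sqrt{\gamma/\beta_0}$ is the root in $(0,1)$ when $\beta_0>\gamma>0$; the other root exceeds $1$ and is excluded.

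Nonsingularity of the Jacobian at this root gives a regular branch by \Cref{thm:regbranch}. The cycle gain is $\partial i/\partial b\cdot\partial b/\partial i=-\gamma/(\beta_0(1-b)^2)=-1$ at the root, so $1-(-1)=2\neq0$.

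The main obstacle is the walk/path subtlety in (a). The finite enumeration cited in the statement is corroboration only, not part of the proof.
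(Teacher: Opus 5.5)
Your proposal is correct and follows the paper's proof part by part. In (a), singleton SCCs make the $\sigma$-exception vacuous; in (b), \eqref{eq:qre} is read as the structural equations; and in (c), the myopic substitution $b=i$ yields the interior root of $(1-i)^2=\gamma/\beta_0$. Your additions make explicit what the paper leaves implicit: the walk-versus-path shortcut, Brouwer plus a small-$\lambda$ contraction bound, and the regularity check $\det(I-B)=2$ via \Cref{thm:regbranch} (the right tool there, since the endemic Jacobian has $\rho(B)=1$).
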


\begin{proposition}[Post-intervention factorization]\label{thm:trunc}
If $\mathcal{M}(\mathcal{E})$ stays uniquely solvable w.r.t.\ each SCC after $\doop(\bm a_I)$, the
interventional law is the pushforward of $\Prob_{\U}$ through the post-surgery equilibrium map. Identifying
such quantities from observational data is separate: it needs the cyclic do-calculus of
\citet{ForreMooij2019calculus} and, in the sequential case, sequential exchangeability and positivity (we
demonstrate only a one-step adjustment with explicit two-sided positivity, not a longitudinal regime
$g$-formula).
\end{proposition}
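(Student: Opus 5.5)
The first claim follows by defining the post-intervention structural system explicitly and applying \Cref{thm:exist} (or the selected-branch analogue \Cref{thm:regbranch}) to it. For a mechanism intervention $\doop(\bm a_I)$ I replace $f_i$, $i\in I$, by the constant assignment $a_i$ (or by $\tilde f_i(\cdot,\U_i)$ for a soft or payoff edit), keep $f_j$ for $j\notin I$, and leave $\Prob_{\U}$ unchanged. The edited graph $\mathcal G_{\doop(\bm a_I)}$ removes all incoming edges to $I$. Its SCCs are recomputed as in \Cref{def:intervention}, and the intervened nodes become acyclic singletons with measurable constant assignments.

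By hypothesis the edited system is uniquely solvable with respect to each recomputed SCC. So for $\Prob_{\U}$-a.e.\ $\bm u$ each component subsystem has a unique measurable solution map $\bm g_S(\bm v_{\pa(S)},\bm u)$. Composing these maps along a topological order of the edited condensation DAG gives a jointly measurable global map $\bm g^{I}$. This map is the unique solution of the edited system on a $\Prob_{\U}$-full set, and the argument is the same composition step as in \Cref{thm:exist}.

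The interventional law is, by the definition of an intervened SCM in \citet{Bongers2021}, the law of the solution $\V$ of the edited system. Because that solution equals $\bm g^{I}(\U)$ almost surely, its law is $\Prob_{\U}\circ(\bm g^{I})^{-1}$. Uniqueness makes this independent of any choice of solution version, and any two measurable solution maps agree a.s., so the pushforward is well defined. Under unique solvability $\mathsf{Sel}$ is vacuous, so no selection issue arises.

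The main obstacle is the measurability of the composed map when the component maps exist only for a.e.\ $\bm u$. I would handle it by intersecting the finitely many full-measure Borel sets, one per component, into a single set $\mathcal U_0$ and defining $\bm g^I$ arbitrarily (constantly) off $\mathcal U_0$.

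The second sentence of the statement is a scope statement rather than a theorem, so I would justify it only by pointing to what is needed. The pushforward depends on the post-surgery $\bm g^{I}$, which is not in general a functional of the observational law. Identification therefore requires separate rules such as the cyclic do-calculus of \citet{ForreMooij2019calculus}. I would also check that the one-step adjustment used later rests on exchangeability and two-sided positivity, in the sense that both treatment arms have positive probability in each stratum. That check makes the adjustment ratio well defined, and nothing longitudinal is claimed.
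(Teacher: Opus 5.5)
Your proposal is correct and follows essentially the paper's route. The paper gives no separate appendix proof and treats the claim as immediate from the intervention algebra of \Cref{def:intervention}: re-solve the edited system, recompute its SCCs, and compose the component solution maps along the condensation DAG exactly as in the proof of \Cref{thm:exist}. It reads the second sentence as a scope remark, just as you do, and your only needed care point—invoking the composition step of \Cref{thm:exist} rather than the theorem itself, since only per-SCC unique solvability and not contraction is assumed—is already handled in your second paragraph.
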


\begin{proposition}[Acyclifying a game is a hazard]\label{prop:counter}
For two agents with $q_i=\theta_i-\beta q_j+\varepsilon_i$ ($\beta=\tfrac12$) and disturbances independent of
each other and of $(\theta_1,\theta_2)$, with common positive variance, a modeler who writes the
``intended'' acyclic graph $\theta_i\to q_i$ concludes $q_1\dosep q_2\mid\{\theta_1,\theta_2\}$, but in the
equilibrium distribution direct calculation gives
$\mathrm{pcorr}(q_1,q_2\mid\theta_1,\theta_2)=-0.80\neq0$: the back-door set is invalid. ECG-separation on the
true cyclic graph flags the dependence.
\end{proposition}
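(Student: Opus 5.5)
The plan is to solve the two-agent system explicitly as a linear cyclic SCM and read off the conditional covariance of $(q_1,q_2)$ given $(\theta_1,\theta_2)$. Write $\bm q=(q_1,q_2)^\top$, $\bm\theta=(\theta_1,\theta_2)^\top$, $\bm\varepsilon=(\varepsilon_1,\varepsilon_2)^\top$, and let $P=\bigl(\begin{smallmatrix}0&1\\1&0\end{smallmatrix}\bigr)$ be the swap matrix. The structural equations then read $\bm q=\bm\theta+\bm\varepsilon-\beta P\bm q$, which is the linear case of \Cref{thm:exist} with $B=-\beta P$.

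\textbf{Step 1: well-posedness.} Since $\rho(B)=\beta=\tfrac12<1$, \Cref{thm:exist} gives a unique measurable equilibrium map. Equivalently, $\det(I+\beta P)=1-\beta^2=\tfrac34\neq0$. The equilibrium is therefore $\bm q=A(\bm\theta+\bm\varepsilon)$ with
\[
A=(I+\beta P)^{-1}=\frac{1}{1-\beta^2}\begin{pmatrix}1&-\beta\\-\beta&1\end{pmatrix}.
\]

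\textbf{Step 2: partial covariance.} Because $\bm\varepsilon$ is uncorrelated with $\bm\theta$, the linear projection of $\bm q$ on $\bm\theta$ is $A\bm\theta$. The residual is therefore $A\bm\varepsilon$, with covariance $\sigma^2AA^\top=\sigma^2A^2$. A direct multiplication gives
\[
A^2=\frac{1}{(1-\beta^2)^2}\begin{pmatrix}1+\beta^2&-2\beta\\-2\beta&1+\beta^2\end{pmatrix}.
\]
The partial correlation is thus $-2\beta/(1+\beta^2)$, and at $\beta=\tfrac12$ this equals $-1/1.25=-0.80$.

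\textbf{Step 3: graphical comparison.} In the modeler's acyclic graph $\theta_1\to q_1$, $\theta_2\to q_2$, the nodes $q_1$ and $q_2$ lie in different connected components. They are therefore $d$-separated given any set, in particular given $\{\theta_1,\theta_2\}$. In the true equilibrium graph, by contrast, $q_1\leftrightarrows q_2$ is a cyclic equilibrium component containing a direct edge between the two nodes. No conditioning set can $\sigma$-block an adjacency, so $q_1\not\ecgsep q_2\mid\{\theta_1,\theta_2\}$, and ECG-separation flags the dependence. Because the partial correlation is nonzero, a linear back-door adjustment treating $\{\theta_1,\theta_2\}$ as sufficient is invalid. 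Under Gaussian disturbances the nonzero partial correlation is equivalent to conditional dependence; for general laws, the statement as written concerns the partial correlation only.

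\textbf{Main obstacle.} The computation itself is routine. The step that needs care is justifying that the residual covariance is exactly $\sigma^2A^2$, without assuming Gaussianity. This uses only that $\bm\varepsilon$ is independent of $\bm\theta$ and has covariance $\sigma^2 I$, so the projection argument holds at the level of second moments. The $-0.80$ figure is therefore a statement about the linear partial correlation, valid for any such noise law.
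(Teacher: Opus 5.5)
Your proof is correct and is essentially the paper's argument: the same resolvent calculation gives $\mathrm{pcorr}=-2\beta/(1+\beta^2)=-0.80$, and you read the $\sigma$-connection off the direct $q_1\leftrightarrows q_2$ adjacency, whereas the paper reads it off the bidirected edge of the acyclification. You also left one small strengthening on the table: since $\bm\varepsilon$ is independent of (not merely uncorrelated with) $\bm\theta$, $\mathrm{Cov}(\bm q\mid\bm\theta)=\sigma^2A^2$ almost surely, so $q_1\not\indep q_2\mid\theta_1,\theta_2$ holds for every finite-variance noise law, not only Gaussian ones.
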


\subsection{Linear trek separation}
\label{sec:trek}
We state trek separation as three linked layers, each explicitly scoped to the class where it holds, so the quantifiers
and the noise family are explicit; the proof is a single object (\Cref{app:proofs}).

\begin{theorem}[Linear ECG trek separation]\label{thm:trek}
Let $\mathcal{E}$ be a linear ECG, \emph{algebraically uniquely solvable} w.r.t.\ each SCC
($\det(I-B_S)\neq0$), reduced form $\V=A\U$, $A=(I-B)^{-1}$, mutually independent exogenous $\U$ with
$\mathrm{cov}(\U)=\Omega$ diagonal and $\Omega\succ0$. Write $\bm A_0\perp_t\bm B_0\mid\bm C$ for conditional
trek separation on the equilibrium graph: no system of $|\bm C|{+}1$ \emph{side-disjoint} treks connects $\bm A_0\cup\bm C$ to $\bm B_0\cup\bm C$
(the left \emph{sides}---source-to-top paths---pairwise vertex-disjoint \emph{and} the right sides pairwise vertex-disjoint; a left side may meet a right side). Equivalently,
all relevant $(|\bm C|{+}1)$-minors vanish \emph{identically}; equivalently, the generic rank of
$\Sigma_{\bm A_0\cup\bm C,\,\bm B_0\cup\bm C}$ is at most $|\bm C|$. Genericity below is relative to freely
varying coefficients on the fixed directed support and positive diagonal noise variances. Then:
\begin{itemize}[leftmargin=1.4em,itemsep=1pt,topsep=1pt]
\item[\textbf{(a)}] \textbf{Gaussian conditional independence.} If $\U\sim N(0,\Omega)$, then
$\V_{\bm A_0}\indep\V_{\bm B_0}\mid\V_{\bm C}$ holds \emph{generically} iff $\bm A_0\perp_t\bm B_0\mid\bm C$.
\item[\textbf{(b)}] \textbf{All-parameter (uniform) soundness.} If $\bm A_0\perp_t\bm B_0\mid\bm C$, the
partial covariance block $\Sigma_{\bm A_0\bm B_0\cdot\bm C}$ is \emph{identically zero} as a rational
function of the edge coefficients and noise variances. Hence for \emph{every} admissible parameter
($\det(I-B_S)\neq0$), Gaussian or not, the partial (linear-regression residual) cross-covariance vanishes---soundness holds uniformly,
not merely generically.
\item[\textbf{(c)}] \textbf{Generic completeness.} If $\bm A_0\not\perp_t\bm B_0\mid\bm C$, then at least one
numerator polynomial of $\Sigma_{\bm A_0\bm B_0\cdot\bm C}$ is not identically zero, so zero partial
covariance can hold only on a proper algebraic subvariety of parameter space.
\end{itemize}
The proof is self-contained (Jacobi complementary minors $+$ Coates/all-minors $+$ Cauchy--Binet, bypassing
the Lindstr\"om--Gessel--Viennot acyclicity requirement) and \emph{contraction-free}: it holds under
$\rho(B)<1$ by a directed-walk expansion of $A=I+B+B^2+\cdots$, and extends to \emph{all} algebraically
uniquely solvable systems by rational-identity continuation (each $A$ entry is $\operatorname{adj}(I-B)/
\det(I-B)$, so a partial-covariance minor vanishing on the open contraction region vanishes identically).
This covers systems with $\det(I-B)\neq0$ but $\rho(B)\ge1$, such as the grid component, and cleanly
separates \emph{convergence of iteration} from \emph{existence of the equilibrium law} (cf.\ \Cref{thm:exist}).
On the four-node completeness graph, direct symbolic calculation gives
$\Sigma_{\bm A_0\bm B_0\cdot\bm C}\equiv0$, and numerical evaluation at $\rho(B){=}1.2$,
$\det(I-B)\neq0$, gives the noncontractive instance. On a separate five-node graph formed by adding a common
source to $V_0,V_1$, a numerical point evaluation gives a nonzero completeness witness. Numerical rank matched
the maximum side-disjoint-trek-system size in $300/300$ sampled acyclic and $300/300$ sampled cyclic cases;
these finite calculations corroborate the preceding proof.
\end{theorem}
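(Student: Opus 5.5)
The argument combines three ingredients: the Schur-complement identity relating partial-covariance minors to minors of $\Sigma$, the Cauchy--Binet factorization $\Sigma=A\Omega A^\top$, and a path/trek description of the minors of $A=(I-B)^{-1}$ valid for cyclic graphs.

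\textbf{Step 1: reduction to a single rank condition.} Let $\bm C$ have $c$ elements. For a square $k\times k$ block with rows $R\subseteq\bm A_0$ and columns $S\subseteq\bm B_0$, Jacobi's complementary-minor identity gives
\[
\det\Sigma_{R\cup\bm C,\,S\cup\bm C}=\det\Sigma_{\bm C\bm C}\,\det\bigl(\Sigma_{\bm A_0\bm B_0\cdot\bm C}\bigr)_{R,S},
\]
and $\det\Sigma_{\bm C\bm C}>0$ because $\Omega\succ0$ and $A$ is invertible. Taking $k=1$, the entries of the partial covariance $\Sigma_{\bm A_0\bm B_0\cdot\bm C}$ are, up to this nonvanishing factor, exactly the $(c+1)$-minors of $\Sigma_{\bm A_0\cup\bm C,\bm B_0\cup\bm C}$ that contain all of $\bm C$.

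These are also all the minors that matter for rank. The rank of $\Sigma_{\bm A_0\cup\bm C,\bm B_0\cup\bm C}$ equals $c+\rank\Sigma_{\bm A_0\bm B_0\cdot\bm C}$, since the $\bm C$ rows and columns can be eliminated. So the generic rank is at most $c$ if and only if the partial covariance vanishes identically, which is the stated equivalence between the minor and rank formulations.

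\textbf{Step 2: trek expansion of the minors.} By Cauchy--Binet applied to $\Sigma=A\Omega A^\top$,
\[
\det\Sigma_{P,Q}=\sum_{|T|=c+1}\det A_{P,T}\,\Big(\prod_{t\in T}\omega_t\Big)\det A_{Q,T}.
\]
On the contraction region $\rho(B)<1$, write $A=\sum_k B^k$. Each entry $A_{ij}$ is then the weighted sum over directed walks from $j$ to $i$.

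For the cyclic case I would use the Coates/all-minors matrix-tree expansion instead of Lindstr\"om--Gessel--Viennot. It writes $\det(I-B)\det A_{P,T}$ as a signed sum over linkings: vertex-disjoint path systems from $T$ to $P$, together with vertex-disjoint cycle covers of part of the remainder.

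Suppose no side-disjoint trek system of size $c+1$ exists. Then for every $T$, either no disjoint path system links $T$ to $P$, or none links $T$ to $Q$. Each product $\det A_{P,T}\det A_{Q,T}$ therefore has a zero factor, and the whole minor vanishes.

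Conversely, I would pick a maximal side-disjoint trek system and put generic weights on it, making all other weights tiny. One factor, $\det A_{P,T}$ or $\det A_{Q,T}$, then has a surviving leading monomial. Because $\omega_t>0$ and the same $T$ appears on both sides, the $(\det A_{\cdot,T})^2$-type terms carry positive weight and cannot cancel across different $T$. This gives a nonzero polynomial.

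\textbf{Step 3: the three parts.}
\begin{itemize}
\item Part (b): the minor is a rational function, namely a polynomial in $\operatorname{adj}(I-B)$ and $\omega$ divided by a power of $\det(I-B)$. It vanishes on the open nonempty contraction region, so it vanishes identically, hence at every parameter with $\det(I-B)\neq0$. That yields zero partial covariance whatever the noise law, because partial covariance depends only on second moments.
\item Part (c): the numerator is a nonzero polynomial, so its zero set is a proper subvariety.
\item Part (a): for Gaussian $\U$, conditional independence is equivalent to zero partial covariance, so (a) follows by combining (b) and (c).
\end{itemize}

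\textbf{Main obstacle.} The hardest step is the completeness direction of Step 2 in the cyclic case. The cycle-cover terms of the Coates expansion can cancel, and walks may revisit vertices. I would handle this by specializing to weights that vanish off the chosen trek system and off the edges it needs. Under that specialization $\det(I-B)=1$, and $A$ restricted to the relevant vertices becomes acyclic-like, which reduces the argument to the known acyclic trek-separation theorem. I would then lift the nonvanishing back to the generic parameter by polynomial continuity.
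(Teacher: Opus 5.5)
Your overall route is the paper's: a Schur-complement reduction to the generic rank of $\Sigma_{\bm A_0\cup\bm C,\bm B_0\cup\bm C}$, Cauchy--Binet over top sets $T$, Jacobi plus the Coates expansion for soundness, and rational continuation beyond $\rho(B)<1$. Your soundness half is correct. The gap is in the completeness half, which you rightly identify as the crux; both justifications you give there fail.

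\textbf{Cancellation across top sets.} Positivity does not rule out cancellation between different $T$. A relevant minor has rows $P'\subseteq\bm A_0\cup\bm C$ and columns $Q'\subseteq\bm B_0\cup\bm C$, and in general $P'\neq Q'$. So $\det A_{P',T}\det A_{Q',T}$ is not a square and can take either sign. The correct reason, used in the paper's Step~2, is that the $\omega_t$ are free indeterminates. Each $T$ is therefore tagged by its own squarefree monomial $\prod_{t\in T}\omega_t$. Hence the minor is $\not\equiv0$ iff some single $T$ has \emph{both} factors $\not\equiv0$, since a product of nonzero rational functions is nonzero. Your text says ``one factor \ldots or'', but both are needed.

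\textbf{The joint specialization is not acyclic.} Specializing to the edges of a chosen side-disjoint trek system need not give an acyclic graph. Side-disjointness lets a left side and a right side pass through the same vertices in opposite directions. Take edges $t_1\to x\to y\to p_1$, $t_1\to q_1$, $t_2\to p_2$ and $t_2\to y\to x\to q_2$. The only side-disjoint $2$-trek system from $\{p_1,p_2\}$ to $\{q_1,q_2\}$ has left side $t_1\to x\to y\to p_1$ and right side $t_2\to y\to x\to q_2$. Its edge set contains the cycle $x\leftrightarrows y$. After your specialization, $\det(I-B)=1-B_{yx}B_{xy}\neq1$, so there is no acyclic theorem to reduce to.

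The fix is to specialize each factor separately. Keep only the left-side edges, which form a vertex-disjoint union of simple paths. Then $\det(I-B)=1$ and $A_{P',T}$ becomes a monomial matrix of path weights, so $\det A_{P',T}\not\equiv0$. Do the same with the right-side edges for $\det A_{Q',T}$.

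The paper avoids specialization entirely through its no-cancellation lemma. In the Coates expansion of $\det((I-B)_{T^c,P'^c})$, distinct bijections give distinct squarefree monomials in the free coefficients. So, contrary to your stated obstacle, cycle-cover terms cannot cancel, and a single admissible linkage already makes the minor nonzero.
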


\interpretation{Under the theorem's per-SCC algebraic-solvability, mutually independent-source,
positive-diagonal-covariance, and fixed-support genericity assumptions, side-disjoint trek separation
characterizes generic Gaussian conditional independence; for every admissible parameterization it implies zero
partial covariance, and its failure makes zero partial covariance non-generic. Contraction is not required.}

\begin{theorem}[Non-Gaussian scope: covariance completeness, not CI completeness]\label{thm:trekng}
For a linear ECG with mutually independent exogenous variables of finite variance, conditional trek
separation is sound and generically complete for \emph{zero partial covariance}
$\Sigma_{\bm A_0\bm B_0\cdot\bm C}=0$. If the exogenous variables are Gaussian (and
$\Sigma_{\bm C\bm C}$ is nonsingular), this Schur complement equals the a.e.-constant conditional covariance---the
order-two conditional cumulant---and its vanishing is conditional independence (\Cref{thm:trek}a). If they are
non-Gaussian, it is instead the complete \emph{second-order linear-regression} criterion: partial covariance need
not equal $\operatorname{Cov}(\V_{\bm A_0},\V_{\bm B_0}\mid\V_{\bm C}{=}c)$. Functional separation
(\Cref{thm:funcsep}) is an exact structural sufficient condition for full conditional independence; under the conditional
analyticity and moment-determinacy assumptions of \Cref{app:cumulant}, conditional independence is also equivalent to vanishing of
\emph{all} mixed conditional cumulants. No non-Gaussian graphical iff is claimed. Thus Gaussianity guarantees,
class-wide, that the trek theorem is the order-two conditional-cumulant member; outside the Gaussian subclass
that equality need not hold.
\end{theorem}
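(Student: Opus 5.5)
The statement bundles three claims, and I would prove them in order: (1) trek separation is sound and generically complete for zero partial covariance under any finite-variance independent noise law; (2) in the Gaussian case the Schur complement is the conditional covariance, so vanishing is conditional independence; (3) for non-Gaussian noise this equivalence can fail, so no graphical iff for conditional independence is claimed.

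\textbf{Step (1).} The key observation is that the proof of \Cref{thm:trek}(b)--(c) never uses Gaussianity. The covariance is $\Sigma=A\Omega A^\top$ with $A=(I-B)^{-1}$, and this depends only on the second moments of $\U$, not on its law. The partial covariance $\Sigma_{\bm A_0\bm B_0\cdot\bm C}=\Sigma_{\bm A_0\bm B_0}-\Sigma_{\bm A_0\bm C}\Sigma_{\bm C\bm C}^{-1}\Sigma_{\bm C\bm B_0}$ is the same rational function of $(B,\Omega)$ in both cases. By Jacobi's complementary-minor identity, its entries are ratios of $(|\bm C|+1)$-minors of $\Sigma_{\bm A_0\cup\bm C,\bm B_0\cup\bm C}$ to $\det\Sigma_{\bm C\bm C}$. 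Parts (b) and (c) of \Cref{thm:trek} then give identical vanishing under trek separation, and a non-identically-zero numerator otherwise. The parameter space is the same: free coefficients on the fixed support with $\det(I-B_S)\neq0$, and positive diagonal $\Omega$, now interpreted as variances of arbitrary independent laws. So soundness holds for every admissible parameter and completeness holds off a proper subvariety. I would also note that $\Sigma_{\bm C\bm C}\succ0$ whenever $\Omega\succ0$ and $A$ is invertible, so the Schur complement is always defined.

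\textbf{Step (2).} In the Gaussian case $(\V_{\bm A_0},\V_{\bm B_0},\V_{\bm C})$ is jointly Gaussian. The standard conditioning formula shows that the conditional covariance given $\V_{\bm C}=c$ equals the Schur complement for every $c$, so it is a.e.\ constant. For jointly Gaussian conditional laws, zero conditional cross-covariance is equivalent to conditional independence. This is exactly \Cref{thm:trek}(a), and it identifies the Schur complement with the order-two conditional cumulant.

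\textbf{Step (3).} I would give an explicit counterexample showing that partial covariance need not equal conditional covariance. Take $V_1=U_1$, $V_2=U_2$, $V_3=V_1+V_2+U_3$ with non-Gaussian $U$. Then $\{V_1\}\perp_t\{V_2\}\mid\emptyset$ fails with $\bm C=\{V_3\}$, since $V_1,V_2$ are d-connected through the collider. Consider instead a chain $V_1\to V_3\to V_2$ with $V_1=U_1$, $V_3=V_1+U_3$, $V_2=V_3+U_2$, conditioning on $V_3$. This case is trek separated, and here conditional independence does hold, so it is not a counterexample.

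The real counterexample should have the partial covariance vanish while $\operatorname{Cov}(\cdot\mid\V_{\bm C}=c)$ varies with $c$. The simplest choice is a collider $V_3=V_1+V_2$ with uniform $U_1,U_2$. Conditioning on $V_3=c$ gives a conditional covariance equal to $-\mathrm{Var}(V_1\mid c)$, which varies with $c$. This shows that the conditional covariance is not a.e.\ constant and hence differs from the Schur complement pointwise. That is the statement ``need not equal.''

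For the cumulant remark I would cite \Cref{app:cumulant} and \Cref{thm:funcsep} without reproving them.

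\textbf{Main obstacle.} The main difficulty is only careful bookkeeping: checking that every step of the \Cref{thm:trek} proof, including the rational continuation from the contraction region, uses nothing beyond $\Sigma=A\Omega A^\top$.
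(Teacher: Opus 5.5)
Your Steps (1) and (2) follow the paper's argument. The soundness and generic-completeness parts of \Cref{thm:trek} depend on the noise law only through $\Sigma=A\Omega A^\top$, so they transfer verbatim to any independent finite-variance noise. For jointly Gaussian $\V$, the Schur complement is the $c$-independent conditional covariance.

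The real difference is your witness in Step (3). Your uniform collider, with $U_i\sim\mathrm{Unif}[-a,a]$, gives $\operatorname{Cov}(V_1,V_2\mid V_3=c)=-(2a-|c|)^2/12$. This depends on $c$, whereas the Schur complement is constant, so it does prove the literal ``need not equal'' clause.

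It does not meet the goal you announced, that the partial covariance vanish while the conditional covariance varies. In your collider the partial covariance is $-\sigma^2/2\neq0$. Because $\E[V_1\mid V_3]=V_3/2$ is linear, that value is exactly the average of the conditional covariance. So the example shows only that the two quantities differ pointwise. It does not show that a zero second-order criterion can coexist with nonzero conditional covariance.

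The paper's witness in \Cref{app:cumulant} does show this. It takes independent Rademacher $U_1,U_2,U_3$ with $A=U_1+U_3$, $B=U_2+U_3$, $C=U_1+U_2+\tfrac12U_3$. The partial covariance is $0$, yet $\operatorname{Cov}(A,B\mid C=\tfrac12)=-1$. This is the point behind ``covariance completeness, not CI completeness'': a zero output of the trek/partial-covariance test need not even mean zero conditional covariance. Your example is simpler and has a continuous density; the paper's exhibits the zero-versus-nonzero gap.

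Two repairs are needed in Step (3):
\begin{enumerate}
\item As written, $V_3=V_1+V_2$ has no disturbance, which leaves the $\Omega\succ0$ class. Adding an independent uniform $U_3$ keeps the conclusion: the Schur complement becomes $-\sigma^2/3$, while the conditional covariance tends to $0$ as $|c|\to3a$.
\item Delete the garbled first collider attempt and the abandoned chain example.
\end{enumerate}
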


\begin{proposition}[Smooth ECG: a local differential criterion]\label{prop:jac}
Let $\V=g(\U)$ be a smooth ECG equilibrium map with $\U=u_0+\varepsilon Z$, $Z\sim N(0,\Omega)$, $\Omega\succ0$,
$g\in C^1$ with second moments dominated uniformly in $\varepsilon$, and $J$ the Jacobian of $g$ at $u_0$. Assume the full-rank condition
\textup{(FRC)}: the conditioning block $J_{\bm C}$ has full row rank. As $\varepsilon\to0$, the leading-order
partial (regression) cross-covariance is the Schur complement of the Jacobian covariance,
$\Sigma_{\bm A_0\bm B_0\cdot\bm C}=\varepsilon^2\,\mathrm{Schur}(J\Omega J^\top)+o(\varepsilon^2)$ (sharpening to
$O(\varepsilon^4)$ when $g\in C^3$ with finite sixth moments, the $\varepsilon^3$ Hessian cross-term being an
odd Gaussian moment). Thus the Jacobian Schur complement is the leading-order conditional-dependence
\emph{geometry} of the smooth ECG---a \emph{local}, second-moment criterion; a finite small-noise numerical
experiment corroborates the leading-order partial-covariance relation. It is \emph{not} a mutual-information expansion: conditional
MI is scale-invariant, so at a rank-deficient $J\Omega J^\top$ the covariance Schur complement is
$\Theta(\varepsilon^2)\!\to\!0$ while $I(\V_{\bm A_0};\V_{\bm B_0}\mid\V_{\bm C})$ stays bounded away from $0$
(e.g.\ $\Theta(1)$ when $\V_{\bm A_0},\V_{\bm B_0}$ share a second-order source direction); no
$I(\cdot)=I(J\cdot)+O(\varepsilon)$ identity holds in general. A sufficient condition for exact global CI is given by \Cref{thm:funcsep}.
\end{proposition}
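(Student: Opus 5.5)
The plan is a second-order Taylor expansion of $g$ around $u_0$, followed by a check that the Schur complement is continuous in its arguments. Write $\V=g(u_0)+\varepsilon JZ+R_\varepsilon(Z)$ with remainder $R_\varepsilon(Z)=\int_0^1[Dg(u_0+t\varepsilon Z)-J]\,\varepsilon Z\,dt$. Since $g\in C^1$, $R_\varepsilon(Z)/\varepsilon\to0$ pointwise in $Z$. The uniform second-moment domination then gives convergence in $L^2$ by dominated convergence, so $\E\|R_\varepsilon\|^2=o(\varepsilon^2)$. By Cauchy--Schwarz the full covariance satisfies
\[
\Sigma(\varepsilon)=\varepsilon^2 J\Omega J^\top+o(\varepsilon^2),
\]
because the cross terms $\operatorname{Cov}(\varepsilon JZ,R_\varepsilon)$ are bounded by $\varepsilon\cdot o(\varepsilon)$.

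Next I will pass to the partial covariance $\Sigma_{\bm A_0\bm B_0\cdot\bm C}=\Sigma_{AB}-\Sigma_{AC}\Sigma_{CC}^{-1}\Sigma_{CB}$. Divide every block by $\varepsilon^2$. Under (FRC) the matrix $J_{\bm C}\Omega J_{\bm C}^\top$ is positive definite, so $\varepsilon^{-2}\Sigma_{CC}$ is invertible for small $\varepsilon$. Matrix inversion is continuous, so the Schur complement map is continuous at the limit point. Hence $\varepsilon^{-2}\Sigma_{\bm A_0\bm B_0\cdot\bm C}\to\mathrm{Schur}(J\Omega J^\top)$, which is the stated $o(\varepsilon^2)$ claim. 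The Schur complement is homogeneous of degree one under scaling of the whole matrix, which justifies the factor $\varepsilon^2$. (FRC) is used exactly at this inversion step.

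For the $C^3$ sharpening I will expand to third order, $\V-g(u_0)=\varepsilon JZ+\tfrac{\varepsilon^2}{2}Q(Z)+O(\varepsilon^3\|Z\|^3)$, with $Q$ the quadratic Hessian term.
\begin{itemize}
\item The $\varepsilon^3$ contribution to the covariance is $\operatorname{Cov}(JZ,Q(Z))$, a third Gaussian moment, which vanishes.
\item The centering of $Q$ is harmless.
\item The remaining terms are $O(\varepsilon^4)$, using the finite sixth moments to control products of the cubic remainder.
\end{itemize}
The Schur-complement perturbation is Lipschitz near the limit, so the $O(\varepsilon^4)$ error in $\varepsilon^{-2}\Sigma$, namely $O(\varepsilon^2)$ after rescaling, propagates to $O(\varepsilon^4)$ in the partial covariance.

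The main obstacle is the rigor of the remainder bound, i.e. that the domination hypothesis really gives $L^2$ control of $R_\varepsilon/\varepsilon$. I would try to read the hypothesis as uniform integrability of $\|(\V-g(u_0))/\varepsilon\|^2$.

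For the negative MI remark I will give an explicit example. Take $J$ rank-deficient so that $J\Omega J^\top$ has a zero Schur complement, but let the nonlinear part make $V_A$ and $V_B$ share a source direction. For instance, with scalar $Z_1,Z_2$, set $V_C=\varepsilon Z_1$, $V_A=\varepsilon^2 Z_2^2$, $V_B=\varepsilon^2 Z_2^2$. The covariance Schur complement is $O(\varepsilon^4)$, so the leading $\varepsilon^2$ term vanishes. By scale invariance, however, $I(V_A;V_B\mid V_C)=I(Z_2^2;Z_2^2)$ is constant and positive, in fact infinite for identical continuous variables. Any finite-noise variant such as $V_B=\varepsilon^2(Z_2^2+Z_3)$ gives a $\Theta(1)$ value. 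This refutes any identity of the form $I=I(J\cdot)+O(\varepsilon)$.
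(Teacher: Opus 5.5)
Your proposal is correct and follows essentially the paper's own route. The shared steps are the first-order Taylor remainder with $\E\|R_\varepsilon/\varepsilon\|^2\to0$ from the domination hypothesis, invertibility of the $\bm C$-block under \textup{(FRC)} together with continuity and degree-one homogeneity of the Schur complement, the vanishing odd third Gaussian moment for the $C^3$ sharpening, and scale invariance of conditional MI with a shared second-order source for the negative remark, which your $V_A=V_B=\varepsilon^2Z_2^2$ example instantiates. One small slip: the finite-noise variant $V_B=\varepsilon^2(Z_2^2+Z_3)$ is not of the form $g(u_0+\varepsilon Z)$ for a fixed $g$, because $\varepsilon^2Z_3=\varepsilon(U_3-u_{0,3})$ would make $g$ depend on $\varepsilon$; use, e.g., $V_B=\varepsilon^2(Z_2^2+Z_3^2)$ instead.
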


The next theorem moves from the preceding local second-moment criterion to a sufficient condition for exact
nonlinear conditional independence under its stated functional assumptions.

\begin{theorem}[Functional separation: a sufficient condition for exact nonlinear conditional independence]\label{thm:funcsep}
Let $\V=g(\U)$ with $\U=(\U_R,\U_S,\U_T)$ in mutually independent blocks. Suppose there exist measurable
$h_A,h_B,h_C$ and a sufficient equilibrium coordinate $W$ such that $\V_{\bm C}=h_C(W)$,
$\V_{\bm A_0}=h_A(\U_R,W)$, $\V_{\bm B_0}=h_B(\U_S,W)$, with $\U_R\indep\U_S\mid W$, and there exists a
measurable $k$ with $k(\V_{\bm C})=W$ a.s. Then $\V_{\bm A_0}\indep\V_{\bm B_0}\mid\V_{\bm C}$. \textbf{Functional ECG separation:}
$\bm C$ functionally separates $\bm A_0,\bm B_0$ if, after solving the equilibrium equations, the exogenous
influences on $\bm A_0$ and on $\bm B_0$ outside $\bm C$ partition into blocks that are conditionally independent
given a shared coordinate $W$, with $\U_R\indep\U_S\mid W$ and measurable recoverability
$k(\V_{\bm C})=W$ a.s. This is a sufficient condition for \emph{exact} conditional independence
under every noise law (full CI, not merely uncorrelatedness); the linear Gaussian trek theorem is
its second-order shadow, and the Jacobian criterion (\Cref{prop:jac}) is its local linearization.
For the named Gaussian and Laplace draws, finite-sample distance correlations after cubic-basis residualization
are at the independence floor ($0.03$), compared with a dependent baseline ($0.27$--$0.91$), consistent with
the same conditional independence.
\end{theorem}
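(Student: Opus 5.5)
The plan is to reduce the claim to the elementary fact that functions of conditionally independent variables are conditionally independent, after replacing conditioning on $\V_{\bm C}$ by conditioning on $W$. Work throughout with regular conditional distributions on standard Borel spaces.

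\emph{Step 1: the conditioning $\sigma$-fields coincide.} Since $\V_{\bm C}=h_C(W)$, we have $\sigma(\V_{\bm C})\subseteq\sigma(W)$. Since $W=k(\V_{\bm C})$ a.s., we have $\sigma(W)\subseteq\sigma(\V_{\bm C})$ up to null sets. Hence $\sigma(\V_{\bm C})$ and $\sigma(W)$ agree modulo $\Prob$-null sets. Conditional expectations, and therefore conditional independence statements, given either $\sigma$-field are then a.s.\ identical. It suffices to prove $\V_{\bm A_0}\indep\V_{\bm B_0}\mid W$.

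\emph{Step 2: push the hypothesis forward.} Fix bounded Borel $\phi,\psi$ and set $\Phi(u,w)=\phi(h_A(u,w))$ and $\Psi(u,w)=\psi(h_B(u,w))$. These are bounded and jointly measurable. We need
\[
\E[\Phi(\U_R,W)\Psi(\U_S,W)\mid W]=\E[\Phi(\U_R,W)\mid W]\,\E[\Psi(\U_S,W)\mid W]\quad\text{a.s.}
\]
For product functions $\Phi(u,w)=a(u)b(w)$ and $\Psi(u,w)=c(u)e(w)$, this is immediate. Pull out the $W$-measurable factors $b(W)e(W)$ and use $\U_R\indep\U_S\mid W$ to factor $\E[a(\U_R)c(\U_S)\mid W]$.

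To pass to general jointly measurable $\Phi,\Psi$, I would use a monotone class ($\pi$-$\lambda$) argument on the product $\sigma$-field, first in $\Phi$ with $\Psi$ a fixed product function, then in $\Psi$. Alternatively, disintegrate: writing $\kappa_w$ for the regular conditional law of $(\U_R,\U_S)$ given $W=w$, the hypothesis says $\kappa_w=\kappa^R_w\otimes\kappa^S_w$ for a.e.\ $w$. The identity then follows by Fubini applied to $\int\Phi(r,w)\Psi(s,w)\,\kappa^R_w(dr)\kappa^S_w(ds)$, where $w$ is frozen.

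\emph{Step 3: conclude.} Step 2 gives $\V_{\bm A_0}\indep\V_{\bm B_0}\mid W$, and Step 1 transfers this to conditioning on $\V_{\bm C}$. The proof uses no property of the noise law beyond the stated conditional independence, which is why the conclusion is full conditional independence under every noise law.

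The ``functional ECG separation'' definition is just a restatement of these hypotheses, so it needs no separate proof. The remarks about the Gaussian trek shadow and the Jacobian linearization are interpretive and cite \Cref{thm:trek,prop:jac}. The numerical distance-correlation figures are corroboration, not part of the proof.

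\emph{Main obstacle.} The delicate point is Step 2: the maps $h_A,h_B$ depend on $W$ itself, so the functions being conditioned are not functions of $\U_R,\U_S$ alone. The "freezing $w$" argument must therefore be made rigorous. I would first try the disintegration route, which requires a regular conditional distribution of $(\U_R,\U_S,W)$ and joint measurability of $(r,w)\mapsto\phi(h_A(r,w))$. I would fall back on the monotone class argument if standard Borel assumptions on the source spaces are not available. A secondary point is making sure the null set in $k(\V_{\bm C})=W$ a.s.\ does not spoil the $\sigma$-field identification; completing the $\sigma$-fields handles this.
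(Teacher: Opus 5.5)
Your proposal is correct and follows the paper's proof. Like the paper, you identify $\sigma(\V_{\bm C})$ with $\sigma(W)$ modulo null sets, then argue that $h_A(\U_R,W)$ and $h_B(\U_S,W)$ are conditionally independent given $W$ because the regular conditional kernel factorizes into the two induced marginal kernels; your monotone-class or disintegration details just make rigorous the ``freezing $w$'' step that the paper states in one line.
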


\interpretation{Under the theorem's measurable decomposition and recoverability assumptions, conditioning on
$\V_{\bm C}$ recovers $W$ and leaves the two sides driven by conditionally independent source blocks. The
conclusion is full conditional independence for every noise law, not only vanishing partial covariance.}

\begin{theorem}[Private-source saturation: the counterexample as a positive result]\label{thm:pss}
Consider an ECG in which, after conditioning on $\bm C$, every $\sigma$-open walk from $\bm A_0$ to
$\bm B_0$ passes through equilibrium variables that are jointly sufficient for the private exogenous blocks on
the two sides (the shared influence is $\V_{\bm C}$-measurable and the two private blocks are
\emph{conditionally} independent given that sufficient coordinate $W$: $\U_R\indep\U_S\mid W$ as in
\Cref{thm:funcsep}, \emph{not} merely marginally---marginal independence suffices only when the ECG is
finite-state or the cycle loop-gain is $\V_{\bm C}$-measurable).
Then $\V_{\bm A_0}\indep\V_{\bm B_0}\mid\V_{\bm C}$ \emph{even when $\sigma$-separation fails}. This is
functional separation (\Cref{thm:funcsep}) read off the equilibrium equations, and it turns the
model-incompleteness example of \Cref{prop:complete}(i) into a positive result: conditioning on both cycle
variables $V_2,V_3$ imposes separate constraints on $(V_0,U_2)$ and $(V_1,U_3)$, whose independent private
noise keeps $V_0\indep V_1\mid V_2,V_3$ (exact, for any noise law); finite Gaussian and Laplace calculations
corroborate this instance. ECGs thus admit
\emph{equation-aware} separation establishing independences that graph-only $\sigma$-separation misses.
\end{theorem}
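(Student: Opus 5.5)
The plan is to reduce the statement to \Cref{thm:funcsep} and then check the hypotheses of that theorem on the running example of \Cref{prop:complete}(i).

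\textbf{Step 1 (general claim).} The hypothesis of \Cref{thm:pss} gives exactly the data \Cref{thm:funcsep} needs. There is a sufficient equilibrium coordinate $W$ with $W=k(\V_{\bm C})$ a.s. Every $\sigma$-open walk passes through variables determined by $W$. The private blocks satisfy $\U_R\indep\U_S\mid W$. I would make this precise by solving the equilibrium map: I would write $\V_{\bm A_0}=h_A(\U_R,W)$ and $\V_{\bm B_0}=h_B(\U_S,W)$, which is possible because the exogenous influences outside $\bm C$ split into the two private blocks. I would then condition on $\V_{\bm C}$, equivalently on $W$ by recoverability. Conditioned on $W$, each side is a measurable function of its own block, and $\U_R\indep\U_S\mid W$ yields the conditional independence. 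The only care needed is that conditioning on $\V_{\bm C}$ equals conditioning on $W$ together with the redundant information $h_C(W)$. Since $\sigma(\V_{\bm C})=\sigma(W)$ a.s., this holds.

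\textbf{Step 2 (marginal-vs-conditional remark).} I would justify the parenthetical claim that marginal independence suffices in the finite-state case and in the $\V_{\bm C}$-measurable loop-gain case by appealing to \Cref{prop:complete}(i). In both settings, the conditional density of the private blocks given $(V_2,V_3)$ factorizes: the complete coarea weight (or the finite indicator of the fixed-point constraint) is multiplicatively $(V_0;V_1)$-separable. Marginal independence of $(U_0,U_2)$ and $(U_1,U_3)$ therefore transfers to conditional independence given $W=(V_2,V_3)$.

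\textbf{Step 3 (instance).} On $V_0\to V_2$, $V_1\to V_3$, $V_2\leftrightarrows V_3$, I would take $W=(V_2,V_3)=\V_{\bm C}$ with $k$ the identity. Fixing $(V_2,V_3)=(v_2,v_3)$ turns the equations $v_2=f_2(V_0,v_3,U_2)$ and $v_3=f_3(V_1,v_2,U_3)$ into separate constraints, one on $(V_0,U_2)$ and one on $(V_1,U_3)$. I would write the joint density of $(V_0,U_2,V_1,U_3,V_2,V_3)$ by change of variables. It is the product of the block densities times the cycle Jacobian $|1-\partial_3 f_2\,\partial_2 f_3|^{-1}$ (summed over branches). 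When that factor depends only on $(v_2,v_3)$, which is the case in the linear/additive class, the conditional density factorizes into a $(V_0,U_2)$ factor times a $(V_1,U_3)$ factor, and $V_0\indep V_1\mid V_2,V_3$ follows.

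\textbf{Main obstacle.} The hard step is establishing $\U_R\indep\U_S\mid W$ from the structural assumptions, because conditioning on a collider-like equilibrium coordinate can couple the private blocks. The $V_0V_3$ example of \Cref{prop:complete} shows this coupling is real. For this reason I would keep conditional independence as an explicit hypothesis and verify it only in the scoped cases, through the Jacobian-separability argument of Step 3. The numerical Gaussian/Laplace corroborations need no proof.
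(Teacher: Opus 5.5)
Your Step~1 is the paper's proof of the general claim. It reads the theorem as \Cref{thm:funcsep} applied to the equilibrium equations, uses $\sigma(\V_{\bm C})=\sigma(W)$ a.s., and uses the fact that measurable functions of conditionally independent inputs are conditionally independent. For the instance, the paper writes the linear cycle $V_2=aV_0+dV_3+U_2$, $V_3=bV_1+eV_2+U_3$. It takes $W=(V_2,V_3)$, $\U_R=(U_0,U_2)$, $\U_S=(U_1,U_3)$, and applies \Cref{thm:funcsep} after noting only that the two blocks are independent. You are right that the real content lies here. \Cref{thm:funcsep} needs $\U_R\indep\U_S\mid W$, and conditioning on the cycle coordinates could a priori couple the blocks.

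The gap is in how you verify that step. Your Step~3 uses a change of variables with a $W$-measurable cycle Jacobian, and Step~2 adds the finite-state indicator factorization. The Jacobian argument needs the private blocks to have joint densities and an a.e.\ invertible, coarea-regular inverse. So it does not deliver the statement's ``exact, for any noise law'': singular, mixed, or degenerate noise laws are not covered.

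The missing idea is a density-free $\sigma$-algebra argument. In the linear instance, set $\xi_R:=aU_0+U_2$ and $\xi_S:=bU_1+U_3$, so that $V_2-dV_3=\xi_R$ and $V_3-eV_2=\xi_S$. Unique solvability ($1-de\neq0$) makes $(V_2,V_3)\mapsto(\xi_R,\xi_S)$ a linear bijection. Hence $\sigma(W)=\sigma(\xi_R,\xi_S)$, with $\xi_R$ a function of $\U_R$ alone and $\xi_S$ a function of $\U_S$ alone. Since $\U_R\indep\U_S$, the elementary fact $(X_1,Y_1)\indep(X_2,Y_2)\Rightarrow X_1\indep X_2\mid(Y_1,Y_2)$ gives $\U_R\indep\U_S\mid W$ for every noise law. \Cref{thm:funcsep} then finishes the argument with $V_0=U_0$ and $V_1=U_1$.

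Two minor slips in Step~3 do not affect the factorization logic but should be fixed. First, the six-vector $(V_0,U_2,V_1,U_3,V_2,V_3)$ has no Lebesgue density, since it lives on a four-dimensional set; you mean the density of $(V_0,V_1,V_2,V_3)$. Second, that density carries the factor $|1-\mathfrak L|$, together with the side factors $1/|f_{2,U_2}|$ and $1/|f_{3,U_3}|$, not $|1-\mathfrak L|^{-1}$.
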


\begin{remark}[A three-tier separation story]
$\sigma$-separation is sound graph-only reasoning; trek separation is generically complete for the linear Gaussian family
(\Cref{thm:trek}) and generically complete for second-order structure under any independent noise (\Cref{thm:trekng});
functional / private-source separation is an equation-aware sufficient condition for exact conditional independence in the nonlinear and
non-Gaussian case (\Cref{thm:funcsep,thm:pss}). Each is sound in its stated class, and pointwise or generic
completeness is claimed only where it is proved.
\end{remark}

\subsection{Observed-layer identification}
\label{sec:id}
\begin{theorem}[ECG adjustment with intervention indicators]\label{thm:backdoor}
Augment the ECG with a regime indicator $I_X$ and edge $I_X\!\to\!X$: $I_X{=}0$ uses the observational
mechanism for $X$, $I_X{=}1$ replaces it by $X{=}x$. Let the externally supplied assignment satisfy
$I_X=h_I(\bm Z,E_I)$, where $E_I$ is a primitive assignment source independent of every structural exogenous
source (unconditional randomization is the special case with no $\bm Z$ argument), and include every
$\bm Z\!\to\!I_X$ assignment edge in the augmented graph. Suppose represented exogeneity, the solvability
premise of \Cref{thm:sound}, post-intervention unique solvability and selection stability, no direct regime
effect except the declared edit of $X$, and that
$\bm Z$ is not affected by $I_X$ (no directed $I_X\!\rightsquigarrow\!\bm Z$). If in the augmented equilibrium graph
\[
I_X\ \perp_{\sigma}\ Y \ \mid\ X,\bm Z .
\]
then, with consistency and observational-regime positivity
$\Prob_0(X{=}x\mid\bm Z{=}\bm z)>0$ for $\Prob_0$-a.e.\ target $\bm z$, writing $\Prob_0$ for the $I_X=0$ law
and taking
$\Prob_0(\bm Z)$ as the target-population baseline,
$\Prob\big(Y\mid\doop(X{=}x)\big)=\sum_{\bm z}\Prob_0(Y\mid X{=}x,\bm z)\,\Prob_0(\bm z)$ (an integral for
continuous $\bm Z$). The left side is the result of re-solving the post-surgery fixed point---correct \emph{even when
$Y$ lies in a cycle}. The regime-indicator form makes exchangeability a graphical statement about the
\emph{mechanism} and sidesteps the ambiguity, in a cyclic graph, of deleting $X$'s incoming vs.\ outgoing
edges. On a confounded linear ECG with a $Y\!\leftrightarrows\!M$ cycle, direct calculation gives
$I_X\perp_\sigma Y\mid\{X,Z\}$ (and this separation fails given $Z$ alone), $Z$ is unaffected, and adjustment
recovers the true $b/(1-de)=1.43$ (the naive value $2.14$ is biased).
\end{theorem}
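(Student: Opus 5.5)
We prove the adjustment formula in four steps: put both regimes in one augmented SCM, use the sound Markov property of \Cref{thm:sound} to turn the $\sigma$-separation premise into a conditional independence, read the interventional law off the $I_X{=}1$ stratum, and then standardize. We finish by checking the linear example.

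\textbf{Step 1 (augmented model).} Extend the induced SCM $\mathcal M(\mathcal E)$ by adding the node $I_X$, the assignment $I_X=h_I(\bm Z,E_I)$, and the switched mechanism
\[
X=(1-I_X)\,f_X(\V_{\pa(X)},U_X)+I_X\,x .
\]
Add the edges $I_X\to X$ and $\bm Z\to I_X$. $E_I$ is a primitive source independent of the structural exogenous sources and enters only one equation, so represented exogeneity is preserved. $I_X$ is a function of $\bm Z$ and $E_I$ alone and is fed back into nothing except $X$, so it forms an acyclic singleton component. Every other SCC is an SCC of either the observational or the post-surgery graph. The solvability premise of \Cref{thm:sound} and post-intervention unique solvability then give unique solvability of the augmented model with respect to every strongly connected subset in each regime. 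Hence \Cref{thm:sound} applies to the augmented graph.

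\textbf{Step 2 (regime identification).} On $\{I_X=1\}$, the augmented equilibrium map coincides with the post-surgery equilibrium map. By selection stability and \Cref{thm:trunc}, this map pushes the exogenous law forward to the interventional law. Because $\bm Z$ is not a descendant of $I_X$, the component maps for $\bm Z$ do not involve $I_X$. Hence $\bm Z$ takes the same value in both regimes, and its law is the baseline $\Prob_0(\bm Z)$; this is also the sense of the consistency assumption. We also need that, conditionally on $\bm Z$, the event $I_X=1$ carries no extra information about the structural exogenous sources. This holds because $E_I$ is independent of them and $\bm Z$ does not depend on $I_X$. It yields
\[
\Prob(Y\mid \doop(X{=}x),\bm z)=\Prob(Y\mid I_X{=}1,\bm z).
\]
Making this last step rigorous is the main obstacle. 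If $\bm Z$ lies downstream of structural sources that also reach $Y$, the conditioning must be justified through the joint fixed point rather than a recursive factorization. I would argue pathwise, using that the post-surgery solution map is a measurable function of $(\bm u,x)$, that $\bm Z=g_{\bm Z}(\bm u)$ in both regimes, and that $E_I\indep\bm U$.

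\textbf{Step 3 (separation to adjustment).} By \Cref{thm:sound}, the premise $I_X\perp_\sigma Y\mid X,\bm Z$ gives $Y\indep I_X\mid X,\bm Z$. Therefore
\[
\Prob(Y\mid I_X{=}1,X{=}x,\bm z)=\Prob(Y\mid I_X{=}0,X{=}x,\bm z).
\]
On $\{I_X=1\}$ we have $X=x$ deterministically, so the left-hand side is $\Prob(Y\mid I_X{=}1,\bm z)$. The right-hand side is well defined because of the positivity assumption $\Prob_0(X{=}x\mid\bm z)>0$ for a.e.\ $\bm z$. Combining with Step 2 and integrating against $\Prob_0(\bm z)$ gives the stated formula; for continuous $\bm Z$ we use regular conditional distributions.

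\textbf{Step 4 (example).} We treat the linear example as a direct computation. First, verify $I_X\perp_\sigma Y\mid\{X,Z\}$ by listing the walks from $I_X$. The route $I_X\to X\leftarrow Z\to\cdots Y$ is blocked at $Z$, and the collider at $X$ is not an issue because the path is already blocked at the non-collider $Z$. Conditioning on $Z$ alone leaves the walk through $X$ into the $Y\leftrightarrows M$ cycle open. Finally, solve the cycle $Y=bX+dM+\dots$, $M=eY+\dots$, which gives a total effect of $X$ on $Y$ equal to $b/(1-de)$, and compare with the naive regression coefficient.
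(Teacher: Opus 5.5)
Your proposal is correct and follows essentially the same route as the paper. You augment with the regime indicator, apply \Cref{thm:sound} to the augmented graph to turn $I_X\perp_\sigma Y\mid X,\bm Z$ into $Y\indep I_X\mid X,\bm Z$, use consistency and observational positivity to replace the $I_X{=}1$ outcome kernel by $\Prob_0(Y\mid X{=}x,\bm z)$, and standardize over $\Prob_0(\bm Z)$.

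Your pathwise Step~2 (the same $\bm Z=g_{\bm Z}(\bm u)$ in both regimes, together with $E_I\indep\U$) makes explicit what the paper compresses into ``consistency'' and ``applying the edit to the baseline population.'' One point both arguments leave tacit: conditioning on each regime requires $0<\Prob(I_X{=}1\mid\bm z)<1$, which the statement does not list. This can always be arranged by running the argument with an auxiliary randomized assignment on the same augmented graph, because the $\bm z$-conditional kernels you use do not depend on $h_I$.
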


\begin{proposition}[Equilibrium effect resolvent]\label{prop:resolvent}
For a linear ECG with treatment inputs, $\V=B\V+\Gamma X+\U$ and equilibrium $\V=(I-B)^{-1}(\Gamma X+\U)$,
the total equilibrium effect of $X$ on an outcome $Y$ is
\[
\tau_{X\to Y}\;=\;e_Y^\top (I-B)^{-1}\Gamma .
\]
If $Y$ lies in a feedback block, this is the direct effect times the block's feedback resolvent; for the
$Y\!\leftrightarrows\!M$ cycle above it equals the cycle-amplified $b/(1-de)$ by direct symbolic calculation
($=1.4286$ at $b{=}1,d{=}0.5,e{=}0.6$). ECGs thus produce a concrete estimand distinct from the
acyclic total effect: the resolvent $(I-B)^{-1}$ replaces the acyclic path sum.
\end{proposition}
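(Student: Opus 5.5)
The plan is to solve the linear structural system directly and then differentiate the equilibrium map with respect to the treatment. Write the system as $\V=B\V+\Gamma X+\U$, which is equivalent to $(I-B)\V=\Gamma X+\U$. Algebraic unique solvability, $\det(I-B)\neq0$, is the linear case of \Cref{thm:exist}; it need not come from contraction, as \Cref{thm:regbranch} notes. Under it the unique equilibrium is $\V=(I-B)^{-1}(\Gamma X+\U)$. The total equilibrium effect of $X$ on $Y$ is defined through the intervention algebra of \Cref{def:intervention}: set $X=x$, re-solve the fixed point, and compare outcomes at two values $x,x'$. The intervention acts only on the input $X$ and does not edit $B$, so the post-intervention map is the same resolvent applied to $\Gamma x+\U$. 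Then $Y(x)-Y(x')=e_Y^\top(I-B)^{-1}\Gamma(x-x')$ holds pathwise for every $\U$. Hence $\tau_{X\to Y}=e_Y^\top(I-B)^{-1}\Gamma$, both as a derivative of the equilibrium map (\Cref{thm:regbranch} with $D_vF=I-B$ and $D_pF=-\Gamma$) and as a difference in interventional means.

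For the feedback-block statement, order the nodes by the condensation DAG, so that $I-B$ is block lower triangular. Block inversion gives the $Y$-row of $(I-B)^{-1}$ as the diagonal block resolvent $(I-B_S)^{-1}$ of $Y$'s SCC $S$, multiplied by the input into $S$ from upstream components. For the specific $Y\leftrightarrows M$ cycle of \Cref{thm:backdoor}, I would take $X\to Y$ with coefficient $b$ and $Y\to M$, $M\to Y$ with coefficients $d,e$ in whichever orientation the example fixes; only the product $de$ enters. The $2\times2$ block is then $I-B_S=\begin{pmatrix}1&-e\\-d&1\end{pmatrix}$, with determinant $1-de$. Its $(Y,Y)$ entry of the inverse is $1/(1-de)$, which gives $\tau=b/(1-de)$. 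At $b=1,d=0.5,e=0.6$ this is $1/0.7\approx1.4286$.

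To show the estimand differs from the acyclic total effect, compare it with the acyclic path sum $e_Y^\top\Gamma$, that is, the direct effect $b$ obtained once the feedback is cut. These differ whenever $de\neq0$. When $\rho(B)<1$ the Neumann expansion $I+B+B^2+\cdots$ exhibits the resolvent as a sum over directed walks, including cycle traversals, rather than simple paths.

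The main point requiring care is not the algebra but the interpretation. The intervention on $X$ must leave $B$ and $\U$ untouched, which is the no-direct-regime-effect condition, and the effect must be defined by re-solving the equilibrium rather than by acyclic path tracing. I would state this explicitly using \Cref{def:intervention}.
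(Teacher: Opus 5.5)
Your proposal is correct and follows essentially the paper's own argument: differentiate the explicit equilibrium $(I-B)^{-1}(\Gamma X+\U)$ with respect to $X$, then read $b/(1-de)$ off the $(Y,Y)$ entry of the $2\times2$ resolvent with determinant $1-de$ (the pathwise-difference and block-triangular framing is a somewhat more careful presentation of the same computation). One small slip: $b/(1-de)$ differs from the cut-feedback value $b$ only when $b\neq0$ as well as $de\neq0$.
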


\begin{theorem}[Identification of game interventions]\label{thm:interv-id}
If $(B,\Omega)$ of a linear ECG is identified---which for cyclic graphs generically requires interventional or
multi-environment data; the half-trek criterion \citep{FoygelDraismaDrton2012} gives a graphical
\emph{sufficient} condition (with a companion necessary condition, a known gap between them) for when the
observational law suffices---consider a declared deterministic edit with identified
$(B_{\mathcal I},\mu_{\mathcal I},\Omega_{\mathcal I})$ and
$\det(I-B_{\mathcal I})\neq0$. Writing $A_{\mathcal I}=(I-B_{\mathcal I})^{-1}$, the post-intervention mean
$A_{\mathcal I}\mu_{\mathcal I}$, covariance
$A_{\mathcal I}\Omega_{\mathcal I}A_{\mathcal I}^{\top}$, and every specified linear mean-response
$A_{\mathcal I}\Gamma_{\mathcal I}$ with known forcing direction $\Gamma_{\mathcal I}$ are point-identified.
If the complete edited source vector is jointly Gaussian, these moments identify its complete marginal
post-intervention law. For arbitrary sources, the complete marginal law is point-identified only when the
complete labelled joint edited-source law $\Prob_{\U_{\mathcal I}}$ is itself identified (directly or through a
fully specified intervention kernel); it is then pushed forward by $\bm u\mapsto A_{\mathcal I}\bm u$. No
cross-world joint law is claimed without an identified coupling. The
$\det(I-B_{\mathcal I})\neq0$ precondition is \emph{not} automatic (an isolated unit-gain reciprocal edit makes
it singular), so it must be stated. \emph{Excluded} (not declared edits of the foregoing objects):
player addition, selection under equilibrium multiplicity, and payoff \emph{curvature} edits whose
$(B,\Omega)$-image depends on the own-curvature scale, which $\Sigma$ pins only up to a global factor
(\Cref{thm:game}). \emph{Conversely}, identification is \emph{query-relative}. Let $\Theta$ be a declared smooth
quotient stratum obtained after imposing its support, sign, and active-constraint restrictions and quotienting
the declared recovery equivalence; let
$\Phi_{\mathcal I}:\Theta\to\mathcal D$ be the $C^1$ identifying map and $Q:\Theta\to\mathcal Q$ a $C^1$ query.
About a regular point, shrink to a constant-rank coordinate neighbourhood $U$ whose local fibre plaques are
connected, and define
\[
 u_Q(\theta;\mathcal I)=\dim\ker D\Phi_{\mathcal I}(\theta)
 -\dim\bigl(\ker D\Phi_{\mathcal I}(\theta)\cap\ker DQ(\theta)\bigr).
\]
Then there is a $C^1$ map $q$ on $\Phi_{\mathcal I}(U)$ with
$Q|_U=q\circ\Phi_{\mathcal I}|_U$ iff $u_Q(\theta;\mathcal I)=0$ for \emph{every} $\theta\in U$.
This is a local theorem only. At a
truth $\theta_0$, global point identification holds iff $Q$ is constant on the complete admissible fibre
$\Phi_{\mathcal I}^{-1}(\Phi_{\mathcal I}(\theta_0))$; all disconnected, finite, and other discrete components
must be compared. Thus zero-dimensional or finite fibre is not singleton fibre, and a pointwise kernel test at
one regular, singular, or boundary point is not an identification theorem (\S\ref{par:queryrel}). A known
support pattern can tighten $\Theta$ and identify with fewer targets than its unrestricted counterpart. Hence
no blanket converse: back-door- or instrument-covered effects stay identified even
when $B$ is not (\Cref{thm:soundid}); a generic effect trapped in a full-rotational \emph{Gaussian} source
component is not---though non-Gaussian (LiNG) sources collapse that hedge passively (\Cref{prop:nongauss}), so
$\Phi_{\mathcal I}$ and $u_Q$ must then be recomputed on the ICA-fixed mixings. Given $B$, the population
intervention formulas agree exactly with direct re-solving, and finite simulation corroborates this equality;
two ECGs with identical $\Sigma$ (to $10^{-10}$) give a $\doop(X_2)\!\to\!X_1$ effect of
$0.50$ vs.\ $0.20$ (a hedge query with positive local ambiguity on its regular stratum).
\end{theorem}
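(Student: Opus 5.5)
The theorem has three parts, and I will prove them in order: the forward identification claims, the local query-relative criterion, and then the global and negative statements. For the forward part the plan is purely algebraic. Take the identified edit data $(B_{\mathcal I},\mu_{\mathcal I},\Omega_{\mathcal I})$ with $\det(I-B_{\mathcal I})\neq0$. By \Cref{thm:exist} (algebraic branch) and \Cref{thm:trunc}, the post-intervention equilibrium is the pushforward of the edited source law through $\bm u\mapsto A_{\mathcal I}\bm u$. Linearity of expectation then gives the mean $A_{\mathcal I}\mu_{\mathcal I}$ and the covariance $A_{\mathcal I}\Omega_{\mathcal I}A_{\mathcal I}^\top$. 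Differentiating the equilibrium $\V=A_{\mathcal I}(\Gamma_{\mathcal I}X+\U)$ in $X$ gives the mean response $A_{\mathcal I}\Gamma_{\mathcal I}$, as in \Cref{prop:resolvent}. All three are continuous functions of identified quantities, so they are point-identified.

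For the law-level claims I will argue as follows. In the Gaussian case, first and second moments determine the law, and linear images of Gaussians are Gaussian. For arbitrary sources, the pushforward of an identified law under an identified map is identified. For the necessity direction ("only when"), I will exhibit two source laws with equal first and second moments, for example a Gaussian and a symmetric non-Gaussian law, under the same $A_{\mathcal I}$. Invertibility of $A_{\mathcal I}$ then gives distinct pushforwards. To show the precondition is not automatic, the reciprocal edit $b_{12}=b_{21}=1$ gives $\det(I-B)=0$.

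The local criterion is the core of the argument. On the constant-rank neighbourhood $U$, the constant rank theorem gives coordinates in which $\Phi_{\mathcal I}$ is a projection $(x,y)\mapsto x$, and its fibres are the connected plaques $\{x\}\times Y$.

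For sufficiency: suppose $u_Q\equiv0$ on $U$. Then $\ker D\Phi\subseteq\ker DQ$ everywhere, so $\partial_y Q\equiv0$ in these coordinates. Since the plaques are connected, $Q$ is constant on each plaque, hence $Q=q(x)$. Here $q(x)=Q(x,y_0)$ is $C^1$, and by the constant-rank theorem $\Phi(U)$ is locally a $C^1$ submanifold chart in which $x$ is a coordinate.

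For necessity: suppose $Q=q\circ\Phi$ on $U$. The chain rule gives $DQ=Dq\,D\Phi$, so $\ker D\Phi\subseteq\ker DQ$ and hence $u_Q=0$ pointwise. Some care is needed to define $C^1$ on the image $\Phi(U)$. I will take it to mean $C^1$ in the submanifold chart, or extendable to a $C^1$ map near the image; shrinking $U$ if necessary makes these agree.

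The global statements follow from the definition. $Q$ is globally point-identified at $\theta_0$ iff $Q(\theta)=Q(\theta_0)$ for every $\theta$ with $\Phi(\theta)=\Phi(\theta_0)$, which is exactly constancy on the full fibre. To see that a finite fibre can fail this, the two-point example with matching $\Sigma$ and effects $0.50$ versus $0.20$ serves as a witness. The "no blanket converse" remarks follow by citing \Cref{thm:soundid}, \Cref{thm:hedge} and \Cref{prop:nongauss}. A support restriction shrinks $\Theta$ and therefore can only shrink fibres.

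I expect the main obstacle to be the local equivalence, specifically making the constant-rank quotient-stratum setup precise enough that "connected plaques" and "$C^1$ on $\Phi(U)$" are both well defined. The pointwise kernel condition alone does not suffice, because it fails without constant rank and connectedness. I will handle this by building both hypotheses into the choice of $U$, exactly as the statement does, and by stating explicitly that nothing is claimed at singular or boundary points.
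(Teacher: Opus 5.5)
Your forward part and local criterion follow the paper's proof essentially step for step. Both use linearity for the mean, the covariance and $A_{\mathcal I}\Gamma_{\mathcal I}$; moment determinacy of Gaussians; and the pushforward under $\bm u\mapsto A_{\mathcal I}\bm u$. Your appeal to invertibility of $A_{\mathcal I}$ is in fact the cleanest proof of the ``only when'' clause, since $\Prob_{\U_{\mathcal I}}=(A_{\mathcal I}^{-1})_\#\mathcal L(\V_{\mathcal I})$. The local criterion likewise uses constant-rank coordinates $\Phi_{\mathcal I}(s,t)=(s,0)$ with $\partial_tQ=0$ integrated along connected plaques, the chain rule for the converse, and the global clause by definition.

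The gap is in the witnesses for the negative clauses of the statement.

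\begin{itemize}
\item \textbf{Finite fibres.} The $0.50$ vs.\ $0.20$ pair cannot show that a finite fibre may fail query constancy. The statement itself calls it a hedge query with positive local ambiguity: two points of a positive-dimensional rotational fibre along which $u_Q>0$. It therefore says nothing about zero-dimensional fibres. You need a genuinely finite fibre whose points $Q$ separates. One is the paper's $\Phi(x)=x^2$, $Q(x)=x$ over the value $1$. Another, absent stability selection, is the reciprocal pair $\{b,1/b\}$ of \Cref{thm:game} at $m=2$ with the $\doop$-effect query, which takes values $b$ versus $1/b$.
\item \textbf{One-point kernel test.} Your explanation of why a one-point kernel test fails is off and comes with no counterexample. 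The failure is not about constant rank or connectedness. The paper's $\Phi(x,y)=x$, $Q(x,y)=y^2$ has constant rank and connected fibres everywhere and passes the inclusion $\ker D\Phi\subseteq\ker DQ$ at $(0,0)$. Yet $Q$ varies along $\{0\}\times\R$. What is load-bearing is that $u_Q=0$ hold at every $\theta\in U$, which your sufficiency argument uses but your diagnosis misattributes.
\item \textbf{Known support.} ``A support restriction can only shrink fibres'' shows only that known support cannot hurt. The claimed strict saving in targets needs a witness. The known directed $3$-cycle of \Cref{prop:knownsupport} is identified from one target, whereas a $3$-ring in the unknown-support class needs $K=3$ by \Cref{thm:linear}.
\end{itemize}
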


\begin{theorem}[Sound identification criterion]\label{thm:soundid}
An interventional query is point-identified from the observational law if \emph{either} (a) some $\bm Z$ meets
all represented-exogeneity, regime-assignment, solvability, consistency, and positivity conditions of the ECG
back-door theorem, \emph{or} (b) in the linear model, at the observed law, valid half-trek rational
identification maps recover every structural coefficient and error-covariance entry used by the declared
deterministic well-posed query (all required denominators are nonzero), and any additional disturbance-law
feature is identified by the declared observational model. A graph satisfying the half-trek criterion supplies
the structural premise in (b) generically; it does not assert pointwise recovery at exceptional observed laws
where a required rational denominator vanishes.
In the corresponding examples, back-door adjustment recovers a cycle-amplified effect exactly, while numerical
evaluation with an instrument that makes the relevant coefficient half-trek-identifiable recovers it to
$3\times10^{-4}$ where, without it, the query is
non-identified.
\end{theorem}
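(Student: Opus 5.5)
The two branches are handled separately, and in each the query is written as a functional of objects that the observational law determines.

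\emph{Branch (a).} I invoke \Cref{thm:backdoor} directly. Its hypotheses are exactly those listed in (a): represented exogeneity, the regime-assignment structure with $E_I$ independent of the structural sources, the solvability premise of \Cref{thm:sound}, post-intervention unique solvability and selection stability, consistency, and observational-regime positivity, together with $I_X\perp_\sigma Y\mid X,\bm Z$ and no directed path $I_X\rightsquigarrow\bm Z$. That theorem then expresses $\Prob(Y\mid\doop(X{=}x))$ as $\int\Prob_0(Y\mid X{=}x,\bm z)\,\Prob_0(d\bm z)$. Positivity makes the conditional law $\Prob_0(Y\mid X{=}x,\bm z)$ well defined for $\Prob_0$-a.e.\ $\bm z$. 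Both factors are functionals of the observational ($I_X=0$) law, so any two models sharing that law give the same interventional law. This is point identification.

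\emph{Branch (b).} Fix the observed law and its covariance $\Sigma$. By hypothesis, each structural coefficient and error-covariance entry that the declared query uses is a rational function of $\Sigma$ along a valid half-trek map, and every required denominator is nonzero at this $\Sigma$. I would verify that at each step the half-trek system (the linear system in the unknown coefficients of node $i$) has a nonsingular matrix at this $\Sigma$, so its solution is unique. The unique solution must then equal the true parameter, because the true parameter satisfies the covariance equations $\Sigma=(I-B)^{-1}\Omega(I-B)^{-\top}$ and nonsingularity leaves no other solution. This pointwise argument is the load-bearing step. Any two parameters in the admissible class with the same observed law therefore agree on every coefficient and variance the query needs. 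The query is declared deterministic and well posed, with $\det(I-B_{\mathcal I})\neq0$. By \Cref{thm:interv-id}, the post-intervention mean, covariance and linear responses $A_{\mathcal I}\Gamma_{\mathcal I}$ are functions of these identified entries. Any further disturbance-law feature the query uses is identified by assumption, and pushing it forward through $A_{\mathcal I}$ identifies the corresponding part of the law.

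For the generic statement, the half-trek criterion of \citet{FoygelDraismaDrton2012} makes each step's matrix determinant a polynomial that is not identically zero on the support. Its zero set is a proper subvariety, so outside it the premise of (b) holds. At exceptional laws where a denominator vanishes, the argument simply does not apply. I expect the main obstacle to be the cyclic setting: \citet{FoygelDraismaDrton2012} state the criterion for mixed graphs allowing cycles, but I must confirm that nonzero denominators give uniqueness using only algebraic solvability, not $\rho(B)<1$. I would handle this as in \Cref{thm:trek}, using the rational-identity continuation, since the half-trek equations are polynomial in $(B,\Omega,\Sigma)$.

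The numerical examples (exact back-door recovery of the cycle-amplified effect, and instrument recovery to $3\times10^{-4}$) are corroborations only, not part of the proof.
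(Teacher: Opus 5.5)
Your proposal is correct and follows the paper's proof. Route (a) is a direct appeal to \Cref{thm:backdoor}. Route (b) is half-trek recovery of the used coefficients at the observed law, followed by deterministic well-posed substitution via \Cref{thm:interv-id}, with the nonzero-denominator premise excluding exceptional laws; your induction showing that nonsingular half-trek systems force all data-equivalent parameters on the same support to agree just spells out what the paper takes as the ``recover'' premise. The cyclic obstacle you anticipate does not arise: the half-trek equations come from $(I-B)\Sigma(I-B)^\top=\Omega$, which holds at every parameter with $\det(I-B)\neq0$, so neither rational-identity continuation nor contraction is needed.
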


\begin{theorem}[Equilibrium hedge --- a non-identification witness]\label{thm:hedge}
Let $S$ be a \emph{source} equilibrium component (no edge from outside $S$ into $S$) whose only structural
constraint is the zero diagonal (e.g.\ a complete digraph of size $m\ge2$), so its observational gauge is the
full orthogonal orbit. Then for distinct $X,Y\in S$ the effect $\Prob(Y\mid\doop(X))$ is \emph{generically
non-identified from the second-moment (Gaussian) observational law}: for a.e.\ such ECG with Gaussian sources
there are two \emph{stable} ECGs with identical $\Sigma$ but different effect (LiNG sources instead collapse the
source-frame hedge to its monomial ambiguity, \Cref{prop:nongauss}). Acyclic SCMs have singleton SCCs and no such gauge, so the obstruction is intrinsically cyclic---the
equilibrium analogue of the Shpitser--Pearl hedge \citep{ShpitserPearl2006}. A stable numerical witness
($m{=}3$) with $\Sigma$ identical to $10^{-15}$ gives $\doop(V_0)\!\to\!V_1$ effects $0.218$ vs.\ $-1.352$;
across random complete source SCCs ($m{=}3,4$), $0/12$ effects are gauge-invariant.
\end{theorem}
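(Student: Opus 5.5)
We would prove \Cref{thm:hedge} by exhibiting an explicit orthogonal gauge orbit on the source component and then showing that the effect functional is nonconstant on that orbit. Because $S$ is a source component, $\V_S=(I-B_S)^{-1}\U_S$ does not depend on anything outside $S$. We may therefore work on $S$ alone. After rescaling each coordinate of $\U_S$ to unit variance, which we absorb into a diagonal $D$, Gaussian sources give $\Sigma_S=A A^\top$ with $A=(I-B_S)^{-1}D$. For any $Q\in\Ortho(m)$, the matrix $A Q$ yields the same $\Sigma_S$. The task is to turn $AQ$ back into a legal ECG, that is, a pair $(B'_S,D')$ with zero diagonal and $D'$ positive diagonal such that $(I-B'_S)^{-1}D'=AQ$.

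Inverting gives $I-B'_S=D'Q^\top A^{-1}$. Zero diagonal of $B'_S$ means the diagonal of $D'Q^\top A^{-1}$ must equal $1$. This fixes $D'_{ii}=1/(Q^\top A^{-1})_{ii}$, which is legal whenever those diagonal entries are nonzero, with signs absorbed by the sign gauge of each source. The map is smooth in $Q$. At $Q=I$ it returns $(B_S,D)$, so on a neighbourhood of $I$ in $\SO(m)$ every $Q$ produces an admissible model. This is a nontrivial fibre of dimension $m(m-1)/2\ge1$. Stability ($\rho(B'_S)<1$, or at least $\det(I-B'_S)\neq0$) is an open condition, so it persists near $Q=I$ when the original model is stable. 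The outside graph is unaffected: downstream components see $\V_S$ only through its law, which is unchanged.

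Next we compute the effect. For distinct $X=V_i,Y=V_j\in S$, the surgery $\doop(V_i=x)$ in the linear model gives a mean effect of $Y$ equal to $A'_{ji}/A'_{ii}$, where $A'=(I-B'_S)^{-1}$. This follows from the resolvent formula of \Cref{prop:resolvent} applied to the cut system: the effect is the ratio $[(I-B)^{-1}]_{ji}/[(I-B)^{-1}]_{ii}$. In the Gaussian case the interventional law is determined by this mean together with the remaining covariance, so it suffices to show that the mean coefficient $\tau(Q)=(AQD'^{-1})_{ji}/(AQD'^{-1})_{ii}$ is nonconstant along the orbit. The $D'^{-1}$ factor cancels within a column, so $\tau(Q)=(AQ)_{ji}/(AQ)_{ii}$ up to the dependence of $D'$ on $Q$, and here that dependence drops out. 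Thus $\tau$ is the ratio of two entries of column $i$ of $AQ$. As $Q$ ranges over $\SO(m)$, column $i$ of $AQ$ ranges over $A$ applied to the unit sphere, and $A$ is invertible. The ratio of the $j$th to the $i$th coordinate therefore takes a continuum of values. In particular $\tau$ is nonconstant on every neighbourhood of $I$.

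The genericity clause follows because $\tau(Q)$ is real-analytic in $(Q,B_S,D)$. Nonconstancy on the identity neighbourhood holds at every invertible $A$, so the failure set is contained in a proper analytic subset: the points where some $D'$ denominator or $\det(I-B_S)$ vanishes. We also need the fact that acyclic SCMs lack this gauge: with singleton SCCs and zero diagonal the fibre is trivial. This is immediate from triangularity, since the analogous $Q$ would have to preserve a triangular Cholesky factor and so must be diagonal $\pm1$.

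\textbf{The main obstacle} is the step that keeps the perturbed model \emph{admissible and stable} while moving the effect by a visible amount. The zero-diagonal normalization, positivity of $D'$, and $\rho(B'_S)<1$ all have to hold simultaneously. We expect the local neighbourhood argument to suffice. To make the statement clean, we would first restrict $Q$ to a one-parameter rotation in the $(i,j)$-plane. Then we differentiate $\tau$ at $Q=I$ to show $\frac{d}{dt}\tau\neq0$ generically, which is an explicit nonzero polynomial condition in the entries of $A$.
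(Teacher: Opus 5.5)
Your proposal is correct and follows the paper's skeleton. Both use the covariance gauge $A\mapsto AQ$, the row-normalisation $\Omega'^{1/2}_{ii}=1/((AQ)^{-1})_{ii}$ that restores the zero diagonal, and non-constancy of the effect along the resulting orbit.

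The difference is in the last step. The paper argues that the effect is a rational function on the orbit that is non-constant for a.e.\ $B$. It relies on an explicit numerical witness for both the nonemptiness of that Zariski-open set and the stability of the two competing models. You instead write the effect explicitly as $\tau(Q)=(AQ)_{ji}/(AQ)_{ii}=f(Qe_i)$, with $f(q)=(a_j^\top q)/(a_i^\top q)$ and $a_k^\top$ the rows of $A$. You then work in a neighbourhood of $Q=I$. There $((AQ)^{-1})_{ii}=\Omega_{ii}^{-1/2}>0$ at $Q=I$, so positivity of the normaliser, $\rho(B')<1$, and well-posedness all persist by openness. This gives an analytic proof of both non-constancy and stability of the alternative, rather than a witness-backed genericity claim.

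Two refinements are needed.

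\begin{itemize}
\item \textbf{Well-posedness belongs in the exceptional set.} The effect is defined only when $\doop(X)$ is well posed, i.e.\ $A_{ii}\neq0$ (equivalently $P_{ii}\neq0$). This can fail for stable $B$ once $m\ge3$. That condition, not the $D'$ denominators (which are automatically positive near $I$), is the exceptional set. Your claim that non-constancy holds ``at every invertible $A$'' needs this qualification.
\item \textbf{Use all rotation planes.} Your $(i,j)$-plane derivative $\det A_{\{i,j\},\{i,j\}}/A_{ii}^2$ is only generically nonzero. Using all planes $(i,k)$ gives the full gradient $(A_{ii}a_j-A_{ji}a_i)/A_{ii}^2\in e_i^\perp$. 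This cannot vanish, because the rows of an invertible $A$ are linearly independent. So your local argument in fact proves the conclusion for every stable truth with $P_{ii}\neq0$, which is sharper than the ``a.e.'' in the statement.
\end{itemize}

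A minor point on logic: to conclude that the interventional laws differ, you only need the mean coefficients to differ. That Gaussian laws are determined by mean and covariance is not the direction you need.
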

\begin{remark}[A two-sided bracketing of identifiability]
\Cref{thm:soundid,thm:hedge} \emph{bracket} the identifiability of equilibrium effects: \emph{identified} when
a back-door set or sufficient instruments exist, \emph{non-identified} when distinct $X,Y$ are trapped in a
full-rotational-gauge source component. The in-between (sparse, partially-instrumented components) and the
exact iff remain open; soundness on both sides guarantees safety.
\end{remark}

\subsection{The role of the game layer}
\label{sec:game}
A recurring worry about typed cyclic-SCM frameworks is that the type (here, the game) is decorative---every
theorem would hold for a generic cyclic SCM. The next result gives one concrete thing the game structure buys
that a generic cyclic SCM cannot: a \emph{refutable} restriction on the (identified) structure, i.e.\ a
falsifiable specification test. It \emph{also} tightens identification, but---we are careful to flag---by a
generic codimension mechanism rather than a game-unique one.

\begin{theorem}[The game layer is testable and gauge-reducing]\label{thm:game}
Let $S$ be a linear equilibrium component that is the interior equilibrium of a \emph{weighted potential game}
with strict own-action curvature $H_{ii}:=\partial^2u_i/\partial a_i^2<0$ for every $i$. Its payoffs have
weighted-symmetric cross-curvatures with $w_i>0$:
\[
w_i\,\frac{\partial^2 u_i}{\partial a_i\partial a_j}
=w_j\,\frac{\partial^2 u_j}{\partial a_j\partial a_i}.
\]
Its
structural Jacobian $B_S$ (with $B_{ij}=-(\partial^2u_i/\partial a_i^2)^{-1}\partial^2u_i/\partial
a_i\partial a_j$) is \emph{$D$-symmetrizable}: there is a positive diagonal $D$ with $DB_S$ symmetric,
equivalently, componentwise on each fixed reciprocal nonzero support, $B_S$ is sign-symmetric
($B_{ij}B_{ji}>0$ on every supported pair) and obeys the \emph{Kolmogorov cycle condition}
$\prod_{\mathrm{cyc}}B_{ij}=\prod_{\mathrm{cyc}}B_{ji}$.
\textbf{(i) Testable over-identifying restriction.} Fix the undirected reciprocal support
$G=(V,E)$ and its sign stratum, and let $c(G)$ be its number of connected components. Exactly
$\beta_1(G)=|E|-|V|+c(G)$ independent cycle-basis equalities are required; all other cycle equalities follow.
Thus the equality restriction is vacuous exactly when $G$ is a forest, and has codimension $\beta_1(G)$ within
the fixed-support/sign stratum. Complete reciprocal support gives
$\beta_1=(m{-}1)(m{-}2)/2$. Hence, whenever $\beta_1(G)>0$, a generic cyclic SCM on that support violates the
restriction and the identified $B$ refutes the potential-game hypothesis. Triangle-free is not enough: a
reciprocal four-cycle has $\beta_1=1$ and a nonvacuous Kolmogorov equality.
At $m{=}2$ the cycle (Kolmogorov) equality is vacuous (codimension $0$); refutability there rests only on the
sign-symmetry condition $B_{12}B_{21}>0$ (a codimension-$0$ open restriction, still falsifiable).
\textbf{(ii) Identification (by a generic codimension mechanism).} The restriction also constrains effects the
equilibrium hedge (\Cref{thm:hedge}) leaves non-identified: for the \emph{symmetric-$B_S$ subclass}---weighted
own-curvatures $w_iH_{ii}$ constant across the (connected) component, so $D\propto I$; for an exact potential
this is equal own-curvatures $H_{ii}$---the intra-component effect is identified up to a \emph{generically finite} fibre from $\Sigma$ (\emph{a theorem
for all $m$}, not a dimension count; see the proof)---for
$m{=}2$ and nonzero cross-covariance (equivalently, nonzero off-diagonal precision), exactly the reciprocal pair
$\{b,1/b\}$ (both admissible, $\Omega'\!\succ\!0$; coupling $b\bar b{=}1$), which a stability selection
$\rho(B_S)<1$ reduces to a point \emph{when the data-generating equilibrium is itself contraction-stable}---an
assumption, not a consequence: it excludes admissible $\rho(B_S)\ge1$ equilibria (e.g.\ the undamped Cournot of
Exp.~5 at $N\ge4$, where the reciprocal partner is the stable one and the selection would mis-identify). When
$m{=}2$ and the cross-covariance is zero, positivity gives the singleton $b=0$. For $m\ge3$ on complete nonzero reciprocal support, the general-weighted-potential
restriction meets the hedge transversely on a generic regular set and shrinks it to dimension $m{-}1$---an
all-$m$ theorem proved below by an explicit nonzero cycle-Jacobian minor, not by dimension counting. On sparse
support the permitted gauge tangent must also preserve the missing edges, and no $m{-}1$ formula is asserted
without checking the resulting support-restricted cycle differential. On the symmetric-$B_S$ subclass the
fibre is \emph{generically finite for every $m$} (proved below via a dominant degree-$3$ pivot map), with genericity \emph{necessary}---the
fibre is infinite at $B{=}0,\Omega{=}I$ for $m\ge5$. Uniqueness-under-stability is a theorem at $m{=}2$ and a
conjecture for $m\ge3$. This identification gain is \emph{not
game-specific}: on complete support it follows from the generic fact that a restriction of rank
$(m{-}1)(m{-}2)/2$ \emph{transverse to the gauge orbit} cuts it, and \emph{any} restriction with the same
transverse differential rank (e.g.\ a
known coefficient ratio) achieves it equally. The game's distinctive contribution is part (i): it \emph{supplies and falsifies} such a restriction
from the payoffs.
Direct symbolic calculation gives $DB-(DB)^\top=0$ under the weighted-potential condition; the symmetrizability test
separates potential-game from generic $B$ (residual $\le8\times10^{-16}$ vs.\ median $1.0$ generic,
$0.34$--$0.75$ for sign-symmetric-but-non-Kolmogorov $B$); the tested nonzero-cross-covariance $m{=}2$ finite
fibre $=$ the reciprocal pair $\{0.40,2.50\}$, unique under $\rho{<}1$ (stable slice recovers
the true $0.40$, symmetric-gauge width $1.7\times10^{-16}$), vs.\ the generic hedge's $[-1.0,1.1]$; a \emph{non-game}
known-ratio constraint of equal codimension point-IDs equally, width $2.5\times10^{-16}$); the all-$m$
transversality proof is below, while the finite checks corroborate its ranks ($m{=}3$: $3\to2$;
$m{=}4$: $6\to3$).
\end{theorem}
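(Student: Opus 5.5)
The plan has four stages. First, derive the structural form $B_S$ and its $D$-symmetrizability from the first-order conditions. Second, prove part (i) by cycle-space linear algebra on logarithms of the edge ratios. Third, prove part (ii) by a tangent-space computation on the Gaussian hedge orbit. Fourth, show that any equally transverse restriction gives the same gain.

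\textbf{Structural form.} The interior equilibrium satisfies $\partial u_i/\partial a_i=0$ with quadratic payoffs. Solving for $a_i$ gives
\[
a_i=-H_{ii}^{-1}\sum_{j\ne i}\bigl(\partial^2u_i/\partial a_i\partial a_j\bigr)a_j+(\text{exogenous}),
\]
which is the stated $B_{ij}$. Multiply by $D=\diag(-w_iH_{ii})$; this matrix is positive because $H_{ii}<0$ and $w_i>0$. Then
\[
(DB)_{ij}=w_i\,\partial^2u_i/\partial a_i\partial a_j ,
\]
which is symmetric by the weighted-potential hypothesis.

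\textbf{Part (i).} I would first prove the equivalence of $D$-symmetrizability with sign symmetry plus the Kolmogorov condition on a fixed reciprocal support. $DB$ symmetric means $d_iB_{ij}=d_jB_{ji}$, so $\log d_j-\log d_i=\log(B_{ij}/B_{ji})$ on each edge. This is solvable for $\log d$ iff the edge $1$-form $\omega_{ij}=\log(B_{ij}/B_{ji})$ is exact. On a graph, exactness holds iff $\omega$ integrates to zero around every cycle; positivity of the ratios is exactly the sign-symmetry condition.

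The cycle space has dimension $\beta_1(G)=|E|-|V|+c(G)$. Fix a fundamental cycle basis from a spanning forest; every other cycle is a $\mathbb Z$-combination of these, so its equality follows. The basis equalities are independent because the map $(\omega_e)\mapsto$ (cycle sums) is surjective onto $\R^{\beta_1}$: each fundamental cycle owns a private non-forest edge. Since $\omega$ is a submersion onto the edge ratios, this gives codimension exactly $\beta_1$ in the support/sign stratum, and hence generic violation whenever $\beta_1>0$. The special cases are arithmetic: $K_m$ gives $m(m-1)/2-m+1=(m-1)(m-2)/2$, the four-cycle gives $1$, and $m=2$ gives $0$.

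\textbf{Part (ii), $m=2$.} Solve $\Sigma=A\Omega A^\top$ with $B$ symmetric off-diagonal $b$ and $\Omega$ diagonal. Then $\Sigma^{-1}=(I-B)\Omega^{-1}(I-B)$. Its off-diagonal entry is $-b(\omega_1^{-1}+\omega_2^{-1})$ and its diagonal entries are $\omega_1^{-1}+b^2\omega_2^{-1}$ and $b^2\omega_1^{-1}+\omega_2^{-1}$. Eliminating the $\omega$'s gives a quadratic in $b$ whose roots multiply to $1$, so the fibre is $\{b,1/b\}$. Stability selects $|b|<1$, and zero cross-covariance forces $b=0$.

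\textbf{Part (ii), $m\ge3$, general weights.} The Gaussian gauge acts on $I-B$ by $(I-B)\mapsto \Omega'^{1/2}Q\,\Omega^{-1/2}(I-B)$ with zero-diagonal constraints, which gives a manifold through the truth. I would compute the differential of the $\beta_1$ fundamental-triangle Kolmogorov functions $\log B_{1i}+\log B_{ij}+\log B_{j1}-(\text{reverse})$ restricted to the gauge tangent, parametrized by skew generators. I would then exhibit one explicit point, say a chosen symmetric $B$ with distinct weights, where a $\beta_1\times\beta_1$ minor is nonzero. Nonvanishing at one point of an irreducible analytic family gives it on a generic open dense set, so the orbit dimension drops from $\dim$ (the hedge) to $m-1$ by the constant-rank theorem.

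\textbf{Part (ii), symmetric subclass.} For finiteness I would show that the map from symmetric $(B,\Omega)$ to $\Sigma$ is dominant between spaces of equal dimension $m(m-1)/2+m=m(m+1)/2$. I would do this by a triangular "pivot" elimination of degree $3$ polynomials and check that the Jacobian is nonsingular at one point. Generic fibres of a dominant map between equidimensional varieties are finite. The failure at $B=0,\Omega=I$ for $m\ge5$ comes from an explicit positive-dimensional family of orthogonal gauges preserving symmetry. I would exhibit it by a dimension count of symmetric orthogonal-conjugation solutions, which exceeds $0$ once $m\ge5$.

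\textbf{Non-game specificity and main obstacle.} Non-game-specificity is immediate: only the rank of the restriction's differential on the gauge tangent entered. The main obstacle is the all-$m$ explicit minor. I would try to pick the evaluation point as a block or circulant structure so that the cycle-Jacobian decouples into per-triangle $1\times1$ blocks indexed by the non-tree edges.
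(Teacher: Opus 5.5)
Your route is the paper's at every stage. It uses the same $D=\diag(-w_iH_{ii})$, the same exactness/incidence-rank argument for $\beta_1(G)$, and the same Vieta quadratic $sb^2+(t_1+t_2)b+s=0$ at $m=2$. It also uses the same restricted cycle-Jacobian minor for general weights, and the same dominant degree-$3$ pivot $\Psi(C,\Lambda)=(I-C)\Lambda(I-C)$, nonsingular at $(0,I)$, for the symmetric subclass. The problems are in the steps you leave as plans, and one would not work as described.

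\textbf{The $m\ge5$ infinite fibre.} A dimension count cannot show that genericity is necessary. On all of $\Ortho(m)$, symmetry of $B'(Q)$ is $\binom m2$ equations in $\binom m2$ unknowns, so the expected dimension is $0$; that is why the generic fibre is finite. A positive-dimensional fibre at $B=0$, $\Omega=I$ is therefore an excess phenomenon and must be exhibited on a named subfamily where the equations collapse.
\begin{itemize}
\item The paper uses involutions $Q=I-2P$, with $P$ a rank-$2$ orthogonal projection satisfying $\diag P=\tfrac2m\mathbf1$. Such $P$ exist for $m\ge3$, e.g.\ $P=UU^\top$ with the $k$-th row of $U$ equal to $\sqrt{2/m}\,(\cos\tfrac{2\pi k}{m},\sin\tfrac{2\pi k}{m})$.
\item Then $\diag Q=(1-\tfrac4m)\mathbf1$, and $B'=(1-d)I+2dP$ with $d=m/(m-4)$ is symmetric and zero-diagonal.
\item The diagonal constraint has constant rank $m-1$ on $G(2,m)$, leaving an $(m-3)$-dimensional family.
\item The threshold $m\ge5$ comes from needing the normalizing diagonal $1-4/m$ to be positive. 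It does not come from the dimension, which is already positive at $m=4$.
\end{itemize}

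\textbf{The all-$m$ minor.} Your decoupling instinct is right, and no distinct weights are needed. The paper evaluates at the symmetric, stable point $B=b(J-I)$, $\Omega=I$, $0<b<1/(m-1)$.
\begin{itemize}
\item With $Q_\varepsilon=I+\varepsilon K$ and $r=K\mathbf1$, one gets $\dot B=(1+b)(K+b\,\diag(r)-br\mathbf1^\top)$.
\item The $r$-terms telescope around the fundamental triangles through vertex $1$, giving $\dot h_{ij}=\tfrac{2(1+b)}{b}(K_{1i}+K_{ij}+K_{j1})$.
\item Hence the Jacobian in the chord variables $K_{ij}$, $2\le i<j\le m$, is $\tfrac{2(1+b)}{b}$ times the identity.
\end{itemize}

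\textbf{Immersivity, which you omit.} Rank $\binom{m-1}2$ gives dimension $m-1$ in $\SO(m)$, not yet in $(B,\Omega)$-space. For the latter, the orbit chart $Q\mapsto(B'(Q),\Omega'(Q))$ must be immersive at the witness. There, $\dot B=0$ forces $K_{ij}=br_i$ for $i\neq j$. Skew-symmetry then gives $r_j=-r_i$ for all $i\neq j$, hence $r=0$ and $K=0$ when $m\ge3$.

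\textbf{Admissibility at $m=2$.} You should check that the partner root is admissible. With $\Omega=\diag(\omega_1,\omega_2)$, the root $1/b$ comes with $\Omega'=b^{-2}\diag(\omega_2,\omega_1)\succ0$.
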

\begin{remark}[What the game buys, precisely]
The game-essential content is part (i): a generic cyclic SCM cannot state a refutable
symmetrizability/Kolmogorov restriction, whereas a potential game's symmetric Hessian forces one (the
symmetrizable form is the best-response transcription of \citealp{MondererShapley1996}). The identification
gain of part (ii), by contrast, is the generic consequence that any restriction whose differential has the same
rank and is transverse to the gauge cuts it by the same local dimension---so the game's role there is to
\emph{supply and justify} the restriction, not to uniquely enable
identification. The Cournot market of Exp.~5 is such a potential game (symmetric interaction $\Rightarrow$
symmetrizable $B$), so the specification test fires there by construction.
\end{remark}

\section{From the observed layer to the latent layer: two transverse obstructions}
\label{sec:bridge}
Everything above assumed the state $\V$ is \emph{observed}. \Cref{thm:hedge} then says the residual
non-identification is an \emph{equilibrium hedge}: on an un-intervened full-rotational source component, the
\emph{second-moment} information fixes the component only up to a signed rotation. It is a rotation of the
\emph{exogenous source frame} --- it leaves $H$ (and $\V$) untouched and acts on the right, $M\mapsto MQ$ with
$M:=HP\Omega^{1/2}$ --- and it is a \emph{Gaussian/second-moment} phenomenon: under the LiNG assumption of
\Cref{def:model} it is collapsed passively in the noiseless submodel, or conditional on a separate noisy-ICA
identifiability theorem for the declared sensor-noise law (\Cref{prop:nongauss}). This is a per-environment
source-mixing result; it neither aligns environments nor separates $H$ from $B$.

Now let the state be seen only through an unknown sensor map, $\X=H\V$. A \emph{second} freedom appears, and it
is important to be precise about what it is and is not. \Cref{thm:gauge}(i) exhibits a legal stable
factorization chart, coordinatized for $d\ge2$ by a proper subset of $\GL^+(d)$ (and by all of
$\GL^+(1)$ when $d=1$), acting on the \emph{endogenous latent
frame}, $H\mapsto HR^{-1}$ with $B,\Omega$ re-normalised so that the mixing $M=HP\Omega^{1/2}$ is \emph{fixed};
it is present for \emph{every} noise law, and it is created purely by forgetting $H$ in the ambient
unknown-support information class. Relative charts compose only when the intermediate and endpoint
factorizations remain legal; these admissible arrows form the legal factorization groupoid of
\Cref{def:infoclass}.

\begin{remark}[The two freedoms are transverse, not nested]\label{rem:transverse}
It is tempting --- and wrong --- to say the hedge is a subgroup of the latent-frame chart that survives when
$H$ is known. The two act on different frames. Imposing $H'=H$ on the legal chart forces
$HR^{-1}=H$, hence $R=I$: the
subgroup of \Cref{thm:gauge}(i) surviving a \emph{known} $H$ is \emph{trivial}. Conversely a hedge member has
$M'=MQ\neq M$, so it is not a member of the fixed-$M$ chart for any $R$. The two are \emph{transverse constituents of
the passive ambiguity}, not one inside the other:
\[
\underbrace{\text{source-frame rotation}}_{\substack{\text{second moment; LiNG}\\\text{after declared ICA acquisition}}}
\qquad\times\qquad
\underbrace{\text{legal latent-frame chart}}_{\substack{\text{any noise law; ambient unknown-support,}\\\text{unknown-}H\text{ class}}} .
\]
The acyclic case settles it: an acyclic latent graph has \emph{no} hedge (singleton SCCs; \Cref{thm:hedge}) yet
can still carry the full-dimensional legal frame chart --- so the latent-frame ambiguity is not an ``inflation''
of the hedge. What is true,
and all that is needed, is that knowing $H$ removes the frame factor \emph{exactly} and leaves the source factor:
the hedge is the \emph{$H$-fixing residue} of the passive ambiguity, in the second-moment regime, on
full-rotational source components.
\end{remark}

The useful consequence is not a grading but a \textbf{pairing}: each layer contributes one obstruction, and the
two are removed by \emph{different} mechanisms --- which is precisely the design lesson.
\begin{itemize}[leftmargin=1.4em,itemsep=1pt,topsep=2pt]
\item The \emph{source-frame} factor is collapsed by \textbf{non-Gaussianity} in the noiseless LiNG submodel,
      or conditional on a separate noisy-ICA theorem (\Cref{prop:nongauss}); no intervention is required for
      this factor, but source-mixing acquisition and cross-environment alignment remain separate obligations.
\item In the ambient zero-diagonal unknown-support, unknown-$H$ class with $d\ge2$, \emph{shift} interventions never collapse the
      \emph{latent-frame} factor (\Cref{prop:shift}); the aligned, well-posed \textbf{mechanism interventions} of
      \Cref{thm:linear} do. In a different declared information class, support, known-$H$, or anchor/calibration
      restrictions may already shrink or pin the legal fibre.
\item Within that zero-diagonal class, the exact threshold for that collapse --- how many interventions, and \emph{on which nodes} --- is a
      \emph{closed-form corank criterion}: $\corank L=\#\{k\neq r: B_{kr}=0\}$, the number of nodes the single
      un-targeted node fails to parent (\Cref{prop:completion}), proved for \emph{all} $d$.
\end{itemize}
So the two layers are not the same theorem twice; they are two matched non-identification witnesses with two
distinct cures, and a design must pay for each separately. The remainder of the paper develops the latent layer
in that light.

\section{The latent equilibrium generative model}
\label{sec:model}
We reuse the SCM and ECG primitives of \Cref{sec:formalism} and add an observation map. Throughout,
$\V=(V_1,\dots,V_d)$ are \emph{latent} factors and $\X\in\R^p$ are observations.

\begin{definition}[Latent equilibrium ECG with mixing]\label{def:model}
A \emph{latent equilibrium generative model} is a tuple
$\mathcal{L}=(\mathcal{E},h,\mathcal{I},\Prob_{\U},\mathsf{Sel},\mathcal{O})$ where:
\begin{itemize}[leftmargin=1.3em,itemsep=1pt,topsep=1pt]
\item \textbf{Latent ECG} $\mathcal{E}$ (\Cref{def:ecg}): the latent state $\V$ is the
a.s.-unique equilibrium (fixed point) of a coupled best-response map among latent factors/agents. In the
\emph{linear} instance, $\V=B\V+\U$ with $\U=\Omega^{1/2}\bm\eta$, $\bm\eta$ independent, zero-mean,
unit-variance, and \emph{at most one} Gaussian (the LiNG condition \citep{Comon1994ica,Lacerda2008cyclic}); the
equilibrium is $\V=(I-B)^{-1}\U=P\U$, $P:=(I-B)^{-1}$. The cyclic SCCs are the
\emph{equilibrium components}; in the zero-diagonal linear subclass studied below, these are exactly its SCCs of
size $\ge2$. A \emph{source} component has no incoming edge from outside;
a \emph{full-rotational} source component carries only the zero-diagonal constraint (\Cref{thm:hedge}).
\item \textbf{Mixing class} $h$: either \emph{linear}, $\X=H\V+\epsilon$ with $H\in\R^{p\times d}$
of full column rank ($p\ge d$), where $\epsilon=0$ is allowed and any nonzero sensor-noise law is declared; or
\emph{nonlinear}, $\X=h(\V)$ with $h$ a diffeomorphism onto its image. $h$ is
\emph{environment-invariant} (the twin's sensors do not change under intervention). Exact mean-response
acquisition below additionally states its probe-arm sensor-mean and no-direct-sensor-effect assumptions.
\item \textbf{Intervention family} $\mathcal{I}$ (the twin's actuators; \Cref{def:intervention}): a
\emph{shift} (equilibrium) intervention $\V=B\V+\U+\bm\delta_e$ (the backShift analogue
\citep{Rothenhausler2015backshift}, $B$ unchanged) and/or a \emph{mechanism} ($\doop$) intervention
$\doop(V_j{\leftarrow}\text{exogenous})$ that zeros row $j$ of $B$ and gives $j$ a fresh source. Each
intervention defines an \emph{environment} $e$; we observe $K{+}1$ environments.
\item \textbf{Selection rule} $\mathsf{Sel}$: contraction $\rho(B)<1$ on each cyclic component (a reachable
equilibrium; \Cref{thm:exist}). Algebraic unique-solvability $\det(I-B)\neq0$ is weaker.
\item \textbf{Observation regime} $\mathcal{O}$: i.i.d.\ multi-environment (streaming/temporal flagged as an
extension, \Cref{sec:limits}).
\end{itemize}
For environment $e$ the \emph{source-to-observation mixing} is $M^{(e)}:=H\,(I-B^{(e)})^{-1}\Omega^{(e)1/2}$,
so $\X^{(e)}\eqd M^{(e)}\bm\eta^{(e)}+\epsilon^{(e)}$ in the linear case (and
$\X^{(e)}\eqd M^{(e)}\bm\eta^{(e)}$ only in the noiseless submodel).
\end{definition}

\begin{definition}[The recovery equivalence $\simeq$]\label{def:equiv}
Two linear models $(H,B,\Omega)$ and $(H',B',\Omega')$ are \emph{equivalent}, $\simeq$, if there is a
permutation $\pi$ and nonzero scales $\lambda_i$ with $H'=H P_\pi \Lambda$, $\bm\eta'=\Lambda^{-1}P_\pi^\top
\bm\eta$, \emph{and}, within each full-rotational source component $S$ not targeted by any mechanism
intervention, a special-orthogonal $Q_S\in\SO(|S|)$ (a local signed branch preserving positive noise
variances) relating the within-$S$ latent roots (the \emph{equilibrium
hedge}). We say $(h,B)$ is \emph{identified up to $\simeq$} if every model with the same multi-environment
observational law is $\simeq$-equivalent to the truth.
\end{definition}

\Cref{def:equiv} is the precise content of ``up to a stated equivalence'': permutation$+$component-wise
scaling, \emph{modulo} the inherited signed-$\SO(m)$ hedge on un-intervened source components. For a nonlinear
source block $S\subset\R^m$ with $m\ge2$, the analogue (\Cref{thm:block}) replaces $\Lambda$ by component-wise
diffeomorphisms and $Q_S$ by a within-$S$ gauge left unresolved (strictly between $\Ortho(m)$ and the full
diffeomorphism group). Its cross-block construction uses $A\in\R^{m_A}$, $B\in\R^{m_B}$ with $m_A\ge2$ and
$m_B\ge1$, after naming the block that is rotated as $A$.

\begin{definition}[Declared information class, legal fibre, and factorization groupoid]\label{def:infoclass}
A \emph{declared information class} $\mathfrak C$ fixes the model space and every item treated as known: structural
support and sign stratum, source/noise family, sensor restrictions (including known $H$, anchors, or
calibration), intervention/probe design, stability and well-posedness requirements, and the recovery equivalence
$\simeq$. Write $\Theta_{\mathfrak C}$ for its legal parameter space and
$\Phi_{\mathfrak C}:\Theta_{\mathfrak C}\to\mathcal D$ for the complete population-data map. At truth $\theta$,
the \emph{legal data fibre} is
\[
 \mathfrak F_{\mathfrak C}(\theta)
 :=\Phi_{\mathfrak C}^{-1}\!\bigl(\Phi_{\mathfrak C}(\theta)\bigr).
\]
Its legal factorization groupoid has objects in $\Theta_{\mathfrak C}$ and an arrow $\theta\to\theta'$ for an
admissible invertible frame change that preserves $\Phi_{\mathfrak C}$ and whose source and target both remain
legal; arrows compose only when their endpoints match, and the inverse frame change supplies the inverse arrow.
A relative coordinate set such as $G^+$ below need not be a group. Full point identification up to $\simeq$
means exactly that $\mathfrak F_{\mathfrak C}(\theta)/{\simeq}$ is a singleton; query identification means that
the query is constant on the entire fibre. Trivial isotropy of the single object $\theta$ is neither condition:
it rules out only arrows from $\theta$ to itself, not arrows to distinct data-equivalent objects.
\end{definition}

\section{Identifiability of the latent state}
\label{sec:latent}
We state the four results and their non-identifiability counterparts; full proofs are in \Cref{app:proofs}.
Selected algebraic identities are checked by direct symbolic calculation, and finite-instance evaluations
corroborate the named examples; these calculations do not replace the proofs.

\subsection{The equivalence class and its gauge}
The first latent-layer result describes the passive ambiguity before interventions are introduced.

\begin{theorem}[Gauge of the cyclic latent factorization]\label{thm:gauge}
Let $\mathcal{L}$ be linear with full-column-rank $H$ and a single environment (no interventions).
Let the truth be a \emph{legal} ECG: zero-diagonal $B$ with $\rho(B)<1$ (so $C:=I-B$ has unit diagonal and is
invertible), diagonal $\Omega\succ0$; write $A:=(I-B)^{-1}\Omega^{1/2}$ and $M=HA$.
\textbf{(i) Stable factorization chart.} \emph{(i-a) Acquisition versus collision.} In the noiseless LiNG
submodel, the passive law identifies $M$ up to a monomial transform; with additive sensor noise, that acquisition
statement instead requires a separate noisy-ICA identifiability theorem for the declared joint source/noise
model. The collision below does not require $M$ to be identified: fix the truth's $M$ and the same admissible
joint pair $(\bm\eta,\epsilon)$ across alternatives. Then every zero-diagonal $B'$ with
$\det(I-B')\neq0$ is reproduced by $H'=M(I-B')$,
$\Omega'=I$ --- a $(d^2{-}d)$-dimensional family of \emph{factorizations} (not all legal ECGs). \emph{(i-b) In-class.}
With the positive row-normalizer $\Omega'^{1/2}:=\diag(1/(RA)^{-1}_{ii})$, $B':=I-\Omega'^{1/2}(RA)^{-1}$ and
$H'=HR^{-1}$, the map $R\mapsto(B',\Omega')$ is a \emph{diffeomorphism} from the \emph{stable chart}
$G^+:=\{R\in\GL(d):(RA)^{-1}\text{ has positive diagonal},\ \rho(B')<1\}$ onto the \emph{entire} legal space
$\{$zero-diagonal $B'':\rho(B'')<1\}\times\{$diagonal $\Omega''\succ0\}$, with inverse
$R=(I-B'')^{-1}\Omega''^{1/2}A^{-1}$; because every alternative has
$\X'=M\bm\eta+\epsilon=\X$, the complete law is preserved and $H'=HR^{-1}$ keeps full column rank. Hence
\emph{every} legal $(B,\Omega)$ is observationally equivalent to the truth. For $d\ge2$, $B$ is \emph{completely
non-identified within the class}, over the full $(d^2{-}d)$-dimensional $B$-space: the contraction constraint
carves the open chart $G^+$ but does \emph{not} reduce the non-identification dimension (differential
$\dot B'=CE-\diag(CE)C$ at $R=I$, surjective onto the zero-diagonal space with $d$-dimensional kernel
$\{C^{-1}DC:D\text{ diagonal}\}$ --- the noise-rescaling fibre). At $d=1$, zero diagonality forces $B=0$, so
$B$ is trivially identified and $G^+=\GL^+(1)$ varies only the sensor/noise scaling. The chart is open and
connected for every $d$. For $d\ge2$ it is a proper subset of $\GL^+(d)$ containing $I$---a chart, not a
subgroup: the orientation-preserving $C'=[\begin{smallmatrix}1&2\\-2&1\end{smallmatrix}]$ has
$\det C'=5$ but gives $\rho(B')=2$, and composing two legal relative coordinates can exit, with $\rho$ jumping
$0.9\!\to\!9.47$. There is no canonical volume fraction because its value depends on the chosen measure. This
is the \emph{unknown-$H$}, unknown-support freedom in this declared information class: constraints tying $H$ or
restricting support replace $G^+$ by the corresponding legal subfibre and can shrink or pin it.
\textbf{(ii) Hedge residual.} In the second-moment (Gaussian) regime, the
residual gauge on a full-rotational source component $S$ is the local signed-$\SO(|S|)$ branch (covariance
is invariant to $\Ortho(|S|)$; the admissible positive-variance orbit is the connected signed-$\SO(|S|)$)---exactly the
observed-layer equilibrium hedge (\Cref{thm:hedge})---of dimension $|S|(|S|{-}1)/2$. Acyclic latents have singleton
components and no such gauge.
\end{theorem}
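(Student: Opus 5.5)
The core observation is that the observational law depends on $(H,B,\Omega)$ only through the mixing $M=HA$ and the fixed joint law of $(\bm\eta,\epsilon)$, since $\X=M\bm\eta+\epsilon$. The whole of (i) is therefore a reparametrization argument that keeps $M$ fixed. Part (ii) reduces to the observed-layer hedge.

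\textbf{Part (i-a).} I will cite the Comon/LiNG identifiability of $M$ up to monomials in the noiseless case, and flag the noisy case as conditional. For the collision, given any zero-diagonal $B'$ with $\det(I-B')\neq0$, I set $\Omega'=I$ and $H'=M(I-B')$. Then
$$H'(I-B')^{-1}\Omega'^{1/2}=M,$$
so the law is reproduced. $H'$ has full column rank because $M$ does ($H$ has full column rank and $A$ is invertible) and $I-B'$ is invertible.

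\textbf{Part (i-b).} Fix $R\in\GL(d)$. Write $(RA)^{-1}=A^{-1}R^{-1}=\Omega^{-1/2}CR^{-1}$.

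\emph{Legality of the image.} Left-multiplying by the positive diagonal $\Omega'^{1/2}=\diag(1/(RA)^{-1}_{ii})$ yields a unit-diagonal matrix. Hence $B'=I-\Omega'^{1/2}(RA)^{-1}$ is zero-diagonal, and then
$$(I-B')^{-1}\Omega'^{1/2}=RA, \qquad H'(I-B')^{-1}\Omega'^{1/2}=HR^{-1}RA=M.$$
So the law is preserved, and $H'=HR^{-1}$ keeps full column rank.

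\emph{Inverse map.} Given a legal $(B'',\Omega'')$, I set $R=(I-B'')^{-1}\Omega''^{1/2}A^{-1}$. Then $(RA)^{-1}=\Omega''^{-1/2}(I-B'')$ has positive diagonal $\Omega''^{-1/2}_{ii}$, so $R\in G^+$ and the formulas return $(B'',\Omega'')$. Both maps are rational and smooth, so this is a diffeomorphism onto the legal space. Consequently $G^+$ is open, being defined by open conditions. It is also connected, since it is the diffeomorphic image of a product of the star-shaped set $\{B:\rho(B)<1\}$ (scale $tB$) and an open orthant.

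\emph{Differential at $R=I$.} Take $R=I+tE$. Then $(RA)^{-1}\approx\Omega^{-1/2}C(I-tE)$. Differentiating $B'=I-\diag(\cdot)^{-1}(\cdot)$ gives $\dot B'=CE-\diag(CE)C$ up to the $\Omega$ normalization, which cancels. This is surjective because the chart map is a diffeomorphism onto an open set of dimension $d^2-d$. The kernel has dimension $d^2-(d^2-d)=d$, and I will check that it is exactly $\{C^{-1}DC\}$: setting $CE=DC$ makes $\dot B'=DC-DC=0$.

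\emph{Case $d=1$, and the examples.} For $d=1$, $B=0$ is forced and $R$ is a positive scalar. For the counterexamples I take $C'=\left[\begin{smallmatrix}1&2\\-2&1\end{smallmatrix}\right]$, which gives $B'$ with eigenvalues $\pm 2i$, so $\rho(B')=2$. I will exhibit one composition that leaves the chart, using the quoted numerical instance. Imposing known support or anchors on $H$ simply intersects the chart.

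\textbf{Part (ii).} With $H$ known, the second-moment law determines only $AA^\top$ on a source component $S$. This is invariant under $A\mapsto AQ$ with $Q\in\Ortho(|S|)$. Positivity of the variances after renormalization selects the connected signed-$\SO$ branch containing $I$, which has dimension $|S|(|S|-1)/2$. This matches \Cref{thm:hedge}. Acyclic latents have triangular structure that pins $Q$ to signs.

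\textbf{Main obstacle.} I expect the main obstacle to be stating the diffeomorphism cleanly, in particular verifying that the image is the \emph{entire} legal space and that $G^+$ is connected rather than just open. The explicit inverse formula is what should make this go through.
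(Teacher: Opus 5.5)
Your proposal is correct and follows the paper's proof essentially step for step: the same row-normalized chart $R\mapsto(B',\Omega')$ with explicit inverse $R=(I-B'')^{-1}\Omega''^{1/2}A^{-1}$, the same differential $\dot B'=CE-\diag(CE)C$ with kernel $\{C^{-1}DC\}$, the same $C'$ witness, and the same $\Ortho(|S|)$-invariance of $AA^\top$ for (ii), and your star-shaped pullback argument usefully supplies the connectedness of $G^+$ (hence $G^+\subseteq\GL^+(d)$, since $I\in G^+$), which the paper only asserts. Two points need rewording. First, surjectivity of $R\mapsto B'$ holds because the full map $R\mapsto(B',\Omega')$ is a diffeomorphism between $d^2$-dimensional sets and $B'$ is a coordinate projection of it (or directly, as the paper does, by taking $E=C^{-1}\Delta$ for zero-diagonal $\Delta$), not because the chart is a diffeomorphism onto a $(d^2-d)$-dimensional set. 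Second, justify the acyclic clause as the paper does, by $|S|=1$ giving $\dim\SO(1)=0$; your triangular/Cholesky argument presupposes a known causal order, which the unknown-support class does not supply.
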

\interpretation{In the theorem's ambient unknown-support, unknown-$H$ class, the single-environment linear
passive law does not identify $B$ for $d\ge2$: every legal stable zero-diagonal $B$ is represented within the
stable chart, while the $d=1$ case fixes $B=0$. In the second-moment regime, a full-rotational component retains
the stated signed-$\SO$ gauge; acyclic singleton components do not.}
\noindent\emph{Reading.} In the theorem's ambient unknown-support, unknown-$H$ information class and for $d\ge2$,
passive cyclic latents are maximally non-identified: \emph{every} legal $B'$ (with $\rho(B')<1$) is reproduced
by holding the truth's source mixing $M$ and complete joint source/noise law fixed and setting
$H'=M\Omega'^{-1/2}(I-B')$ --- a genuine stable chart of alternatives, not the full
$\GL(d)$ orbit (which exits the contraction class). At $d=1$, $B=0$ is the unique legal zero-diagonal structural
matrix; only the sensor/noise scaling fibre remains passive. Aligned mechanisms are one way to cut the chart down to
$\simeq$; support, $H$-anchors, or calibration restrictions define different subfibres. In particular, object
isotropy cannot replace the complete-fibre test of \Cref{def:infoclass}: for
\[
 B_t=\begin{pmatrix}0&t\\ t&0\end{pmatrix},\qquad H_t=I-B_t,\qquad 0<t<1,
\]
all legal models have $H_t(I-B_t)^{-1}=I$ and hence the same passive law (take $\Omega=I$ and the same sources),
although the continuum of distinct $(H_t,B_t)$ need not arise from a nontrivial stabilizer of any one object.

\subsection{Linear identifiability and intervention requirements}
The next theorem states what aligned response maps identify in the linear subclass, the intervention counts
under the named support assumptions, and the fixed-dimensional recovery rate.

\begin{theorem}[Linear identifiability, minimal interventions, and sample complexity]\label{thm:linear}
Let $\mathcal{L}$ belong to the zero-diagonal linear subclass, with every legal alternative likewise constrained
to have zero-diagonal $B$, full-column-rank environment-invariant $H$, LiNG sources, and a family of
single-target mechanism ($\doop$) interventions with target set $T=\bigcup_e T_e$; $K:=|T|$ counts
\emph{mechanism} environments. The recovery consumes each environment's aligned population response matrix
$M_0^{(e)}=H(I-B^{(e)})^{-1}$. These matrices may be supplied directly or acquired as follows. Within every
environment $e$, apply the same $q$ labelled, known latent-coordinate shift probes with design
$\mathsf D=[\delta^{(1)},\ldots,\delta^{(q)}]\in\R^{d\times q}$, $\rank\mathsf D=d$. Require that the actuator
affect $\X$ only through $\V$ and that the sensor-noise conditional mean be the same in the baseline and every
probe arm of that environment (or that its known arm-specific difference be removed). If
$R_e=[\E\X^{(e,\ell)}-\E\X^{(e,0)}]_{\ell=1}^q$, then
\[
 M_0^{(e)}=R_e\mathsf D^\top(\mathsf D\mathsf D^\top)^{-1}.
\]
The special design $\mathsf D=\Delta I_d$ uses $d$ known-magnitude coordinate probes. These acquisition probes
are not counted in $K$. In the noiseless submodel (or under a separately stated noisy-ICA identifiability
theorem for the declared sensor-noise law), independent per-environment ICA instead identifies
$M^{(e)}=M_0^{(e)}\Omega^{(e)1/2}$ only up to an environment-specific monomial transform; it supplies the aligned
$M_0^{(e)}$ input only under a separate common-label/common-scale and source-scale alignment theorem or
equivalent anchors, not from independent ICA runs alone. The counts below are therefore \emph{given aligned
$M_0^{(e)}$}, with the calibrated full-rank mean-response design above as the theorem-backed acquisition route.
\begin{enumerate}[leftmargin=1.5em,itemsep=1pt,topsep=1pt]
\item[\textbf{(a)}] \textbf{Sufficiency.} If $T=[d]$ (full coverage), \Cref{lem:recover} reconstructs every
row of $P$, so $(H,B)$ is identified up to $\simeq$ (\Cref{def:equiv}) for \emph{every} admissible instance.
If exactly one node $r$ is un-targeted --- with every $\doop$ well posed, $P_{jj}\neq0$ for $j\neq r$
(\Cref{lem:recover}) --- the other $d{-}1$ rows are recovered as above and row $r$ is pinned by the linear
cofactor system of \Cref{lem:recover} \emph{iff} the resulting $d\times d$ completion matrix $L$ is
nonsingular. \Cref{prop:completion} gives $L$ in closed form and settles nonsingularity for \emph{all} $d$
(no strong-connectivity hypothesis): with $C:=I-B$,
$L=-\diag(Ce_r)\,C^\top$, so $\det L=-\big(\prod_{k\neq r}B_{kr}\big)\det C$ (equivalently, in the
adjugate normalisation of \Cref{lem:recover}, $\det\propto(\det P)^{d-1}\prod_{k\neq r}B_{kr}$, matching
$B_{1r}B_{2r}(\det P)^2$ at $d=3$) and, more sharply, $\corank L=\#\{k\neq r:B_{kr}=0\}$. Hence, at generic
parameters, a single un-targeted node identifies \emph{iff it directly parents every other node of the
graph} (no missing child) --- a \emph{graph-global} condition; the un-identified dimension equals the number
of nodes $r$ fails to parent (its missing children). A node with a missing child is thus \emph{not} identified at $K=d-1$: a
directed ring of size $d\ge3$ (out-degree one) collides in $d-2$ directions (a single direction at $d{=}3$), so $K_{\min}=d$; a \emph{sink}
(out-degree zero) collides in $d-1$ directions. The latter example shows why a component-local reading of the
\emph{iff} is insufficient: a sink lies in no source component and therefore vacuously ``parents its component.''
\item[\textbf{(b)}] \textbf{Necessity.} In the ambient unknown-support class, if \emph{two or more} nodes
$r,s$ are never $\doop$-targets, then $(H,B)$ is \emph{non-identified}. With $C=I-B$ and
$E_t=t(e_s-C_{rs}e_r)e_r^\top$, the explicit family
\[
 C_t=C(I+E_t),\quad P_t=(I+E_t)^{-1}P,\quad B_t=I-C_t,\quad H_t=H(I+E_t)
\]
has $\diag C_t=\mathbf1$, shares every targeted row of $P$, and therefore reproduces every environment response.
For all sufficiently small $t\ne0$ it is invertible and stable, while $I+E_t$ is non-monomial, so full-column-rank
$H$ makes it non-$\simeq$-equivalent. This gives a continuous collision without invoking a full group action.
\item[\textbf{(c)}] \textbf{Minimal $K$.} The criterion is query-relative, with fixed counts only as
corollaries. The operative quantity is
$K_{\min}(Q,G)=\min_{\mathcal I}|\mathcal I|$ subject to $Q$ being constant on the complete admissible legal
fibre of the identifying map. On a regular constant-rank neighbourhood, the condition
$u_Q(\theta;\mathcal I)=0$ at every nearby $\theta$ is the local differential screen; global identification still
requires clearance of every disconnected/discrete fibre component and the gauge-pinning layer. The fixed counts
below are corollaries under named supports. For a single full-rotational source component of size $d$: $K=d-2$ is non-identified (b), and
$K=d-1$ is sufficient \emph{iff} the un-targeted node directly parents every other node of the graph (a) --- the \emph{iff} proved for all $d$ by the closed-form completion criterion of \Cref{prop:completion} ($\corank L=\#$ missing children; the ring $K=d$ and sink $K_{\min}$ counterparts are corollaries). Hence
$K_{\min}=d-1$ when the component has such a node (e.g.\ complete support) and $K_{\min}=d$ otherwise (e.g.\ a
directed ring of size $d\ge3$, whose every node has out-degree one). For a \emph{general} graph with \emph{unknown}
mixing support, targeting all but one node in the whole graph is \emph{necessary}: any two un-targeted rows
leave a positive-dimensional family of $(H',B')$ reproducing every environment's law (the half-trek transfer
\citep{FoygelDraismaDrton2012} that would localise interventions component-by-component is an
\emph{observed-variable} argument and does not survive an unknown mixing). With the sole un-targeted
row $r$, it is \emph{sufficient} at $K=d-1$ exactly when the graph-global condition of (a) holds:
$B_{kr}\ne0$ for every $k\ne r$, so $r$ directly parents every other node. A completion-deficient choice of
$r$ (e.g.\ any node of a source ring of size at least $3$) still requires full coverage $K=d$. With
\emph{known} support, the exact support-constrained completion map
$\mathcal C_{G,T}$ of \Cref{prop:knownsupport} supplies the criterion: on a regular constant-rank quotient
stratum, local identification is equivalent to full column rank on the actual constrained tangent, while global
identification additionally requires its complete legal zero-fibre to be one declared-equivalence class (or the
query to be constant there). There is no universal scalar target count, SCC sum, or lower bound: a known directed
$3$-cycle is identified from one target although $\sum_S(|S|-1)=2$ (\Cref{prop:knownsupport}).
\item[\textbf{(d)}] \textbf{Sample complexity.} Work at a regular instance in an identified regime of (a)/(c):
after quotienting by $\simeq$, the relevant recovery branch is locally single-valued, every denominator used by
the row recovery is nonzero, and the applicable completion matrix is nonsingular (for the one-missing-row case,
$\sigma_{\min}(L)>0$ in the row-$P$ normalization of \Cref{prop:completion}). At \emph{fixed} $(p,d,K,q)$,
suppose every baseline/probe arm has $n$ independent (or otherwise CLT-valid) samples with finite covariance,
the sensor-mean/no-direct-effect conditions above hold, and the fixed probe design has $\rank\mathsf D=d$.
Then the plug-in recovery on that branch is
$\sqrt n$-consistent,
$\|\widehat B-B\|_F=O_p(n^{-1/2})$; Exp.~9 is consistent with this predicted rate on its $d=3$ simulated
instances. The constant grows with $\|(I-B)^{-1}\|$, $\|I-B\|^2$, $\sigma_{\min}(H)^{-1}$,
$\max_j|P_{jj}|$ (equivalently $(\min_j|1-w_{jj}|)^{-1}$: larger loop gain \emph{worsens} conditioning, since
$P_{jj}=1/(1-w_{jj})$ gives $dP_{jj}/dw_{jj}=P_{jj}^2$), the completion-system conditioning
$\sigma_{\min}(L)^{-1}$ when that solve is used, and the inverse probe-design
norm $\|\mathsf D^\top(\mathsf D\mathsf D^\top)^{-1}\|$ (equal to $\Delta^{-1}$ for
$\mathsf D=\Delta I_d$), and the source/sensor second moments (or the corresponding moments required by the
declared CLT); in adjugate coordinates the completion factor is instead that of the actual cleared matrix
$\widetilde L=(\det P)L$, including its declared scale. We do \emph{not} bound $\|(I-B)^{-1}\|$ by
$(1-\rho(B))^{-1}$ because an explicit non-normal counterexample violates this bound. We \emph{do not} claim a dimension-explicit rate:
for a fixed matrix, uniform entrywise $O_p(n^{-1/2})$ bounds give only the loose Frobenius upper order
$O_p(d/\sqrt n)$ across about $d^2$ entries; a lower order or a uniform-in-$d$ statement needs nondegenerate
covariance and matrix-concentration / operator-norm assumptions, which we leave open. A finite-sample
error-ratio calculation at $d=3$ is only a diagnostic and does not establish the exponent or dimensional scaling.
\end{enumerate}
The closed form of \Cref{lem:recover} uses the full coverage $T=[d]$ for simplicity; the $K=d-1$ threshold
in (c) is the \emph{information-theoretic} boundary, established for \emph{all} $d$ by the closed-form completion criterion (\Cref{prop:completion}: $\corank L=\#$ missing children) and corroborated on universal-parent supports by a seeded parametrized collision search on random instances
(Exp.~8); the explicit family in part~(b) proves the general-graph necessity clause
(unknown support), and a separate finite-dimensional calculation corroborates a named multi-component instance.
\end{theorem}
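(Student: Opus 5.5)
The plan is to reduce every clause to matrix identities among the aligned responses $M_0^{(e)}=H(I-B^{(e)})^{-1}$. The only handle on $H$ comes from cancelling it with a pseudo-inverse.

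\textbf{Acquisition.} In probe arm $\ell$ of environment $e$ the latent equation is $\V=B^{(e)}\V+\U+\delta^{(\ell)}$. By the no-direct-sensor-effect and equal-sensor-mean assumptions, $\E\X^{(e,\ell)}-\E\X^{(e,0)}=M_0^{(e)}\delta^{(\ell)}$, hence $R_e=M_0^{(e)}\mathsf D$. Since $\rank\mathsf D=d$, the matrix $\mathsf D\mathsf D^\top$ is invertible, and right-multiplying by $\mathsf D^\top(\mathsf D\mathsf D^\top)^{-1}$ gives the displayed formula. The remark about independent ICA runs I will support by citing \Cref{prop:nongauss}: each run yields $M^{(e)}$ only up to its own monomial, so the runs need not be aligned.

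\textbf{(a) Sufficiency.} With $C=I-B$ and $C_e$ the matrix $C$ with row $j$ replaced by $e_j^\top$, full column rank of $H$ gives the $H$-free quantity
\[
(M_0^{(0)})^{+}M_0^{(e)}=CC_e^{-1}=I+e_jw_j^\top .
\]
This is a rank-one update that exposes row $j$. I then invoke \Cref{lem:recover} to turn each such update, under $P_{jj}\neq0$, into row $j$ of $P$. With $T=[d]$ this gives all of $P$, hence $B=I-P^{-1}$ and $H=M_0^{(0)}P^{-1}$; the residual scaling and labelling freedom is exactly $\simeq$. For one untargeted node $r$, the missing row is constrained by the cofactor relations of \Cref{lem:recover}, which are linear in the unknown row. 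The clause then follows from \Cref{prop:completion}: it gives $L=-\diag(Ce_r)C^\top$, so $\corank L$ equals the number of zero entries $C_{kr}=-B_{kr}$ with $k\neq r$. The ring and sink counts are read off from this.

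\textbf{(b) Necessity.} I check the explicit family directly. Write $E_t=v e_r^\top$ with $v=t(e_s-C_{rs}e_r)$. Then $C_t=C(I+E_t)$ changes only column $r$ of $C$, and its $(r,r)$ entry changes by $t(C_{rs}-C_{rs}C_{rr})=0$, so $\diag C_t=\mathbf 1$. Every environment response is reproduced: for a target $j\notin\{r,s\}$, row $j$ of $E_t$ vanishes, so $C_t^{(e)}=C^{(e)}(I+E_t)$, and therefore
\[
H_t\,(C_t^{(e)})^{-1}=H(I+E_t)(I+E_t)^{-1}C_e^{-1}=M_0^{(e)} .
\]
The same computation covers the baseline. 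Invertibility and $\rho(B_t)<1$ hold for small $t$ by continuity. Since $I+E_t$ is not monomial and $H$ has full column rank, the alternatives are not $\simeq$-equivalent to the truth.

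\textbf{(c) Minimal $K$.} This is assembled from (a), (b), the local-versus-global fibre statement of \Cref{thm:interv-id}, and \Cref{prop:knownsupport} for the known-support remarks.

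\textbf{(d) Sample complexity.} At a regular point the recovery map is a composition of smooth functions of the arm means: the inverse of $\mathsf D\mathsf D^\top$, pseudo-inverses, the row formulas, and $L^{-1}$. The CLT gives $\sqrt n$-rates for the arm means, and the delta method transfers them to $\widehat B$, with constants read off from the derivatives of these maps.

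\textbf{Main obstacle.} The hardest step is the completion in (a): showing that the cofactor system is linear in the missing row and has exactly the matrix $L$. I would rely on \Cref{prop:completion}, first verifying the $d=3$ normalisation $B_{1r}B_{2r}(\det P)^2$ as a consistency check.
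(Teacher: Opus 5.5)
Your proposal is correct and follows the paper's proof essentially step for step. The shared steps are the right-inverse acquisition, \Cref{lem:recover} for full coverage, deferring the one-missing-row completion (including automatic environment matching and genuineness of the $\ker L$ collision) to \Cref{prop:completion}, the explicit $E_t$ family for (b), and CLT plus a locally Lipschitz/delta-method composition for (d). Your direct factorization $C_t^{(e)}=C^{(e)}(I+E_t)$ in (b) is just a self-contained form of the paper's ``shared targeted rows of $P$ plus Lemma~0'' step.

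One sign needs fixing: with the $w_j$ of \Cref{lem:recover}, one has $(M_0^{(0)})^{+}M_0^{(e)}=CC_e^{-1}=I-e_jw_j^\top$. You must flip your sign before applying $P_{jj}=1/(1-w_{jj})$.
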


\interpretation{Given the aligned response matrices and the theorem's acquisition and model assumptions, full
target coverage identifies $(H,B)$ up to $\simeq$. In the ambient unknown-support class, leaving one node
un-targeted identifies exactly when that node directly parents every other node, while leaving two un-targeted
nodes gives non-identification. The fixed-dimensional plug-in rate applies only on the stated regular identified
branch, and no dimension-explicit rate is claimed.}

\begin{remark}[Spectral stability versus resolvent conditioning]\label{rem:rho-resolvent}
For the signed two-cycle
$B=\left(\begin{smallmatrix}0&\beta\\-\delta&0\end{smallmatrix}\right)$ with $\beta,\delta>0$,
$\det(I-B)=1+\beta\delta$ and $\rho(B)=\sqrt{\beta\delta}$.  On the balanced path
$\beta=\delta=t$, $(I-B)^\top(I-B)=(1+t^2)I$, so
$\|(I-B)^{-1}\|_2=(1+t^2)^{-1/2}$ and $\kappa_2(I-B)=1$ even as $\rho(B)=t\to1$.
Thus the contraction margin and resolvent conditioning are distinct: $\rho(B)<1$ governs the declared
contraction regime, whereas $\sigma_{\min}(I-B)$ governs the resolvent norm.  This distinction does not exclude
slow finite iteration, solver tolerances, dynamics, active-set changes, or other optimizer mechanisms.
\end{remark}

Given the aligned baseline and target-environment response matrices under the preceding full-column-rank
assumptions, a well-posed single-node mechanism intervention recovers the corresponding row of the equilibrium
resolvent.

\begin{lemma}[Closed-form cyclic recovery]\label{lem:recover}
Let $M_0:=H(I-B)^{-1}$ and, for a $\doop$ on node $j$ that is well posed ($\det(I-B^{(j)})\neq0$, equivalently $P_{jj}\neq0$), $M_0^{(j)}:=H(I-B^{(j)})^{-1}$. Then
$D_j:=M_0^{(j)}-M_0$ has rank at most one (it is zero exactly when row $j$ of $B$ is zero),
\[
D_j=-\,\frac{(M_0\, e_j)\,(b_j^\top P)}{P_{jj}},\qquad b_j^\top P=(P-I)_{j,:},\quad P:=(I-B)^{-1},
\]
where $b_j^\top$ is row $j$ of $B$. Hence $w_j^\top:=-(M_0e_j)^\top D_j/\|M_0e_j\|^2=(P-I)_{j,:}/P_{jj}$ is
exact; $P_{jj}=1/(1-w_{jj})$ and row $j$ of $P$ equals $e_j^\top+P_{jj}w_j^\top$. Stacking all rows recovers
$P$, then $B=I-P^{-1}$ and $H=M_0P^{-1}$, up to the labelling/scale of $\simeq$.
\end{lemma}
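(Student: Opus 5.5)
The plan is a single Sherman--Morrison computation. First I will pin down the edited structural matrix, then read off the rank-one difference, and finally invert the formula row by row.

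\emph{Step 1 (the edit as a rank-one update).} By \Cref{def:model}, a mechanism intervention on node $j$ zeros row $j$ of $B$ and gives $V_j$ a fresh source. Hence $B^{(j)}=B-e_jb_j^\top$ and
\[
I-B^{(j)}=C+e_jb_j^\top,\qquad C:=I-B.
\]
The fresh source changes only the noise, not the response matrix $H(I-B^{(j)})^{-1}$, so it plays no role in $M_0^{(j)}$.

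\emph{Step 2 (well-posedness and the key identity).} From $P=(I-B)^{-1}$ we have $P=I+BP$. Taking row $j$ gives $P_{j,:}=e_j^\top+b_j^\top P$, i.e.\ $b_j^\top P=(P-I)_{j,:}$. In particular $1+b_j^\top Pe_j=P_{jj}$. The matrix determinant lemma then gives
\[
\det(I-B^{(j)})=\det C\,(1+b_j^\top C^{-1}e_j)=\det C\cdot P_{jj}.
\]
Since $\det C\neq0$, well-posedness of the intervention is equivalent to $P_{jj}\neq0$.

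\emph{Step 3 (the difference $D_j$).} Sherman--Morrison gives
\[
(C+e_jb_j^\top)^{-1}=P-\frac{Pe_j\,b_j^\top P}{P_{jj}}.
\]
Left-multiplying by $H$ yields
\[
D_j=-\frac{(M_0e_j)(b_j^\top P)}{P_{jj}},
\]
which is an outer product and so has rank at most one.

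\emph{Step 4 (when $D_j$ vanishes, and the extraction of $w_j$).} The vector $M_0e_j=HPe_j$ is nonzero, because $H$ has full column rank and $P$ is invertible. Since $P$ is invertible, $b_j^\top P=0$ exactly when $b_j=0$; hence $D_j=0$ exactly when row $j$ of $B$ is zero. Left-multiplying $D_j$ by $-(M_0e_j)^\top/\|M_0e_j\|^2$ returns exactly $(b_j^\top P)/P_{jj}=(P-I)_{j,:}/P_{jj}=w_j^\top$. Its $j$-th entry is $w_{jj}=1-1/P_{jj}$, which is never equal to $1$, so $P_{jj}=1/(1-w_{jj})$. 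Consequently row $j$ of $P$ equals $e_j^\top+P_{jj}w_j^\top$.

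\emph{Step 5 (reassembling the model).} Stacking the recovered rows over $j\in[d]$ gives $P$. Then $B=I-P^{-1}$, and $H=M_0P^{-1}$ follows from $M_0=HP$.

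\emph{Expected obstacle.} The only delicate point is the final clause about $\simeq$. Everything above is exact given aligned $M_0$ and $M_0^{(j)}$, meaning a common labelling and scale of latent coordinates across environments. The residual ambiguity therefore comes only from how that alignment was fixed. A relabelling or rescaling $H\mapsto HP_\pi\Lambda$ transforms $(P,B)$ by the corresponding similarity, which preserves the zero diagonal. I will state this as the inherited labelling/scale of $\simeq$, not derive it here.
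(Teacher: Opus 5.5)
Your proposal is correct and follows essentially the same route as the paper's proof: write the edit as the rank-one update $I-B^{(j)}=(I-B)+e_jb_j^\top$, apply Sherman--Morrison, use $b_j^\top P=(P-I)_{j,:}$ to turn the denominator into $P_{jj}$, and then read off $w_j$, $P_{jj}=1/(1-w_{jj})$, and the rows of $P$. You also supply two points the paper's proof leaves implicit: the matrix-determinant-lemma identity $\det(I-B^{(j)})=\det(I-B)\,P_{jj}$, and the use of full column rank of $H$ to get $M_0e_j\neq0$, which is needed both for dividing by $\|M_0e_j\|^2$ and for the ``zero exactly when $b_j=0$'' clause.
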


In the ambient unknown-support class, under the proposition's zero-diagonal, nonsingularity, and
targeted-intervention well-posedness assumptions, the next result determines when the remaining row is
identified.

\begin{proposition}[Closed-form completion criterion; general-$d$ universal-parent characterisation]\label{prop:completion}
Let $B$ have zero diagonal, $C:=I-B$ nonsingular, $P:=C^{-1}$, and un-target a single node $r$ with every
$\doop$ on $j\neq r$ well posed ($P_{jj}\neq0$). With the $d{-}1$ rows $j\neq r$ of $P$ pinned by
\Cref{lem:recover} and row $r$ the unknown $x$, matching every environment mixing is automatic (Lemma~0
below), so the only constraints are the $d$ zero-diagonal equations $(P'^{-1})_{jj}=1$, whose Jacobian at the
truth is the completion matrix
\[
  L=-\diag(Ce_r)\,C^\top,\qquad (Ce_r)_r=1,\ (Ce_r)_k=-B_{kr}\ (k\neq r).
\]
Consequently $\det L=-\big(\textstyle\prod_{k\neq r}B_{kr}\big)\det C$ and, for the adjugate normalisation
$\widetilde L=(\det P)\,L$ of the cofactor system, $\det\widetilde L=-(\det P)^{d-1}\prod_{k\neq r}B_{kr}$;
moreover $\corank L=\#\{k\neq r:B_{kr}=0\}$ with $\ker L=P^\top\!\operatorname{span}\{e_k:k\neq r,\ B_{kr}=0\}$.
Hence row $r$ is identified iff $B_{kr}\neq0$ for every $k\neq r$ (\emph{$r$ directly parents every other node
of the graph}); this holds for \emph{all} $d$ and uses no strong-connectivity/full-rotational hypothesis.
\end{proposition}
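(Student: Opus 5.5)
The plan is to parametrize the unknown row exactly, verify that environment matching imposes no further constraint, and then solve the zero-diagonal constraints in closed form with a Sherman--Morrison identity. This should give the rank statements and the global ``iff'' together, not only a local linearization.

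\emph{Step 0 (``Lemma~0'': matching is automatic).} Let $P'$ be any nonsingular candidate whose rows $j\neq r$ agree with those of $P$, and write its row $r$ as $e_r^\top P+\delta^\top$, so that $P'=P+e_r\delta^\top$. Set $H':=M_0P'^{-1}$ and $B':=I-P'^{-1}$. Then $H'P'=M_0$, so the baseline response agrees. By \Cref{lem:recover}, each targeted response $M_0^{(j)}$, $j\neq r$, is a function only of $M_0e_j$, row $j$ of $P'$, and $P'_{jj}$. All three coincide with the truth, so every environment response agrees. The only remaining legality constraints are therefore zero diagonality, $(P'^{-1})_{jj}=1$ for $j=1,\dots,d$. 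Well-posedness and stability are open conditions and hold near $\delta=0$.

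\emph{Step 1 (exact constraint and the Jacobian).} By Sherman--Morrison,
\[
P'^{-1}=C-\frac{Ce_r\,\delta^\top C}{1+\delta^\top Ce_r}.
\]
Write $c:=Ce_r$, so that $c_r=1$ and $c_k=-B_{kr}$ for $k\neq r$. Since $\diag C=\mathbf 1$, the $d$ constraints become
\[
c_j\,(C^\top\delta)_j=0\quad\text{for all } j,
\]
valid wherever $1+\delta^\top c\neq0$. Linearizing at $\delta=0$ gives the Jacobian $L=-\diag(c)\,C^\top$.

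\emph{Step 2 (determinant, corank and kernel).} The determinant is
\[
\det L=(-1)^d\prod_k c_k\,\det C=(-1)^d(-1)^{d-1}\prod_{k\neq r}B_{kr}\det C=-\Big(\prod_{k\neq r}B_{kr}\Big)\det C.
\]
The adjugate version follows from $\widetilde L=(\det P)L$ and $\det C=(\det P)^{-1}$, which give
\[
\det\widetilde L=(\det P)^d\det L=-(\det P)^{d-1}\prod_{k\neq r}B_{kr}.
\]
Because $C^\top$ is invertible, $\rank L$ equals the number of nonzero $c_k$. Hence $\corank L=\#\{k\neq r:B_{kr}=0\}$, and
\[
\ker L=\{\delta: C^\top\delta\in\operatorname{span}\{e_k:c_k=0\}\}=P^\top\operatorname{span}\{e_k:k\neq r,\,B_{kr}=0\}.
\]

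\emph{Step 3 (globalizing the iff).} If every $c_k\neq0$, the exact equations of Step~1 force $C^\top\delta=0$, hence $\delta=0$, so row $r$ is pinned globally and not merely locally. If instead $c_k=0$ for some $k\neq r$, take $\delta=tP^\top e_k$. Then $C^\top\delta=te_k$, so every equation holds exactly. The denominator is
\[
1+\delta^\top c=1+t\,e_k^\top PCe_r=1+t\,e_k^\top e_r=1,
\]
so this is an exact line of alternatives, not just a tangent direction. For small $t\neq0$, the model $P'_t$ is nonsingular, $\rho(B'_t)<1$ by continuity, and $H'_t=M_0P_t'^{-1}$ has full column rank. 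The change $P'_t=(I+te_re_k^\top)P$ alters row $r$ by a multiple of row $k$ and is non-monomial. With full-column-rank $H$, it is therefore not $\simeq$-equivalent to the truth, which gives non-identification.

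The main obstacle I expect is Step~0. One must check carefully that \Cref{lem:recover}'s formula for $M_0^{(j)}$ (through $D_j$) depends only on the pinned data, so that the untargeted environment adds no hidden constraint on $\delta$. One must also handle the scale and labelling conventions of $\simeq$ so that the collision line is not absorbed into the equivalence. The algebra in Steps~1--3 is routine once the Sherman--Morrison form is in hand.
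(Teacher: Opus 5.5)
Your proposal is correct and follows the paper's proof essentially step for step. The shared steps are:
\begin{itemize}
\item Lemma~0, which you justify through the closed-form $D_j$ of \Cref{lem:recover}; the paper instead uses the identity $(I-B'^{(j)})P'=(I-B^{(j)})P$.
\item The Sherman--Morrison reduction of the diagonal constraints to $c_j(C^\top\delta)_j=0$, with Jacobian $L=-\diag(Ce_r)C^\top$.
\item The same determinant and kernel computation.
\item The same exact collision line $\delta=tP^\top e_k$, which is the paper's family $B'(y)=B+(Ce_r)y^\top$, $H'(y)=H(I-e_ry^\top)$ with $y=te_k$.
\end{itemize}

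You observe that the constraints are exactly equivalent to $\diag(c)C^\top\delta=0$ on the whole domain $1+\delta^\top c\neq0$, not merely to first order. This is what the paper means when it says the cleared adjugate system is exactly linear, so your global iff comes out the same way.
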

\begin{proof}
\emph{Lemma~0 (environment matching automatic).} For any $x$ with $\det P'\neq0$, set $B'=I-P'^{-1}$,
$H'=M_0P'^{-1}$ (same sources). For $e=\doop(j)$, $j\neq r$, the matrix $(I-B^{(j)})P$ has rows $e_i^\top$
($i\neq j$) and row $j$ equal to $P_{j,:}$; the same holds for $(I-B'^{(j)})P'$, and since $j\neq r$ and $P'$
differs from $P$ only in row $r$, $P'_{j,:}=P_{j,:}$. Thus
$K_j:=(I-B'^{(j)})P'=(I-B^{(j)})P$ exactly, whence
$\det(I-B'^{(j)})=\det(I-B^{(j)})\det P/\det P'\neq0$ (well posed) and
$H'(I-B'^{(j)})^{-1}=M_0K_j^{-1}=H(I-B^{(j)})^{-1}$; the observational case is immediate. So the alternative reproduces
every environment for \emph{every} $x$, and the only membership constraints are $B'_{jj}=0$, i.e.
$(P'^{-1})_{jj}=1$.
\emph{Closed form.} Write $P'=P+e_r\delta^\top$, $\delta:=x-P_{r,:}^\top$. By Sherman--Morrison
$P'^{-1}=C-\dfrac{(Ce_r)(\delta^\top C)}{1+\delta^\top Ce_r}$, so $(P'^{-1})_{jj}-1=-\dfrac{(Ce_r)_j(C^\top\delta)_j}{1+\delta^\top Ce_r}$
(using $C_{jj}=1$). Differentiating at $\delta=0$ gives $L=-\diag(Ce_r)C^\top$ exactly; equivalently, in the
cleared (adjugate) form the $(1+\delta^\top Ce_r)$ factor cancels because $C_{jj}=1$, leaving an exactly
linear system with matrix $\widetilde L=(\det P)L$.
\emph{Rank and determinant.} $\det L=(-1)^d\big(\prod_j (Ce_r)_j\big)\det C^\top=(-1)^d(-1)^{d-1}\big(\prod_{k\neq r}B_{kr}\big)\det C=-\big(\prod_{k\neq r}B_{kr}\big)\det C$; the change of variables $y=C^\top\delta$ gives
$L\delta=0\iff (Ce_r)_jy_j=0\ \forall j\iff y\in\operatorname{span}\{e_k:B_{kr}=0\}$ (index $r$ excluded since
$(Ce_r)_r=1$), so $\corank L=\#\{k\neq r:B_{kr}=0\}$ and $\ker L=P^\top\!\operatorname{span}\{e_k:k\neq r,\ B_{kr}=0\}$.
\emph{Genuineness of the collision.} On $\ker L$ the family $B'(y)=B+(Ce_r)y^\top$, $H'(y)=H(I-e_ry^\top)$ is
zero-diagonal ($(Ce_r)_ky_k=-B_{kr}y_k=0$ on the support), keeps every $\det(I-B'^{(j)})$ fixed (so all
environments stay well posed), is admissible ($\rho(B'(y))<1$ on a neighbourhood of $y=0$ by continuity), and
is \emph{not} $\simeq$-equivalent to truth: $\simeq$ forces $H'=H\Pi\Lambda$ with $\Pi\Lambda$ monomial, hence
$I-e_ry^\top=\Pi\Lambda$, but row $r$ of $I-e_ry^\top$ is $e_r^\top-y^\top$, which keeps its diagonal $1$
(as $y_r=0$) and has an extra nonzero at each missing child, so it is monomial only when $y=0$ (here $H$
full column rank is load-bearing). Thus $\corank L>0$ gives a genuine positive-dimensional non-$\simeq$
collision, and $\corank L=0$ pins row $r$ uniquely.
\end{proof}
This is the \emph{unknown/complete-support} statement (the collision family adds edges, so a known support
pattern may exclude the whole flat and identify at $K=d{-}1$ regardless; cf.\ part (c)). The sink construction
is the special case $Be_r=0\Rightarrow Ce_r=e_r\Rightarrow B'=B+e_ry^\top$, a $(d{-}1)$-dimensional collision.

\begin{proposition}[Exact known-support completion map and fibre criterion]\label{prop:knownsupport}
Fix a declared directed support $G=(V,E)$, with $B_{ij}$ allowed to be nonzero exactly when $j\to i\in E$,
and a target set $T$. From the aligned response maps of \Cref{thm:linear}, recover the rows
$\bar P_{j,:}$ for $j\in T$. Let $x$ collect all entries of the un-targeted rows and let $P(x)$ be the matrix
whose target rows equal $\bar P_{j,:}$ and whose remaining rows are $x$. On the open domain where $P(x)$ is
invertible, every target mechanism remains well posed, $B(x):=I-P(x)^{-1}$ is stable, and every declared
present edge is nonzero (with any declared sign), set
\[
 Z_G:=\{(i,j):i\ne j,\ j\to i\notin E\},\qquad
 \mathcal C_{G,T}(x):=
 \bigl((P(x)^{-1})_{ii}-1\bigr)_{i=1}^d
 \oplus\bigl((P(x)^{-1})_{ij}\bigr)_{(i,j)\in Z_G}.
\]
For the aligned population-response data consumed by \Cref{thm:linear}, with the same declared source laws,
the complete known-support legal data fibre is exactly
\[
 \left\{\bigl(H(x),B(x)\bigr):\mathcal C_{G,T}(x)=0,\quad
 H(x):=M_0P(x)^{-1}\right\}\big/{\simeq}.
\]
Consequently, on a smooth quotient stratum on which $D\mathcal C_{G,T}$ has constant rank, local point
identification is equivalent to $D\mathcal C_{G,T}$ having full column rank on the actual quotient tangent.
Global point identification is the separate condition that the entire legal zero-fibre above is one
$\simeq$-class; query identification requires query constancy on that whole fibre. Neither the number of
displayed constraints nor the SCC scalar $\sum_S(|S|-1)$ is a rank or fibre-cardinality theorem.

For example, on the known directed $3$-cycle
\[
 B=\begin{pmatrix}0&0&a\\ b&0&0\\0&c&0\end{pmatrix},\qquad 0<|abc|<1,
\]
targeting node $1$ recovers
$P_{1,:}=(1,ac,a)/(1-abc)$, and therefore
\[
 a=\frac{P_{13}}{P_{11}},\qquad
 c=\frac{P_{12}}{P_{13}},\qquad
 b=\frac{1-P_{11}^{-1}}{ac}.
\]
Thus one target identifies this known support, while $\sum_S(|S|-1)=2$; the latter is not even a universal
lower bound.
\end{proposition}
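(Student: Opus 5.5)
The proof will run in three stages: characterize the complete legal fibre, derive the local rank criterion, and verify the $3$-cycle example.

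\emph{Fibre characterization.} The claim is set equality, so both inclusions are needed. For ``$\subseteq$'', take any legal alternative $(H',B')$ with declared support $G$ that reproduces every aligned response map $M_0^{(e)}$. By \Cref{lem:recover}, applied to the alternative, each targeted row of $P':=(I-B')^{-1}$ is a function of the pair $(M_0,M_0^{(j)})$ alone. These rows therefore coincide with $\bar P_{j,:}$, up to the labelling and scale fixed by $\simeq$. Writing the untargeted rows as $x$ gives $P'=P(x)$ and $H'=M_0P(x)^{-1}$. Zero diagonality of $B'$ is exactly $(P(x)^{-1})_{ii}=1$. Absence of the edge $j\to i$ is exactly $(P(x)^{-1})_{ij}=0$ for $(i,j)\in Z_G$, because off the diagonal $P^{-1}=I-B$ has entries $-B_{ij}$. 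Together these give $\mathcal C_{G,T}(x)=0$, and the remaining legality conditions are the open-domain requirements.

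For ``$\supseteq$'', I will reuse Lemma~0 from the proof of \Cref{prop:completion}, generalized from one untargeted row to several. For $j\in T$, the matrix $(I-B^{(j)})P$ has rows $e_i^\top$ for $i\neq j$ and row $j$ equal to $P_{j,:}$. Since $P(x)$ shares row $j$ with the truth, we get $(I-B(x)^{(j)})P(x)=(I-B^{(j)})P$. Hence every target response, and the baseline response, is reproduced with the same sources, and well-posedness transfers through the determinant identity. Under the declared source laws, reproducing the responses reproduces the complete data. Quotienting by $\simeq$ gives exactly the displayed set.

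\emph{Local and global criteria.} On a smooth quotient stratum the fibre is the zero set of $\mathcal C_{G,T}$, with $\simeq$ factored out. Where $D\mathcal C_{G,T}$ has constant rank, the constant-rank theorem makes the zero set locally a submanifold with tangent space $\ker D\mathcal C_{G,T}$ restricted to the quotient tangent. Local isolation of the truth is therefore equivalent to this kernel being trivial, i.e.\ to full column rank on the actual quotient tangent; this mirrors the $u_Q$ argument of \Cref{thm:interv-id}. The global statements are then the definitions of \Cref{def:infoclass} applied to the fibre just computed. The disclaimer about counting follows because constraints may be dependent or fail to be transverse, which the $3$-cycle example illustrates. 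The main obstacle is handling the quotient carefully: the recovered rows are only defined up to the permutation and scale of $\simeq$. I plan to fix a representative normalization first, namely the unit diagonal of $C$, which pins the scales. I will then check that the residual $\simeq$-action acts freely on the stratum, so that passing to the quotient preserves the rank statements.

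\emph{Example.} For the known $3$-cycle I will compute $P=\operatorname{adj}(C)/\det C$ with $\det C=1-abc$, which gives $P_{1,:}=(1,ac,a)/(1-abc)$. The stated formulas then follow from ratios, together with $P_{11}^{-1}=1-abc$. This shows that the fibre is a single point, given that $a$ and $c$ are nonzero.
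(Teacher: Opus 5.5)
Your proposal is correct and takes essentially the paper's route. You reproduce every target and baseline response via the row identity $(I-B(x)^{(j)})P(x)=(I-B^{(j)})P$, read the zero-diagonal and absent-edge constraints off $P(x)^{-1}=I-B(x)$, use the constant-rank theorem for the local criterion, and invert directly for the $3$-cycle; your appeal to \Cref{lem:recover} for the $\subseteq$ inclusion makes explicit what the paper leaves implicit. Drop the planned normalization and freeness check: smoothness of the quotient stratum is already a hypothesis of the statement, and the unit diagonal of $C$ is one of the constraints in $\mathcal C_{G,T}=0$ and survives latent rescalings $B\mapsto\Lambda^{-1}B\Lambda$, so it cannot pin the $\simeq$ scales — what fixes labels and scales on this fibre is the labelled, calibrated alignment of the response maps.
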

\begin{proof}
For any candidate $P(x)$ sharing every target row, put $C(x)=P(x)^{-1}$ and $H(x)=M_0C(x)$. For a target $j$,
$K_j:=(I-B^{(j)})P$ has every row equal to the corresponding identity row except row $j$, which equals
$P_{j,:}$; the same description holds for $K_j(x):=(I-B(x)^{(j)})P(x)$. Since the target rows agree,
$K_j(x)=K_j$. Hence
\[
 H(x)(I-B(x)^{(j)})^{-1}=M_0K_j^{-1}=H(I-B^{(j)})^{-1},
\]
and the baseline equality is immediate. Conversely, zero diagonal is exactly
$(P(x)^{-1})_{ii}=1$, while every absent edge is exactly $(P(x)^{-1})_{ij}=0$; the remaining support, sign,
stability, invertibility, and well-posedness requirements are precisely the open-domain restrictions stated
above. This proves the fibre identity. The constant-rank theorem gives local fibre dimension
$\dim x-\rank D\mathcal C_{G,T}$ on the quotient stratum, proving the local equivalence; the global statement is
the definition of complete-fibre identification. For the displayed cycle, direct inversion gives the stated row,
and the three ratios recover $(a,b,c)$ exactly.
\end{proof}

The two non-identifiability counterparts give a genuine two-sided bracket (cf.\ the observed-layer hedge).

\paragraph{The rank criterion is query-relative and local; the fibre criterion is global.}\label{par:queryrel}
Use the declared smooth quotient stratum $\Theta$, identifying map $\Phi_{\mathcal I}$, and query $Q$ of
\Cref{thm:interv-id}. At a regular truth $\theta_0$, shrink to a constant-rank coordinate neighbourhood $U$
whose local fibre plaques are connected. The constant-rank theorem gives local coordinates
$\Phi_{\mathcal I}(s,t)=(s,0)$, with the $t$-directions tangent to its connected local fibre plaques. Hence
\[
 \ker D\Phi_{\mathcal I}(\theta)\subseteq\ker DQ(\theta)
 \quad\text{for every }\theta\in U
 \quad\Longleftrightarrow\quad
 Q=q\circ\Phi_{\mathcal I}\ \text{locally on }U,
\]
equivalently $u_Q(\theta;\mathcal I)=0$ throughout $U$. Inclusion only at $\theta_0$ is insufficient, and this
differential screen is not applied across rank-changing, singular, or boundary strata. Global point
identification at $\theta_0$ is the separate exact condition that $Q$ be constant on the entire admissible fibre
$\Phi_{\mathcal I}^{-1}(\Phi_{\mathcal I}(\theta_0))$ after quotienting the declared equivalence: constancy on
each connected component is not enough unless the component values also agree. Thus a finite or
zero-dimensional fibre can still contain several query-distinct points. This is the query-relative,
interventional refinement of identifying-Jacobian reasoning \citep{FoygelDraismaDrton2012}. Full structural
recovery is $Q(B)=B$; lower-dimensional queries may use fewer interventions only when the local kernel condition
holds and the complete global fibre is query-constant. Accordingly, the cheapest design minimizes
$|\mathcal I|$ subject to full-fibre query constancy, using $u_Q=0$ on regular neighbourhoods as a local screen,
not as a pointwise global criterion.

\begin{proposition}[Shift interventions are insufficient under an unknown mixing for $d\ge2$]\label{prop:shift}
Let $d\ge2$. In the ambient unknown-support, unknown-$H$ information class, at the population structural-response level---and
observationally under the calibrated, full-rank, mean-stable sensing conditions of \Cref{thm:linear}---shift
(backShift) interventions, which preserve $B$,
identify only $M_0=H(I-B)^{-1}$ (up to unknown probe scale when the shifts are not calibrated).
For any legal stable zero-diagonal $B'$, setting $H'=M_0(I-B')$ gives $H'(I-B')^{-1}=M_0$; a distinct nearby
$B'$ exists and reproduces the entire shift-response. Thus $B$ is non-identified from shifts alone in that class; the aligned mechanisms
of \Cref{thm:linear} supply sufficient additional information. This is not an ``only mechanism'' theorem across
information classes: known support, known/anchored $H$, or calibration restrictions can shrink or pin the fibre.
(At $d=1$, zero diagonality forces $B=0$, so $B$ is trivially identified even when $H$ is unknown. When $H=I$
is known, $B=I-M_0^{-1}$ is identified---this recovers backShift
\citep{Rothenhausler2015backshift} and the observed-variable case.)
\end{proposition}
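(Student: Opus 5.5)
The proof has three steps: identify what the shift data determine, exhibit an explicit stable alternative that reproduces them, and dispose of the two boundary cases.

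\emph{Step 1: what shifts identify.} Fix a shift environment $e$ with latent offset $\bm\delta_e$. Since $B$ is unchanged, $\V^{(e)}=(I-B)^{-1}(\U+\bm\delta_e)$, so
\[
\E\X^{(e)}-\E\X^{(0)}=H(I-B)^{-1}\bm\delta_e=M_0\bm\delta_e .
\]
This uses the no-direct-sensor-effect and mean-stable sensor-noise conditions of \Cref{thm:linear}. When the offsets are calibrated and span $\R^d$, the probe formula of \Cref{thm:linear} returns $M_0=R\mathsf D^\top(\mathsf D\mathsf D^\top)^{-1}$ exactly. When they are uncalibrated, $M_0$ is returned only up to right multiplication by the unknown probe-scale matrix. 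In either case the structural-response content of every shift environment is the single map $\bm\delta\mapsto M_0\bm\delta$ together with the common source law pushed through $M=M_0\Omega^{1/2}$. No shift environment produces any object that depends on $(H,B)$ except through $M_0$ and $\Omega$.

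\emph{Step 2: the collision.} Take any legal stable zero-diagonal $B'$ and set $H'=M_0(I-B')$ and $\Omega'=\Omega$, keeping the same sources and sensor noise. Then $H'(I-B')^{-1}=M_0$, so every shift response and every environment law $M_0\Omega^{1/2}\bm\eta+\epsilon$ is reproduced. $H'$ has full column rank because $M_0$ does and $I-B'$ is invertible. This is \Cref{thm:gauge}(i-b) read with shifts added, and the fact that $B'$ is arbitrary is exactly its surjectivity statement.

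For $d\ge2$ we pick a distinct nearby $B'$, for instance $B'=B+tE_{12}$ with $E_{12}$ the matrix unit at position $(1,2)$; its diagonal stays zero because $E_{12}$ is off-diagonal. By continuity of $\rho$, $\rho(B')<1$ for small $t$. We must also check that the alternative is not $\simeq$-equivalent to the truth. Equivalence would force $H'=HP_\pi\Lambda$, which means $(I-B)^{-1}(I-B')$ is monomial. This matrix equals $I-t(I-B)^{-1}E_{12}$, whose diagonal is $1+O(t)$ and which has a nonzero off-diagonal entry $-tP_{12}$, or, if $P_{12}=0$, some other nonzero entry in column $2$. Hence it is not monomial for small $t\neq0$. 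This step is the main obstacle. I would settle it cleanly by noting that a monomial matrix close to $I$ must be diagonal, whereas $(I-B)^{-1}E_{12}$ has a nonzero off-diagonal entry because $P_{22}\neq0$ sits in its $(2,2)$ position; more simply, $P$ times $E_{12}$ has column $2$ equal to $Pe_1$, and $Pe_1$ is nonzero off the diagonal position whenever $d\ge2$. I would also mention the hedge factor of $\simeq$ and note that it is trivial here, because $\Omega'=\Omega$ and the same sources are used.

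\emph{Step 3: boundary cases.} At $d=1$, zero diagonality gives $B=0$. With $H=I$ known, $B=I-M_0^{-1}$ is read off directly, which recovers the backShift and observed-variable case.
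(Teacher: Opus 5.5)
Your proof follows the paper's own argument: the mean shift $H(I-B)^{-1}\bm\delta_e=M_0\bm\delta_e$ under the mean-stable, no-direct-effect sensing conditions, the collision $H'=M_0(I-B')$ with a nearby legal $B'$, and the $d=1$ and $H=I$ remarks. Your explicit non-$\simeq$ check goes beyond the paper, which only asks for a distinct $B'$.

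That check reaches the right conclusion, but the bookkeeping is wrong. Column $2$ of $(I-B)^{-1}E_{12}$ is $Pe_1$, whose entries are $P_{i1}$. So neither $P_{12}$ nor $P_{22}$ is relevant, and $P_{11}$ itself can vanish in a cyclic model. The clean argument is as follows. Every other column of $I-tPE_{12}$ is $e_j$, so monomiality would force $Pe_1=ce_2$, i.e.\ $e_1=c(I-B)e_2$. The second coordinate of the right side is $c$ because $B_{22}=0$, so $c=0$ and hence $e_1=0$, a contradiction for every $t\neq0$.
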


\begin{proposition}[Non-Gaussianity collapses the passive hedge after source-mixing acquisition]\label{prop:nongauss}
In the passive (unknown-magnitude) \emph{noiseless linear} regime second moments fix $M^{(e)}$ only up
to right multiplication by $\Ortho(d)$. Under the LiNG condition of \Cref{def:model},
per-environment independent-component analysis fixes $M^{(e)}$ up to permutation$+$scale
\citep{Comon1994ica,Lacerda2008cyclic}; with additive sensor noise, this step instead requires a separately
stated noisy-ICA identifiability theorem for the declared noise law. In either case,
the residual within-source-component rotation $\Ortho(m)$ (the hedge) is admissible \emph{iff} the rotated
sources $Q^\top\bm\eta$ remain independent. An isotropic Gaussian component permits every orthogonal $Q$;
under LiNG (independent nondegenerate components, with at most one Gaussian), ICA uniqueness permits only the
monomial ambiguity. Hence the Gaussian component stays hedged, whereas LiNG collapses $\Ortho(m)$ to
permutation$+$scale. Mechanism interventions then split $H$ from $B$ as in
\Cref{thm:linear} once its required cross-environment response alignment is supplied; independent
per-environment ICA alone does not supply that alignment.
\end{proposition}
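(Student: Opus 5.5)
The plan is to prove \Cref{prop:nongauss} in three stages: the second-moment ambiguity, the ICA reduction, and the failure of alignment.

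\textbf{Stage 1 (second moments).} In the noiseless linear regime $\X^{(e)}=M^{(e)}\bm\eta^{(e)}$ with $\operatorname{cov}(\bm\eta^{(e)})=I$. Hence $\Sigma^{(e)}=M^{(e)}M^{(e)\top}$. If $M'M'^\top=MM^\top$ with $M$ of full column rank, then $M^+M'$ is orthogonal, so $M'=MQ$ with $Q\in\Ortho(d)$. Conversely every $MQ$ reproduces $\Sigma$. Under an isotropic Gaussian source vector this reproduces the whole law, since $Q^\top\bm\eta\eqd\bm\eta$.

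\textbf{Stage 2 (ICA step).} The law of $\X^{(e)}$ equals $M\bm\eta$ and also $MQ\,\bm\eta'$ with $\bm\eta'$ independent. Full column rank of $M$ forces $\bm\eta'=Q^\top\bm\eta$ almost surely. So a rotation $Q$ is an admissible alternative exactly when $Q^\top\bm\eta$ has independent components; this gives the stated ``iff''. Now apply Comon's theorem, i.e.\ the Darmois--Skitovich argument. If $\bm\eta$ has independent nondegenerate components with at most one Gaussian, and $Q^\top\bm\eta$ is also independent, then $Q$ is a signed permutation. Combined with the unit-variance normalisation, this gives $M$ up to permutation and scale, i.e.\ a monomial ambiguity, which is what \citep{Comon1994ica,Lacerda2008cyclic} supply. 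Restricting to a source component gives the collapse of $\Ortho(m)$ there. For an isotropic Gaussian component, independence of $Q^\top\bm\eta_S$ holds for every $Q$, so that component stays hedged. In the noisy case I will not prove anything new; I will state that the same reduction holds conditional on the declared noisy-ICA theorem. That theorem is exactly what delivers uniqueness of $M$ up to monomials.

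\textbf{Stage 3 (alignment and $H/B$ split).} This is the main obstacle: showing that ICA alone does not align environments or separate $H$ from $B$. I would argue by explicit non-uniqueness. Per-environment ICA returns $M^{(e)}\Pi_e\Lambda_e$ with an arbitrary independent monomial for each $e$. These arbitrary monomials let the recovered baseline and target matrices be mismatched, so the input $M_0^{(e)}$ to \Cref{lem:recover} is not determined. Even with a correct common labelling, the baseline environment alone gives only $M=HP\Omega^{1/2}$, and \Cref{thm:gauge}(i-b) already shows that every legal $(B',\Omega')$ is reproduced with $M$ fixed. So ICA cannot split $H$ from $B$ without the aligned mechanism data. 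Once alignment holds, \Cref{thm:linear} applies verbatim.
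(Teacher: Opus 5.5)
Your proposal is correct and follows essentially the same route as the paper. Second moments give the right $\Ortho(d)$ orbit, admissibility of a hedge rotation reduces to independence of $Q^\top\bm\eta$, Comon's theorem collapses it to a signed permutation under LiNG while isotropic Gaussian blocks stay hedged, and arbitrary per-environment monomials $\Pi_e\Lambda_e$ block alignment; your appeal to \Cref{thm:gauge}(i-b) for the $H/B$ split is a small elaboration that the paper makes elsewhere rather than in this proof. The only slip is cosmetic: equality of laws gives $\bm\eta'\eqd Q^\top\bm\eta$, not almost-sure equality, and that distributional equality is all the independence argument needs.
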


\subsection{Nonlinear block-identifiability: a sharp impossibility, a scoped source-block positive, and an open general case}
The next pair separates the nonlinear obstruction from the scoped positive result: first the
isotropy-preserving collisions, then the sufficient score-rank and irreducibility condition.

\begin{theorem}[Diffeomorphic mixing: dimension-scoped isotropy collisions; the general positive is open]\label{thm:block}
Let $\mathcal{L}$ have a diffeomorphic mixing $h$.
\textbf{Negative (the equilibrium hedge acts within \emph{and} across components).}
\emph{(Within)} Let a source block be $S\subset\R^m$ with $m\ge2$, with isotropic-Gaussian sources. If every
labelled available environment preserves the radial/isotropic law needed by the construction (observation and
block-isotropic/radial interventions), fix a nontrivial one-parameter rotation $R_t\in\SO(m)$ and choose smooth
$s$, constant near $0$ for smoothness, such that $r\mapsto R_{s(r)}$ is nonconstant on the radial support. The
\emph{twist} $\sigma:(r,\theta)\mapsto(r,R_{s(r)}\theta)$ of the source polar coordinates
preserves the baseline isotropic-Gaussian law---and, by radiality, every labelled available law---while
changing the within-component factorization, an infinite-dimensional gauge $\supsetneq\Ortho(m)$.
\emph{(Across)} Let two isotropic source blocks be $A\in\R^{m_A}$ and $B\in\R^{m_B}$, with $m_A\ge2$ and
$m_B\ge1$ (equivalently, for two blocks at least one has dimension at least two, and that rotated block is named
$A$). If every labelled available environment preserves the radial/isotropic product law needed below, the
\emph{$B$-dependent rotation}, for a fixed nontrivial one-parameter $R_t\in\SO(m_A)$ and smooth $s$ such that
$b\mapsto R_{s(\|b\|^2)}$ is nonconstant on the support of $B$,
$T:(A,B)\mapsto(R_{s(\|B\|^2)}A,\,B)$ preserves every such law (conditional on $B$ it only rotates the isotropic
$A$) yet \emph{entangles} the components---its first block depends on $B$---so absorbing $T^{-1}$ into $h$ gives
an observationally- and block-interventionally-equivalent representation with a \emph{different component
partition}. Thus under isotropy-preserving interventions neither the within-component factorization
\emph{nor the component partition} is identified; ``interventional faithfulness'' (a nonzero response) does not
exclude these collisions.
\textbf{Positive (linear, or isotropy-breaking; otherwise open).} The linear specialization is fully identified
(\Cref{thm:linear}). For diffeomorphic $h$, an isotropy-\emph{breaking} intervention---e.g.\ a per-coordinate
perfect $\doop$ inside a component---breaks the local isotropy and collapses the gauge in that direction (in
the \emph{linear} specialization this is the rank-at-most-one difference of \Cref{lem:recover}; for general $h$ it is a
local score-gradient constraint, \emph{not} a linear rank-at-most-one difference). But a \emph{general} positive block-identification theorem for diffeomorphic $h$
(recovering the partition and component-wise factors up to permutation from observed interventional laws) is
\emph{not} established here and, by the collisions above, is \emph{false} without further restriction;
block-identification for general diffeomorphic $h$ therefore remains \textbf{open}. Of the viable routes, (i)
full-rank, block-\emph{distinct} interventional score fields establish the \emph{source}-block factorization in
\Cref{thm:blockid-source} below (a sufficient score-rank$+$irreducibility condition; for $d\ge2$ it does not separate
$h$ from $B$, while at $d=1$ zero diagonality fixes $B=0$ and $h=g$; it does not reach non-source components);
routes (ii) a block-\emph{separable} decoder $h=\bigoplus_S h_S$ and
(iii) paired/counterfactual intervention information remain open.
\end{theorem}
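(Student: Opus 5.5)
The plan is to prove the two negative collisions by explicit construction, each time absorbing a law-preserving source diffeomorphism into the decoder. The positive paragraph is then handled by citation plus one short check that a per-coordinate $\doop$ breaks the construction. Throughout, the device is this. If $\sigma$ is a $C^\infty$ diffeomorphism of the source space with $\sigma_\#\Prob^{(e)}=\Prob^{(e)}$ for every labelled available environment $e$, then $h':=h\circ\sigma^{-1}$ with sources $\sigma(\cdot)$ gives $h'(\sigma(S))=h(S)$ pathwise. It therefore reproduces every environment's observed law exactly, and $h'$ is again a diffeomorphism onto the same image. The work is to verify law preservation and smoothness, and to show $\sigma$ is not in the declared gauge.

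\emph{(Within).} Write $S=r\theta$ with $r=\|S\|$ and $\theta\in\mathbb S^{m-1}$, and set $\sigma(S)=R_{s(r)}S$. I would check the following in order.
\begin{enumerate}[leftmargin=1.6em,itemsep=1pt,topsep=1pt]
\item Smoothness. $s$ is smooth on $(0,\infty)$ and constant near $0$, so on a ball around the origin $\sigma$ is a fixed linear rotation. Away from the origin, $r=\|S\|$ is smooth. Hence $\sigma\in C^\infty$, and its inverse $S\mapsto R_{-s(r)}S$ is of the same form.
\item Law preservation. $\sigma$ preserves $r$. Any radial law, meaning density $f(\|S\|)$, has $\theta$ uniform on the sphere independently of $r$. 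Conditional on $r$, $\sigma$ applies one fixed element of $\SO(m)$ to $\theta$, which preserves the uniform measure. So $\sigma_\#$ fixes the baseline isotropic Gaussian and every radial interventional law.
\item Not orthogonal. If $\sigma$ equalled a fixed $Q\in\Ortho(m)$, then $R_{s(r)}=Q$ for all $r$ in the support, contradicting the nonconstancy of $r\mapsto R_{s(r)}$. So the gauge strictly contains $\Ortho(m)$.
\item Infinite-dimensional gauge. Every admissible $s$ gives such a map, and these form an infinite-dimensional family.
\end{enumerate}

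\emph{(Across).} Take $T(a,b)=(R_{s(\|b\|^2)}a,\,b)$. This map is smooth, since $\|b\|^2$ is smooth even at $b=0$, and its inverse uses $-s$. Law preservation comes from disintegrating over $B$. Whenever the conditional law of $A$ given $B=b$ is radial in $a$, which holds for the product radial laws and block-radial reweightings assumed, $T$ acts on that fiber by one fixed rotation and so preserves it. To show the partition changes, I would compute the off-diagonal Jacobian block
\[
\partial_bT_A=2\,s'(\|b\|^2)\,\dot R_{s}\,a\,b^\top .
\]
This block is nonzero wherever $s'\neq0$, $a\neq0$, $b\neq0$, and by the nonconstancy hypothesis that set has positive probability. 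Any block gauge (block permutation composed with blockwise diffeomorphisms) has a Jacobian that is block-diagonal up to permutation. Because the block dimensions may coincide, I also rule out the swap case: then $T_A$ would depend only on $b$, which is false, since $T_A$ depends on $a$ through the invertible $R_s$. Hence $h\circ T^{-1}$ is an equivalent representation whose first block is not a function of the old $A$ alone, so the partition differs. The closing remark on interventional faithfulness follows because these environments can have nonzero responses while still being preserved by $T$.

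\emph{Positive part and main obstacle.} The linear specialization is cited from \Cref{thm:linear}. For isotropy breaking, I would show that a per-coordinate perfect $\doop(S_1\leftarrow W)$ with $W$ not matching the standard-normal marginal yields a non-radial law. A nonconstant twist then fails to preserve it, because for generic $r$ it rotates the distinguished axis. The openness statements are disclaimers and need no proof. I expect the delicate step to be the exact law-preservation claim for every labelled available environment. I would make precise that the hypothesis means each environment law is radial in the rotated block conditional on the remaining coordinates. I would also handle the null set $\{r=0\}$, respectively $\{a=0\}$, where polar coordinates degenerate; there the constancy of $s$ near $0$ keeps the map smooth.
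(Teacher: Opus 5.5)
Your two negative constructions are the paper's own proof written out in more detail. The paper also argues that the twist fixes each radius and rotates the uniform sphere, that $T$ acts, conditionally on $B$, as one fixed rotation of the isotropic $A$, and that absorbing the inverse map into the decoder yields the collision. Your smoothness check, the argument that $\sigma\notin\Ortho(m)$, the explicit off-diagonal Jacobian, and the exclusion of a block swap are details the paper leaves implicit. Two steps need repair.

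\textbf{The Jacobian (minor).} With $R_t=e^{tK}$ and $K\neq0$ skew-symmetric, your block is $2s'(\|b\|^2)\,R_sK\,a\,b^\top$.
\begin{itemize}
\item It vanishes for every $a\in\ker K$, not only at $a=0$. For $m_A\ge3$ this kernel can be nontrivial, and it always is when $m_A$ is odd. The correct nonvanishing set is $\{s'\neq0,\ b\neq0,\ a\notin\ker K\}$. Your conclusion survives, because $\ker K$ is a proper subspace and hence null for the isotropic $A$.
\item Positivity of $\Prob(s'(\|B\|^2)\neq0)$ needs the radial support of $B$ to contain an interval on which $s$ varies. This holds for Gaussian or positive-density blocks. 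Otherwise, argue directly that $R_{s(\|b_1\|^2)}a\neq R_{s(\|b_2\|^2)}a$ for $a$ off the proper subspace $\ker\bigl(R_{s(\|b_1\|^2)}-R_{s(\|b_2\|^2)}\bigr)$.
\end{itemize}

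\textbf{The positive-part check (substantive).} The claim that a nonconstant twist fails to preserve the law of $\doop(S_1\leftarrow W)$ is false for $m\ge3$.
\begin{itemize}
\item \emph{Counterexample.} Suppose $Ke_1=0$, for example $R_t$ rotates a plane orthogonal to $e_1$. Conditional on $S_1$ and on $\|(S_2,\dots,S_m)\|$, the radius $r$ is fixed. The twist then applies one fixed rotation to the isotropic Gaussian vector $(S_2,\dots,S_m)$. So it preserves $\mathcal L(W)\otimes N(0,I_{m-1})$ exactly, even though the twist is nonconstant.
\item \emph{When your claim holds.} ``It rotates the distinguished axis'' is true only when $Ke_1\neq0$. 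Then $\{t:R_te_1=\pm e_1\}$ is discrete, so a nonconstant continuous $s$ avoids it on an open set of radii. For generic $W$, the non-constant conditional law on those spheres is then moved.
\item \emph{Correct statement.} The right claim is a dichotomy: twist directions that move the intervened axis are killed, while those generated inside $e_1^\perp$ survive. This is why the theorem says the gauge collapses only ``in that direction.'' It is also why a single per-coordinate $\doop$ is not an identification result without the rank condition (R) of \Cref{thm:blockid-source}.
\end{itemize}
The paper only asserts the positive paragraph, so you may simply cite it. If you include a check, it must be this dichotomy rather than the blanket claim.
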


\interpretation{With diffeomorphic mixing, if a source block $S\subset\R^m$ has $m\ge2$, isotropic-Gaussian
sources, and labelled available interventions preserving the radial/isotropic law used by the construction, the
displayed twist prevents identification of its within-component factorization. If two source blocks are
$A\in\R^{m_A}$ and $B\in\R^{m_B}$ with $m_A\ge2$, $m_B\ge1$, and their labelled available interventions
preserve the required radial/isotropic product law, the displayed cross-component transformation also prevents
identification of the component partition. Isotropy-breaking interventions impose only a local
score-gradient constraint in the general nonlinear case; the general positive block-identification result
remains open, with the scoped source-block result below.}

\begin{theorem}[A sufficient source-block identification condition: score rank$+$irreducibility (the positive, scoped)]\label{thm:blockid-source}
Read the ECG in the \emph{source frame}: $X=g(Z)$ with $g=h\circ(I-B)^{-1}$ a $C^1$ diffeomorphism onto its
image and $Z=(Z_1,\dots,Z_m)$ the \emph{independent} source-component blocks ($Z_i\in\R^{n_i}$, $C^1$
strictly-positive densities). Environments $E=\bigsqcup_i E_i$ are soft/stochastic single-block interventions:
each $e\in E_i$ reweights block $i$ only, $p_e=p_0\,e^{\rho_e(z_i)}$ with $\rho_e\in C^1$ non-constant. Assume,
for every block $i$: \textup{(R)} $\mathrm{span}\{\nabla\rho_e(z_i):e\in E_i\}=\R^{n_i}$ at a.e.\ $z_i$ (hence
$|E_i|\ge n_i$); and \textup{(I$_i$)} \emph{no} bipartition $E_i=E^1\sqcup E^2$ (both nonempty) makes the tuples
$(\rho_e(Z_i))_{e\in E^1}$ and $(\rho_f(Z_i))_{f\in E^2}$ independent under $p_0$. Call an alternative
$X=g'(Z')$ \emph{admissible} if $g'$ is a $C^1$ diffeomorphism onto its image with independent blocks $Z'_j$,
matched latent dimension $d'=d$ and matched image $g'(\R^{d})=g(\R^{d})$, and each environment reweights a single
alternative block. Then, among admissible alternatives, the source-block
factorization is the \emph{unique finest} representation: every admissible alternative reproducing the
observational and all interventional laws is a \emph{coarsening} (merge) of the source blocks, and any alternative
with the same (maximal) block count agrees with the truth after a \emph{size-respecting block permutation}, with
$g'^{-1}\!\circ g=\bigoplus_i\phi_i$ a direct sum of block diffeomorphisms. The residual gauge is
exactly \emph{size-respecting block permutations} $\times\prod_i\mathrm{Diff}(\R^{n_i})$ (the realization
direction uses $C^2$ block diffeomorphisms, or $C^0$ block densities, to stay within the $C^1$-density class).
\textbf{Scope (what this does \emph{not} do).} It identifies the source blocks of $Z$---equivalently the
composite decoder $g$---up to block-diffeomorphism; for $d\ge2$ it does \emph{not} separate $h$ from $B$
(reweighting environments obey \Cref{prop:shift}); at $d=1$, $B=0$ is fixed while $h=g$ remains identified only
up to the declared block diffeomorphism. It does not identify non-source (downstream) components (whose $V$-factors are not
independent, $V=(I-B)^{-1}U$), nor within-block structure. A \textup{(R)}-deficient score span is \emph{one}
source of collision. More precisely, if the score span has constant rank $k$ on an $n_i$-block and there exist
$q=n_i-k$ pointwise independent, pairwise commuting, complete $C^1$ vector fields $w_1,\ldots,w_q$ with
$w_a\perp\mathrm{span}\{\nabla\rho_e:e\in E_i\}$ and $\mathrm{div}(p_0w_a)=0$, their joint flows generate a
$q$-dimensional measure-preserving $\R^q$ action that preserves every environment law. One qualifying field
gives only a one-parameter flow; rank deficiency alone does not supply the frame, commutation, or completeness.
(The isotropic rotation of \Cref{thm:block} is the $n_i{=}2$, $k{=}1$ witness.) Failure of \textup{(I$_i$)} is a
\emph{separate} obstruction. Thus \textup{(R)}$+$\textup{(I$_i$)} \emph{jointly} is a sufficient source-block identification
condition---\textup{(R)} the nonlinear analogue of the completion corank of \Cref{prop:completion}, and
\textup{(I$_i$)} its irreducibility complement; we prove joint \emph{sufficiency}, not an \emph{iff}
characterization of every failure of identification.
\end{theorem}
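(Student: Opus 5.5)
Compare the truth and an admissible alternative through the transition map $\psi:=g'^{-1}\circ g$. Because both decoders are $C^1$ diffeomorphisms onto the same image and $d'=d$, $\psi$ is a $C^1$ diffeomorphism of $\R^d$, and $Z'=\psi(Z)$ as random variables. Let $q_0$ be the alternative baseline density. Matching the observational law gives $q_0(\psi(z))\,|\det D\psi(z)|=p_0(z)$. Matching environment $e\in E_i$ forces its alternative realization to be a reweighting $q_0\,e^{\rho'_e(z'_{\pi(e)})}$ of a single alternative block $\pi(e)$. Dividing the two densities (the Jacobian cancels) gives the key functional identity
\[
 \rho_e(z_i)+c_e=\rho'_e\bigl(\psi_{\pi(e)}(z)\bigr)\qquad\text{for a.e. } z,
\]
where $c_e$ is a normalizing constant and $\psi_j$ is the block-$j$ output of $\psi$. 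Strict positivity of the densities makes this hold on all of $\R^d$, and continuity makes it hold everywhere.

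\emph{Step 1: each alternative block is a function of exactly one source block.} Group the environments of a true block $i$ by their alternative labels, writing $E_i=\bigsqcup_j E_{i,j}$ with $E_{i,j}=\{e\in E_i:\pi(e)=j\}$. Each tuple $(\rho_e(Z_i))_{e\in E_{i,j}}$ is a function of $Z'_j$, and the $Z'_j$ are independent. So if two of the groups $E_{i,j}$ were nonempty, the corresponding tuples would be independent under $p_0$, contradicting (I$_i$). Hence every environment in $E_i$ maps to one alternative block, $j=\tau(i)$. Now differentiate the key identity in $z$. We get $\nabla_{z_i}\rho_e=D_{z_i}\psi_{\tau(i)}^\top\nabla\rho'_e$, while the derivative with respect to $z_k$, $k\neq i$, is $0=D_{z_k}\psi_{\tau(i)}^\top\nabla\rho'_e$. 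By (R), the left-hand gradients span $\R^{n_i}$, so $D_{z_i}\psi_{\tau(i)}$ has rank $n_i$ and the vectors $\nabla\rho'_e$ span a space of dimension at least $n_i$. I would then argue that this forces the span of the $\nabla\rho'_e$ to be the full row space of $D_z\psi_{\tau(i)}$, which kills $D_{z_k}\psi_{\tau(i)}$ on the span.

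This is the step I expect to be the main obstacle. When $\dim Z'_{\tau(i)}>n_i$, the alternative block may carry directions that no $\rho'_e$ sees, and then the argument above does not close. My fallback is a cleaner invertibility argument. Suppose a single alternative block $j$ receives the environments of several true blocks. Because of (R), the map $z\mapsto(\rho_e(z_i))_{e\in E_i,\,i\in\tau^{-1}(j)}$ is locally injective on each true block. Combined with independence of the true blocks, this should make the blocks in $\tau^{-1}(j)$ jointly a function of $Z'_j$. Since $\psi$ is a bijection, counting dimensions then forces $Z'_j$ to be exactly a function of those merged blocks, with no other dependence.

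\emph{Step 2: conclusions.} Every source block is routed by $\tau$ into one alternative block. The $\tau^{-1}(j)$ partition the source blocks, and by Step 1 each $Z'_j$ is a diffeomorphic function of the blocks in $\tau^{-1}(j)$, so the alternative is a coarsening of the truth. If the block count is maximal, $\tau$ is a bijection. Dimension matching then makes it size-respecting, and $\psi=\bigoplus_i\phi_i$ up to that permutation. For the converse gauge direction, take any size-respecting permutation and $C^2$ block diffeomorphisms $\phi_i$. Transporting each density and each $\rho_e$ through $\phi_i$ keeps the blocks independent and the densities $C^1$, and it reproduces every law. This shows the residual gauge is exactly size-respecting block permutations $\times\prod_i\mathrm{Diff}(\R^{n_i})$.

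\emph{Scope clauses.} These follow from results already stated. The statement that $h$ is not separated from $B$ for $d\ge2$ is \Cref{prop:shift} applied to the composite $g=h\circ(I-B)^{-1}$. The flow-based collision when (R) fails comes from checking two things for each $w_a$. First, $\mathrm{div}(p_0w_a)=0$ makes its flow preserve $p_0$. Second, $w_a\perp\nabla\rho_e$ makes the flow leave each $\rho_e$ invariant, so every reweighted law $p_0e^{\rho_e}$ is preserved as well. Because the fields commute and are complete, their flows combine into an $\R^q$ action.
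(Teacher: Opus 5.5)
Your outline matches the paper's through the grouping step: frame-invariant log-ratios, then (I$_i$) forcing all of $E_i$ onto one alternative block. When three or more groups $E_{i,j}$ are nonempty, you should take the bipartition $E^1=E_{i,j_0}$, $E^2=E_i\setminus E^1$, as the paper does. The (R) step, however, is not closed, and it is the heart of the theorem.

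The target you set in Step 1, ``each alternative block is a function of exactly one source block,'' is false for every genuine coarsening. If $\tau(i)=\tau(k)=j$ with $i\neq k$, then $Z'_j$ legitimately depends on both $Z_i$ and $Z_k$, so $D_{z_k}\psi_{j}\neq0$ and no argument can make it vanish. This is exactly why your forward computation stalls when $n'_{j}>n_i$.

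The fallback does not repair this, for three reasons.
\begin{enumerate}
\item (R) gives only local injectivity of $z_i\mapsto(\rho_e(z_i))_{e\in E_i}$ near a.e.\ point, and that does not make $Z_i$ a function of the score tuple. Take $n_i=1$, $Z_i\sim\mathcal N(0,1)$, and a single environment with $\rho_e(z)=z^2/4-\tfrac12\log2$: (R) holds a.e., (I$_i$) holds vacuously, yet $\rho_e$ is two-to-one.
\item It is unclear what the appeal to independence of the true blocks contributes.
\item The final ``counting dimensions'' presupposes a block-sparsity pattern that you never derive.
\end{enumerate}

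The missing idea is to read the chain rule in the inverse direction, which your own identities already permit. Put $R'_e(z'):=\rho'_e(z'_{\tau(i)})$ on all of $\R^d$, so $\rho_e=R'_e\circ\psi+\text{const}$. Since $D\psi(z)$ is invertible,
\[
 \nabla R'_e(\psi(z))=D\psi(z)^{-\top}\nabla\rho_e(z),\qquad e\in E_i .
\]
The left side is supported on the coordinates of alternative block $\tau(i)$. By (R), the vectors $\nabla\rho_e(z)$ span the block-$i$ coordinate subspace at a.e.\ $z$. Hence $D\psi(z)^{-\top}$ maps block-$i$ coordinates into block-$\tau(i)$ coordinates a.e., and everywhere by continuity.

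Since $D\psi(z)^{-1}=D(\psi^{-1})(\psi(z))$, this says $\partial(\psi^{-1})_k/\partial z'_l\equiv0$ for $k$ in block $i$ and $l$ outside block $\tau(i)$. Matched image makes $\psi^{-1}$ defined on all of $\R^d$, so each \emph{true} block is a function of its single alternative block: $z_i=(\psi^{-1})_i(z'_{\tau(i)})$.

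Invertibility of this block-sparse $D(\psi^{-1})$ then gives $n'_j=\sum_{i\in\tau^{-1}(j)}n_i$. So no alternative block has empty preimage, and $m'\le m$. Moreover $\psi^{-1}$ splits into diffeomorphisms $z'_j\mapsto(z_i)_{i\in\tau^{-1}(j)}$, which is the coarsening, and $m'=m$ forces a size-respecting bijection. This is the paper's ``inverse-Jacobian form.'' Your gauge-realization direction and the scope clauses, including the flow argument, agree with the paper's.
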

\interpretation{Under the theorem's source-frame model with a $C^1$ diffeomorphic composite decoder,
independent source blocks with $C^1$ strictly positive densities, labelled soft single-block reweightings by
non-constant $C^1$ log-density ratios, matched latent dimension and image, and conditions \textup{(R)} and
\textup{(I$_i$)}, every admissible alternative reproducing the observational and all interventional laws is a
coarsening of the source blocks. Any such alternative with the same maximal block count agrees up to a
size-respecting block permutation and block diffeomorphisms. This identifies the source blocks and composite
decoder $g$ only to that gauge; it does not separate $h$ from $B$ for $d\ge2$ or identify downstream or
within-block structure, and the theorem proves sufficiency rather than an \emph{iff} characterization.}
\begin{remark}[Cycles can supply irreducibility excluded by multi-target per-node shifts]\label{rem:cycleshelp}
Condition \textup{(I$_i$)} is the operative and \emph{non}-generic one. A per-node \emph{shift} intervention
$u_j\mapsto u_j+\delta$ on node $j$ has score $\rho_{e_j}(u)=\log q_j(u_j-\delta)-\log q_j(u_j)$---a function of the
single exogenous scalar $u_j$; the $u_j$ are independent, so any per-node shift design with $\ge2$ \emph{distinct}
target nodes in a block \emph{violates} \textup{(I$_i$)} for \emph{every} source law (its scores are separable
across the independent $u_j$). Feedback \emph{can} restore it---but not automatically. For a
\emph{standardized-Gaussian} source SCC ($U\sim\mathcal N(0,I)$) a perfect $\doop(V_j\!\leftarrow\!W_j)$,
$W_j\sim\mathcal N(0,s_j^2)$, has source-frame score $\rho_{\doop(j)}(u)=\tfrac12 u_j^2-\tfrac{1}{2s_j^2}
(P_j\!\cdot\!u)^2$ (up to a constant), gradient $\nabla\rho_{\doop(j)}=H_ju$, $H_j=e_je_j^\top-s_j^{-2}P_j^\top P_j$;
for a \emph{general} source law the score is $\log q_{W_j}(P_j\!\cdot\!u)-\log q_j(u_j)+\mathrm{const}$, \emph{not}
this quadratic. These couple across the block ($\beta=P^\top b_j\neq0$, the \emph{baseline} $P$), \emph{but coupling
is not sufficient}: whether $\{\nabla\rho_e\}$ spans (condition \textup{(R)}) depends on the intervention variances.
For a $2$-cycle $B=\big(\begin{smallmatrix}0&a\\ b&0\end{smallmatrix}\big)$ the pair is rank-deficient \emph{exactly}
at the resonance $s_1^2=s_2^2=c:=(1-ab)^{-1}$ (i.e.\ $s_1=s_2=\sqrt c$)---there
$\rho_{\doop(1)}=\tfrac{a}{b}\,\rho_{\doop(2)}$ and $\mathrm{span}\{\nabla\rho\}$ collapses to a line---and full
rank for every other choice. \Cref{lem:cyclesRI} makes this precise: on a
\emph{strongly-connected standardized-Gaussian} source SCC, a per-node perfect-intervention design supplies
\textup{(R)}$+$\textup{(I$_i$)} at all intervention variances outside a proper measure-zero algebraic set.
\emph{The cycle thus dissolves the equilibrium hedge off a measure-zero set of intervention variances, not
automatically; full rank of the coefficient vectors $\alpha_j=e_j+\beta$ alone does not imply \textup{(R)},
which depends on the actual gradients $H_ju$.} Finite-dimensional calculations on the $2$- and $3$-cycle
(actual gradient rank $+$ a distance-covariance dependence statistic $+$ the resonance witness) corroborate
these instances; they are finite-instance evidence, not a substitute for \Cref{lem:cyclesRI}.
\end{remark}

\begin{lemma}[Gaussian-generic realizability of \textup{(R)}$+$\textup{(I$_i$)} by feedback]\label{lem:cyclesRI}
Let a source SCC of size $m\ge2$ carry standardized-Gaussian sources $U\sim\mathcal N(0,I_m)$ with
\emph{strongly-connected} coupling $B$ whose intervened systems are well-posed (each $I-\bar B^{(j)}$ invertible,
equivalently $P_{jj}\neq0$---an assumption in its own right: it holds if every \emph{post-intervention} map also
satisfies the contraction condition, but is \emph{not} implied by contraction of the baseline alone, as
$\rho(B)<1$ permits $P_{jj}=0$), and equilibrium map $P=(I-B)^{-1}$. Apply one perfect intervention per node, $\doop(V_j\!\leftarrow\!W_j)$, $W_j\sim\mathcal N(0,s_j^2)$,
giving scores $\rho_j(u)=\tfrac12 u_j^2-(P_j\!\cdot\!u)^2/(2s_j^2)$ (up to a constant) and Hessians
$H_j=e_je_j^\top-s_j^{-2}P_j^\top P_j$; each $\rho_j$ is non-constant, as $P_j\not\parallel e_j$ in a
strongly-connected SCC. Then \textup{(R)} $\mathrm{span}\{\nabla\rho_j(u)\}=\R^m$ at a.e.\ $u$ holds for all
$(s_j^2)$ outside a proper algebraic (measure-zero) set of \emph{variance} space, and \textup{(I$_i$)} holds at
\emph{every} positive variance; hence a non-resonant per-node design realizes \textup{(R)}$+$\textup{(I$_i$)}.
\end{lemma}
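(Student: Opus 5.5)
The plan is to treat the two conditions separately. \textup{(R)} is an algebraic genericity statement in the variance parameters $t_j:=s_j^{-2}$. \textup{(I$_i$)} is an exact statement about independence of Gaussian quadratic forms, which I would settle with the Craig--Sakamoto theorem and strong connectivity of $B$. Non-constancy of each $\rho_j$ is already stated in the lemma: $P_j\not\parallel e_j$, so $H_j\neq0$ and $\nabla\rho_j=H_ju\not\equiv0$.

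For \textup{(R)}, form the $m\times m$ matrix $G(u,t)=[H_1u,\dots,H_mu]$ with $H_j=e_je_j^\top-t_jP_j^\top P_j$. Its determinant $F(u,t)=\det G(u,t)$ is a polynomial jointly in $(u,t)$. Expand it in monomials of $u$, writing $F(u,t)=\sum_\alpha c_\alpha(t)u^\alpha$ with polynomial coefficients $c_\alpha$. At $t=0$ we have $G=\diag(u)$, so the coefficient of $u_1\cdots u_m$ equals $1$ there; hence that $c_\alpha$ is a nonzero polynomial in $t$. The exceptional set $\{t:\ c_\alpha(t)=0\ \forall\alpha\}$ is therefore a proper algebraic subset of $\R^m$. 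Its preimage under the diffeomorphism $s_j^2\mapsto s_j^{-2}$ of the positive orthant has Lebesgue measure zero. For every $t$ outside that set, $F(\cdot,t)$ is a nonzero polynomial in $u$, so its zero set is Lebesgue-null. Thus $\{H_ju\}$ spans $\R^m$ for a.e.\ $u$. Note that $t=0$ itself is not an admissible variance; it serves only as the witness that the polynomial is nonzero.

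For \textup{(I$_i$)}, write $\rho_j(U)=U^\top A_jU+\mathrm{const}$ with $A_j=\tfrac12\bigl(e_je_j^\top-t_jP_j^\top P_j\bigr)$ and $U\sim\mathcal N(0,I_m)$. Suppose a bipartition $E^1\sqcup E^2$ made the two score tuples independent. Then every cross pair $(\rho_j,\rho_k)$ with $j\in E^1$, $k\in E^2$ would be independent. By Craig--Sakamoto this gives $A_jA_k=0$, i.e.\ $\operatorname{range}A_k\subseteq\ker A_j$. Since $A_j,A_k$ are symmetric, $A_jA_k=0$ iff $A_kA_j=0$, so the condition is symmetric in the pair. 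Now $\operatorname{range}A_k$ contains both $A_ke_k$ and $A_kP_k^\top$, and both can be written out explicitly.
\begin{itemize}
\item $A_je_k=-\tfrac12t_jP_{jk}P_j^\top$. Because $P$ is invertible, $P_j\neq0$, so this forces $P_{jk}=0$.
\item $A_jP_k^\top=\tfrac12\bigl(P_{kj}e_j-t_j(P_j\!\cdot\!P_k)P_j^\top\bigr)$. Because $P_j\not\parallel e_j$, this forces $P_{kj}=0$ and $P_j\!\cdot\!P_k=0$.
\end{itemize}
To conclude $A_jP_k^\top=0$ from $A_jA_k=0$, I would note that $A_k$ restricted to $\operatorname{span}\{e_k,P_k^\top\}$ is onto that span whenever it is injective there. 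If it is not, I would handle the degenerate case by checking $A_kA_j=0$ symmetrically. This small case analysis is routine but needs care.

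Granting $P_{jk}=P_{kj}=0$ for all cross pairs, $P$ is block-diagonal after a permutation. Then so is $C=P^{-1}$, and hence so is $B=I-C$. This contradicts strong connectivity. The argument uses only $t_j>0$, so \textup{(I$_i$)} holds at every positive variance.

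The main obstacle I expect is the Craig step. I must make sure that independence of tuples really reduces to pairwise orthogonality $A_jA_k=0$. I only need the necessary direction, which goes through pairwise independence and the classical two-form Craig--Sakamoto theorem for standard Gaussians; I would cite that theorem rather than reprove it. I must also extract $P_{jk}=P_{kj}=0$ cleanly from $A_jA_k=0$ without an unjustified rank assumption on $A_k$. If the range argument becomes awkward, I would instead compute $\operatorname{Cov}(\rho_j,\rho_k)=2\operatorname{tr}(A_jA_k)$ together with the higher joint cumulants $\propto\operatorname{tr}(A_jA_kA_j\cdots)$. For symmetric matrices, vanishing of $\operatorname{tr}\bigl((A_jA_k)(A_jA_k)^\top\bigr)$ already gives $A_jA_k=0$.
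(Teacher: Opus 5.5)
Your proposal is correct and follows the paper's proof essentially step for step. For (R) you use the same determinant polynomial $F(u,t)$ with the $t=0$ witness $F(u,0)=\prod_j u_j$; for (I$_i$) you use Craig--Sakamoto, then block-diagonality of $P$ (hence of $B$), which contradicts strong connectivity.

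The one difference is cosmetic. The paper expands $H_jH_k=e_j(-t_kP_{kj}P_k)+P_j^\top(t_jt_k\langle P_j,P_k\rangle P_k-t_jP_{jk}e_k^\top)$ and reads off both coefficients, whereas you argue via $\operatorname{range}A_k\subseteq\ker A_j$. The degenerate case you worry about never occurs: $\ker A_k=\{x:x_k=0,\ P_k\cdot x=0\}$ has dimension $m-2$, so the symmetric $A_k$ has range exactly $\operatorname{span}\{e_k,P_k^\top\}$.
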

\emph{Proof.} \emph{(R).} With $t_j:=s_j^{-2}$, column $j$ of the gradient matrix is $H_ju=u_je_j-t_j(P_j\!\cdot\!u)
P_j^\top$, affine in $t_j$, so $F(u,t):=\det[H_1u,\dots,H_mu]$ is a polynomial in $(u,t)$ with $F(u,0)=
\det[u_1e_1,\dots,u_me_m]=\prod_j u_j$. Hence the coefficient of $\prod_j u_j$ in $F$ equals $1$ at $t=0$ and is a
\emph{nonzero} polynomial in $t$; the variances for which $F(\cdot,t)\equiv0$ in $u$ lie in its zero set, a proper
algebraic (measure-zero) subset of variance space (clear denominators to make it polynomial in the $s_j^2$). Off
it, $F(\cdot,t)$ is a nonzero degree-$m$ polynomial in $u$, so $\mathrm{span}\{H_ju\}=\R^m$ at a.e.\ $u$.
\emph{(I$_i$).} Each $\rho_j$ is a quadratic form (plus constant) with matrix $\tfrac12 H_j$ and \emph{no} linear
term; by the Craig--Sakamoto theorem \citep{Li2000craigsakamoto}, under $U\sim\mathcal N(0,I)$,
$\rho_j\perp\rho_k$ iff $H_jH_k=0$. For
$j\neq k$, writing $G_{jk}=\langle P_j,P_k\rangle$, expansion gives
$H_jH_k=e_j\otimes(-t_kP_{kj}P_k)+P_j^\top\otimes(t_jt_kG_{jk}P_k-t_jP_{jk}e_k^\top)$, whose column space lies in
$\mathrm{span}\{e_j,P_j^\top\}$; strong connectivity makes those two generators independent ($P_j\not\parallel
e_j$), so $H_jH_k=0$ forces $P_{kj}=0$ and---as $P_k\not\parallel e_k$---also $G_{jk}=P_{jk}=0$. Thus for $j\neq k$,
$\rho_j\perp\rho_k\iff P_{jk}=P_{kj}=\langle P_j,P_k\rangle=0$, at \emph{every} positive variance. Independence
across a bipartition $E^1\sqcup E^2$ would then force $P_{jk}=P_{kj}=0$ for all $j\in E^1,k\in E^2$, i.e.\ $P$---%
hence $I-B$, hence $B$---block-diagonal along $E^1|E^2$, so no edge crosses the partition: not strongly connected,
a contradiction. The dependence graph (edge $j\!\sim\!k$ when $H_jH_k\neq0$) is therefore connected, and no
bipartition yields independent score tuples: \textup{(I$_i$)} holds. $\qed$

\subsection{Special-case recovery}
\begin{theorem}[Sanity / special cases]\label{thm:special2}
\textbf{(a)} If the latent graph is acyclic, \Cref{thm:linear} reduces to linear interventional CRL
\citep{Squires2023linear} and, for diffeomorphic $h$, \Cref{thm:block} reduces to nonparametric
interventional CRL \citep{vonKugelgen2023nonparam} with no residual hedge (singleton components); the nonlinear
within-block twist requires $m\ge2$, and the cross-block collision requires a rotated block $A$ with
$m_A\ge2$ and a second block $B$ with $m_B\ge1$.
\textbf{(b)} A single full-rotational source component of size $m\ge2$ with no intervention is exactly the observed-layer
equilibrium hedge (\Cref{thm:hedge}): $(h,B)$ is non-identified up to the signed-$\SO(m)$ gauge. \textbf{(c)} The two-prosumer grid market
(price$\rightleftarrows$demand) and an attenuated local behavior--disease SIS response loop
(prevalence$\rightleftarrows$caution) are latent equilibrium generative models. For the latter, let
$f(b)=1-\gamma/[\beta_0(1-b)]$, $i^\star=b^\star=1-\sqrt{\gamma/\beta_0}$ for $\beta_0>\gamma>0$, and
\[
 i=i^\star+\lambda_i\{f(b)-i^\star\}+u_i,\qquad
 b=i^\star+\lambda_b(i-i^\star)+u_b.
\]
Exp.~14 uses centered local deviations in one common declared scale with $(\lambda_i,\lambda_b)=(0.55,0.45)$, whose Jacobian
is $B=\big[\begin{smallmatrix}0&-0.55\\0.45&0\end{smallmatrix}\big]$ because $f'(b^\star)=-1$; its spectral
radius is $\sqrt{0.2475}<1$. For both generators, \Cref{thm:linear}
recovers their cyclic latent state from aligned sensor-response maps with $K=d-1$ and predicts the response map
of a genuinely unobserved mechanism target (\Cref{sec:exp}, Exp.~14).
\end{theorem}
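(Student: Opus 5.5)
The proof treats the three parts separately, each as a short reduction to an earlier result or a direct construction.

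\emph{Part (a), acyclic case.} The plan is to observe that an acyclic latent graph has only singleton SCCs. With zero diagonal, none of these is a cyclic equilibrium component, so the clause of \Cref{def:equiv} carrying $\SO(|S|)$ factors is vacuous and $\simeq$ reduces to permutation and scaling. Ordering the nodes topologically makes $B$ strictly lower triangular, so $P=(I-B)^{-1}$ is unit lower triangular and $P_{jj}=1\neq0$; every single-target $\doop$ is therefore automatically well posed. \Cref{lem:recover} and \Cref{thm:linear}(a) then recover $(H,B)$ up to permutation and scaling from one intervention per node, which is the statement of \citet{Squires2023linear}. For diffeomorphic $h$, both collisions in \Cref{thm:block} need a block of dimension at least two carrying a nontrivial $R_t\in\SO(m)$. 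That block is the rotated block in the within case and the block $A$ with $m_A\ge2$ in the across case. Since every block here is a scalar, no nontrivial within-block twist exists, and the remaining setting is the nonparametric acyclic one of \citet{vonKugelgen2023nonparam}. I will state this carefully as a reduction of hypotheses, not a reproof of their theorem.

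\emph{Part (b), single full-rotational source component.} I will set $d=m$ with no interventions, so the data are the single passive law. Second moments determine $M$ only up to right multiplication by $\Ortho(m)$, and the admissible positive-variance branch is signed $\SO(m)$ by \Cref{thm:gauge}(ii). This is exactly the hedge of \Cref{thm:hedge} once we take $H$ as the identity frame (\Cref{rem:transverse}: knowing $H$ removes the frame factor and leaves the source factor). Non-identification of the effect follows from \Cref{thm:hedge}'s generic witness. The point to state carefully is this: if $H$ is also unknown, \Cref{thm:gauge}(i) contributes the additional legal chart on top of the rotation. So "non-identified up to the signed-$\SO(m)$ gauge" should be read as the $H$-fixing residue, in line with \Cref{rem:transverse}. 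I will phrase (b) in that sense.

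\emph{Part (c), the two examples.} The work here is verification.
\begin{enumerate}
\item For the SIS loop, I will check that $f(b^\star)=i^\star$. Substituting $1-b^\star=\sqrt{\gamma/\beta_0}$ gives $f(b^\star)=1-\sqrt{\gamma/\beta_0}=i^\star$, so $(i^\star,b^\star)$ is a fixed point at $u=0$.
\item Differentiating gives $f'(b)=-\gamma/[\beta_0(1-b)^2]$, which equals $-1$ at $b^\star$. The local Jacobian is therefore $\bigl[\begin{smallmatrix}0&-\lambda_i\\ \lambda_b&0\end{smallmatrix}\bigr]$, with spectral radius $\sqrt{\lambda_i\lambda_b}=\sqrt{0.2475}<1$.
\item Contraction on a neighbourhood then gives a regular branch by \Cref{thm:exist} and \Cref{thm:regbranch}.
\item For $d=2$ with $B_{12},B_{21}\neq0$, each node directly parents the other. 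Leaving either node untargeted satisfies \Cref{prop:completion} with $\corank L=0$, so $K=d-1=1$ suffices by \Cref{thm:linear}(a).
\item Once $(H,B)$ is identified, the unobserved target's response $H(I-B^{(r)})^{-1}$ is predicted.
\end{enumerate}
The grid market is handled the same way, using its declared nonzero reciprocal coupling.

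The main obstacle is the nonlinear half of (a). It requires matching the hypotheses precisely to an external theorem, for example perfect versus soft interventions and paired domains; I will phrase this as "reduces to that setting", not as proving their result.
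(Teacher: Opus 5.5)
Your proposal is correct and follows the paper's proof part by part. In (a) you use unit-triangular $P$ and singleton components to reduce \Cref{thm:linear} to \citet{Squires2023linear}; in (b) you combine \Cref{thm:hedge} with \Cref{thm:gauge}(ii); and in (c) you compute $f'(b^\star)=-1$ and $\rho(B)=\sqrt{0.2475}$, then apply \Cref{prop:completion} with each node parenting the other.

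The one difference in emphasis is in how the cross-block collision is dismissed in (a). The paper rules it out through the isotropy-breaking perfect interventions of \citet{vonKugelgen2023nonparam}, not through block dimension alone. That is the sturdier reason, since scalar isotropic Gaussian components could otherwise be grouped into a rotatable block. Your reading of (b) as the $H$-fixing residue is a sound sharpening consistent with \Cref{rem:transverse}.
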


\section{Algorithms}
\label{sec:algo}
\textbf{Testing ECG-separation.} Compute SCCs (Tarjan, $O(n+m)$); evaluate $\sigma$-separation via the
acyclification of \citet{Forre2018} followed by $m$-separation. \textbf{Computing interventional equilibria.}
After a $\doop(\cdot)$ edit, recompute the SCCs and proceed along the condensation DAG: evaluate acyclic
singletons directly and re-solve cyclic components by the Picard best-response iteration of \Cref{thm:exist}.
On a convex normed state space, relaxed updates $x^{k+1}=(1-\alpha)x^k+\alpha T_S(x^k)$,
$0<\alpha\le1$, are also contractions, with modulus $1-\alpha(1-q_S)$.
\textbf{Game specification test.} Given an identified $B$, test $D$-symmetrizability by checking sign-symmetry
($B_{ij}B_{ji}>0$ on the support) and solving $\log d_i-\log d_j=\log|B_{ji}/B_{ij}|$ for the cycle
(Kolmogorov) residual (\Cref{thm:game}). A spanning forest leaves one independent residual for each support
chord, exactly $\beta_1(G)$ in total; a forest has none, whereas a triangle-free graph may still have them. A
sign violation or a nonzero cycle-basis residual \emph{refutes} the potential-game hypothesis.

\Cref{lem:recover} \emph{is} the population estimator; the finite-sample twin route is: (1) within each
environment, estimate the baseline-to-probe mean differences $R_e$ using the labelled calibrated design
$\mathsf D$ of \Cref{thm:linear}, verify $\rank\mathsf D=d$, and set
$\widehat M_0^{(e)}=\widehat R_e\mathsf D^\top(\mathsf D\mathsf D^\top)^{-1}$. This step requires the stated
probe-arm-invariant (or known-corrected) sensor mean and no direct actuator-to-sensor effect; for
$\mathsf D=\Delta I_d$, a shift $\Delta$ on node $k$ moves $\E[\X]$ by
$\Delta M_0^{(e)}e_k$. (2) Form the rank-at-most-one differences $D_j$ and read off each row of $P$ by
\Cref{lem:recover}. (3) Return $B=I-P^{-1}$, $H=M_0P^{-1}$, and
$\widehat\V=\widehat H^{+}\X$. With general $q$, forming the response maps costs
$O((K{+}1)pqd)$ after the fixed right inverse is formed, and the remaining recovery costs $O(Kd^2p)$ plus the
displayed matrix solves (for $q=d$ the acquisition term is again $O((K{+}1)d^2p)$). Passive per-environment ICA
(\Cref{prop:nongauss}) may replace (1) only in the noiseless submodel (or under a separate noisy-ICA theorem for
the declared noise law) and when a separate joint multi-environment theorem or anchors fix a common component
labelling, relative column scales, and the structural source scales; independent ICA plus an outer-product signature
is not itself such an alignment theorem. Once the $M_0^{(e)}$ are aligned, the target set can
be \emph{unknown}: the left factor of $D_j$ equals column $j$ of $M_0$, so each intervened node is identified by
matching.

\section{Experiments}
\label{sec:exp}
The synthetic and calibrated designs for Exp.~1--14 are described below; additional computational information
appears in \Cref{sec:computational}. Experiments are
numbered \emph{Exp.~1}--\emph{Exp.~14}
throughout; unless noted, each reports means with $95\%$ intervals over multiple seeds. Exp.~1--6 corroborate
named finite instances of separation soundness, interventional capability, and the market/epidemic case studies;
Exp.~7--14 form the latent-identifiability sweep.

\subsection{Soundness of the separation criterion}
\textbf{Exp.~1.} \emph{Goal.} Evaluate the finite-instance soundness of ECG/$\sigma$-separation and contrast it
with SCC-blind $d$-separation. \emph{Setup.} We draw $300$ distinct random linear-Gaussian SCMs ($5$--$7$
nodes, randomized SCC count/size; $153$ cyclic and $147$ acyclic), rescale them to
contraction-stable, and enumerate all $>\!84{,}000$ separation triples. Ground truth is the population
covariance from $\Sigma_{\V}=(I-B)^{-1}\Sigma_{\U}(I-B)^{-\top}$. For queried sets $A$ and $B$ conditional
on $C$, we test the $A$-by-$B$ cross-block of the conditional-covariance Schur complement; in this Gaussian
experiment, a zero cross-block is equivalent to conditional independence, not to the full Schur-complement
matrix being zero. A separate randomized linear run makes $34{,}068$ conditional-independence checks.
Distinctly, an exact-law run evaluates $3{,}000$ randomly generated uniquely solvable binary models and the
model-incompleteness counterexample of
\Cref{prop:complete}.
\emph{Key result.} Across all $300$ graphs, ECG/$\sigma$-separation makes \emph{zero} false CIs (precision
$1.00$; \Cref{thm:sound}), whereas $d$-separation errs on $27.3\%$ of graphs (mean false-CI rate $2.1\%$,
precision $0.96$); on the $147$ acyclic graphs the two coincide (\Cref{thm:special}a). The separate randomized
linear run reports $0$ violations. \emph{Theoretical implication.} Together with the named exact-law examples,
these finite calculations corroborate but do not prove the general results.

\begin{table}[t]\centering\footnotesize
\caption{Exp.~1 (separation soundness): $300$ random linear-Gaussian graphs---$153$ cyclic and $147$
acyclic---and $>\!84{,}000$ separation triples. ECG/$\sigma$-separation makes no false
conditional-independence claims, whereas SCC-blind $d$-separation does. The table reports graph-level
incidence, the mean and $[2.5\text{th},97.5\text{th}]$ percentile range of graph-wise false-CI rates, and
pooled precision.}
\label{tab:exp1}
\begin{tabular}{lccc}
\toprule
Criterion & Graphs with $\ge1$ false CI & \shortstack{Graph-wise false-CI rate\\mean [2.5th, 97.5th pct.]} & Pooled precision \\
\midrule
$d$-separation (SCC-blind) & $27.3\%$ & $0.021\ [0,\,0.148]$ & $0.957$ \\
\textbf{ECG / $\sigma$-separation} & $\mathbf{0\%}$ & $\mathbf{0.000}$ & $\mathbf{1.000}$ \\
\bottomrule
\end{tabular}
\end{table}

\subsection{Interventional accuracy and a fair capability test}
\textbf{Exp.~2.} \emph{Goal.} Isolate intervention-awareness and then test it under a same-information
comparison. \emph{Setup.} In the oracle-$B$ setting, a regulator re-wires a quarter of the couplings in an
$N{=}20$ linear-quadratic network game, and the post-intervention $B'$ is given to the ECG. In the fair
setting, both methods see the \emph{same} $K$ interventional environments for $N{=}8$; the ECG must estimate
$B$ by ridge regression of $\bar\V_e$ on $\mu_e$. \emph{Key result.} Given $B'$, the ECG re-solve is exact,
while a learned $\U\!\to\!\V$ map errs by $1.52$ (empirical $2.5$th--$97.5$th percentile range
$[0.49,3.06]$ across networks). Once $K\gtrsim N$ in the fair
setting, the ECG identifies $B$, extrapolates outside the training box (relative error
$0.31\!\to\!0.04$), and answers a new intervention type ($\doop$ remove agent; $0.22\!\to\!0.04$), whereas
an assumption-free k-NN over $\mu\!\to\!\bar\V$ remains at approximately $0.8$--$1.0$. At $K<N$, the
experiment's $K$-by-$N$ shift-design matrix is rank deficient and the structure is under-identified; although
the realized ECG shift error $0.77$ is below k-NN's $1.02$, this finite-run difference is not an
identification-based advantage. \emph{Theoretical implication.} The
oracle result isolates capability rather than a fair contest. In the fair setting, a correctly specified
linear baseline ties on shift queries; the ECG's edge is over assumption-free learners and on new intervention
types. The below-threshold failure here is attributed to the experiment's rank-deficient shift design, not to a
general theorem claim.

\begin{table}[t]\centering\footnotesize
\caption{Exp.~2b (fair same-data contest; $N{=}8$, $n{=}400$ per environment, $15$ seeds): relative error
as $K$ crosses the identification threshold. ECG shift and new-intervention-type errors fall sharply for
$K\ge N$, while the assumption-free k-NN remains inaccurate; the table also reports $\widehat B$ estimation
error and marks the $K<N$ case as under-identified because the $K$-by-$N$ shift-design matrix is rank
deficient.}
\label{tab:exp5}
\begin{tabular}{lcccc}
\toprule
& $K{=}4$ ($<\!N$) & $K{=}8$ ($=\!N$) & $K{=}16$ ($>\!N$) & $K{=}32$ ($\gg\!N$) \\
\midrule
ECG, shift extrapolation & $0.77$ & $0.31$ & $0.077$ & $\mathbf{0.039}$ \\
Flexible learner (k-NN), same data & $1.02$ & $0.97$ & $0.88$ & $0.81$ \\
ECG, new intervention type ($\doop$ remove) & $0.82$ & $0.22$ & $0.087$ & $\mathbf{0.041}$ \\
$\hat B$ relative error (estimation) & under-id.\ & $9.6$ & $0.27$ & $0.15$ \\
\bottomrule
\end{tabular}
\end{table}

\subsection{Fixed-cohort misspecification profile for the linear theory}
\textbf{Exp.~3.} \emph{Goal.} Describe finite-grid misspecification behavior on a fixed common cohort.
\emph{Setup.} Generic completeness for linear trek separation is restricted to Gaussian conditional
independence, or to zero partial covariance under independent finite-variance noise. We let the true DGP be a
nonlinear cyclic equilibrium with quadratic best response $\phi_\alpha(x)=x+\alpha x^2$, fit the linear ECG
from $K=32$ interventional environments, evaluate all $20$ seeds on a declared ten-value grid, and report
new-intervention-type ($\doop$ remove) and $\widehat B$ errors at seven selected values of $\alpha$. A
seed--$\alpha$ run is strictly complete when all $33$ required
fixed-point solves meet the implemented successive-iterate criterion $<10^{-12}$ before $4{,}000$ iterations
and none triggers the nonfinite or $10^8$-norm rejection. Of $20$ attempted seeds, $14$ strictly completed at
every reported $\alpha$; all reported error summaries use this fixed common seven-value cohort.
\emph{Key result.}
Within these $14$ seeds, both mean errors increase monotonically at the sampled grid points. The
new-intervention-type means at $\alpha=.4$ and $.6$, respectively $0.14919\pm0.03054$ and
$0.21548\pm0.04492$ (mean $\pm1.96$ Monte Carlo SE), straddle the declared $.20$ descriptive tolerance at
those evaluated points. Strict-completion counts among all $20$ attempted seeds are reported separately.
At $\alpha=.6$, downstream code accepts $18/20$ outputs, but one is a cap-returned vector and therefore is not
strict, giving $17/20$ here.
\emph{Theoretical implication.} The complete-case summaries and Monte Carlo error bars are conditional on the
selected all-grid-completer cohort. They do not identify a continuous crossing, describe the failed seeds or
full attempted population, establish an advantage or smoothness, diagnose equilibrium nonexistence, or define a
population threshold.

\begin{table}[t]\centering\footnotesize
\caption{Exp.~3 (fixed-cohort misspecification): errors at seven selected values of $\alpha$ for the fixed common cohort
of $14$ seeds that strictly completed at every reported value, reported as mean $\pm1.96$ Monte Carlo SE.
The final column separately gives strict-completion counts among all $20$ attempted seeds. Within the selected
cohort, both mean errors increase monotonically over the sampled grid; the $.4$ and $.6$ new-type means
straddle the declared $.20$ descriptive tolerance at those evaluated points.}
\label{tab:exp6}
\begin{tabular}{cccc}
\toprule
$\alpha$ & \shortstack{new-type relative error\\mean $\pm1.96$ MC SE} &
\shortstack{$\widehat B$ relative error\\mean $\pm1.96$ MC SE} & \shortstack{strict completions\\among $20$ attempts} \\
\midrule
$0$   & $0.04576\pm0.00758$ & $0.14375\pm0.01155$ & $20/20$ \\
$0.1$ & $0.05677\pm0.00924$ & $0.15700\pm0.01649$ & $20/20$ \\
$0.2$ & $0.08456\pm0.01496$ & $0.19097\pm0.02475$ & $20/20$ \\
$0.3$ & $0.11632\pm0.02274$ & $0.23760\pm0.03539$ & $18/20$ \\
$0.4$ & $0.14919\pm0.03054$ & $0.29205\pm0.04816$ & $18/20$ \\
$0.6$ & $0.21548\pm0.04492$ & $0.41683\pm0.08087$ & $17/20$ \\
$1.0$ & $0.34967\pm0.06965$ & $0.71827\pm0.17357$ & $14/20$ \\
\bottomrule
\end{tabular}
\end{table}

\subsection{The game layer is testable and gauge-reducing}
\textbf{Exp.~4.} \emph{Goal.} Instantiate the falsifiable game restriction and the supported identification
scope of \Cref{thm:game}. \emph{Setup.} For test power, we compare potential-game $B$, generic cyclic $B$,
and sign-symmetric but non-Kolmogorov $B$. For identification, the tested labelled two-node subclass combines
exact potentiality with equal own curvatures, hence $B=B^\top$, diagonal positive disturbances, and
contraction-stability selection. We also examine complete-support, regular cases with $m=3,4$ and a tested
equal-codimension non-game restriction given by a known coefficient ratio. \emph{Key result.} The
$D$-symmetrizability/Kolmogorov residual is $\le8\times10^{-16}$ for potential-game $B$. For generic cyclic
$B$, the reported $1.0$ is the implementation's sign/support-failure sentinel rather than a Kolmogorov
residual. Conditional on complete reciprocal sign-symmetric support, the actual Kolmogorov residual medians
are $0.34$ ($m=3$) and $0.75$ ($m=4$).
In the two-node subclass with nonzero cross-covariance, the covariance-compatible symmetric fibre is the
reciprocal pair $\{b,1/b\}=\{0.40,2.50\}$; strict contraction retains $0.40$ and identifies the reported
effect within this subclass (numerical symmetric-gauge width $1.7\times10^{-16}$), versus the generic hedge's
$[-1.0,1.1]$. With zero cross-covariance the compatible solution is $b=0$, not a reciprocal pair. In the
tested complete-support, regular $m=3,4$ cases, the gauge shrinks from $m(m-1)/2$ to $m-1$ (constraint ranks
$1,3$). The tested non-game restriction also identifies the effect (width $2.5\times10^{-16}$), and direct
computer algebra gives $DB-(DB)^\top=0$. \emph{Theoretical implication.} On the tested complete reciprocal
sign-symmetric supports at $m=3,4$, a nonzero cycle-basis residual---not sign symmetry alone---provides
rejection. The cycle test is nonvacuous only when $\beta_1(G)>0$; on forests, including reciprocal $m=2$
support, only support/sign violations can refute the restriction.

\subsection{Market and epidemic case studies}
\textbf{Exp.~5 (Cournot electricity market).} \emph{Goal.} Illustrate structure-based prediction of an unseen
cost intervention in a calibrated Cournot market. \emph{Setup.} We anchor a linear wholesale-power demand
curve to a short-run elasticity of $-0.30$ at a $\$40$/MWh, $300$\,MWh reference point and use heterogeneous
merit-order marginal costs $\{20,25,30,35,45\}$\,\$/MWh \citep{BorensteinBushnell1999}. The resulting Cournot
equilibrium clears at approximately $\$55$/MWh (elasticity approximately $-0.46$ there), so the reference
anchor pins the demand curve, not the equilibrium. With linear demand $P(Q)=A-bQ$ and constant marginal cost,
$q_i=(A-c_i)/(2b)-\tfrac12\sum_{j\neq i}q_j$: an ECG with $B_{ij}=-\tfrac12$ and a potential game. Symmetric
interaction implies that $B$ is symmetrizable, so the Exp.~4 specification test fires; $\det(I-B)\neq0$ gives
a unique Nash even though $\rho(B)=(N-1)/2>1$ for $N\ge4$, illustrating \Cref{thm:exist}. We hold out the
carbon tax $\doop(c_i{\leftarrow}c_i{+}\tau e_i)$ with $\tau=30$/ton and emission intensities $e_i$.
\emph{Key result.} Because total output and price depend on aggregate marginal cost $\sum_i c_i$, while output
shares depend on the cost vector, $\sum_i\tau e_i\neq0$ makes the tax lower total output, raise price, and
reshuffle the merit order. Total emissions fall $194.6\!\to\!151.4$ ($-22.2\%$), with both the
aggregate-output and merit-order channels contributing. The ECG re-solves the Nash equilibrium exactly without
cost-varying data (per-firm output MAE $=0$ to machine precision); a cost-blind correlational twin trained only
on demand shocks predicts essentially no reshuffle (MAE $13.5$ MWh), while a cost-aware learner given
cost-varying training data also recovers it (MAE approximately $10^{-13}$). \emph{Theoretical implication.}
This is an illustrative calibrated simulation, not real data: the functional form and constants are realistic,
but the equilibria are simulated. Its value is structure in lieu of cost-varying data, the capability quantified
in Exp.~2, not superiority over a fairly trained learner.

\begin{figure}[t]\centering
\includegraphics[width=0.42\textwidth]{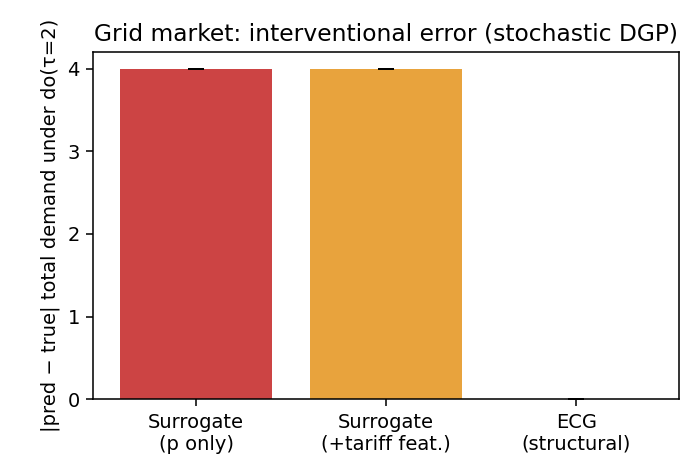}\hfill
\includegraphics[width=0.55\textwidth]{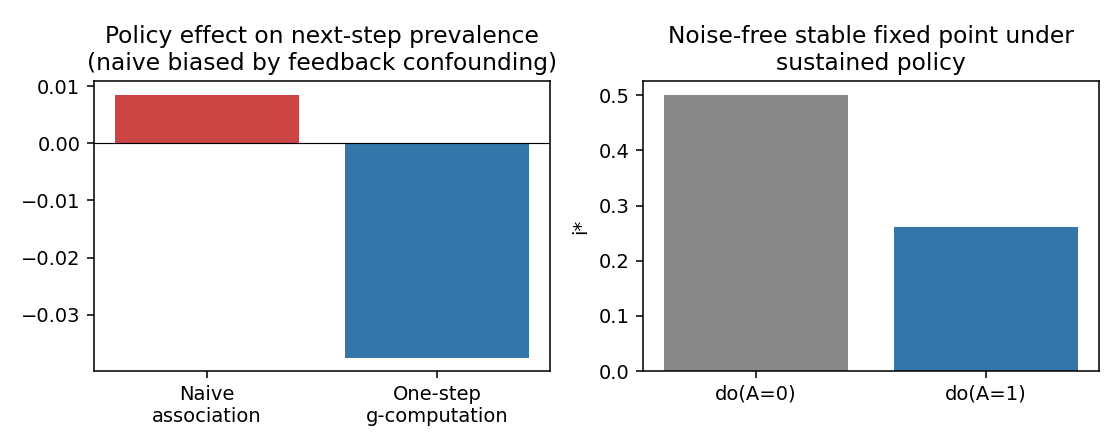}
\caption{Exp.~6 (interventional and counterfactual prediction). \textbf{(a)} In the grid market, the
correlational surrogate errs by $4.0$ demand units under an unseen tariff, whereas the structural ECG is exact.
\textbf{(b)} In the behavior--disease loop, the unadjusted one-step association has the wrong sign relative to
the positivity-supported, time-pooled one-step $g$-computation estimate. \textbf{(c)} Separately, the positive
stable fixed points of the noise-free clipped map reached from $i_0=0.3$ under sustained structural
interventions move $0.50\!\to\!0.26$.}
\label{fig:exp23}
\end{figure}

\paragraph{Exp.~6 (interventional and counterfactual prediction; grid and epidemic).} \emph{Goal.} Compare
structural and associational predictions for unseen interventions in two cyclic case studies. \emph{Setup.}
A two-prosumer grid market recovers $\theta$ from one equilibrium and is queried under the unseen tariff
$\doop(\tau=2)$. A behavior--disease SIS loop with an endogenous policy creates treatment--confounder
feedback. The assignment uses symmetric $5\%$ exploration around its prevalence-threshold policy: pointwise
in the current prevalence,
$\Prob(A_t{=}1\mid i_t)=0.05$ for $i_t\le0.45$ and $0.95$ for $i_t>0.45$, so each action has probability at
least $0.05$. In the $n=18{,}000$ seed-$3$ transitions,
$\widehat\Prob(A_t{=}0\mid i_t{>}0.45)=0.053$ and
$\widehat\Prob(A_t{=}1\mid i_t\le0.45)=0.053$; across $20$ additional seeds with the same $18{,}000$
transitions each, the empirical $2.5$--$97.5\%$ ranges are $[0.045,0.053]$ and $[0.048,0.052]$.
Conditional on the fully observed pre-action $i_t$, the exploration coin is independent of every transition
disturbance and has no direct effect on $i_{t+1}$ except through $A_t$; together with consistency, this identifies
the equal-time-pooled one-step target
\[
 \psi_{\mathrm{pool}}:=\frac1{60}\sum_{t=0}^{59}
 \E_{i_t\sim P_t^{\mathrm{beh}}}\!\left[m_1(i_t)-m_0(i_t)\right],\qquad
 m_a(i):=\E(i_{t+1}\mid A_t{=}a,i_t{=}i).
\]
We fit the coefficients and noise scale of a clipped-Gaussian outcome model using the simulator-known SIS basis
$q(i,a)=[1-\min\{1,i+0.4a\}](1-i)i$ and evaluate the exact clipped conditional mean. No transition is censored
in these runs, so the coefficient part of the Tobit likelihood reduces to ordinary least squares.
\emph{Key result.} The grid ECG predicts $Q_{\mathrm{tot}}=6$ exactly, while the correlational surrogate errs
by $4.0$ (\Cref{fig:exp23}, panel~(a)). In the SIS loop, the naive one-step association is $+0.009$ (wrong
sign), while the back-door/$g$-computation estimate is $-0.0376$ (exact clipped conditional-response oracle
under empirical standardization $-0.0374$; $20$-seed empirical $2.5$--$97.5\%$ range
$[-0.0375,-0.0373]$; \Cref{fig:exp23}, panel~(b)). Separately, iterating the known noise-free clipped
transition from $i_0=0.3$ under sustained $\doop(A_t=0)$ or $\doop(A_t=1)$ reaches positive stable fixed
points $0.50\!\to\!0.26$ (\Cref{fig:exp23}, panel~(c)). \emph{Theoretical implication.} The SIS estimate is
a synthetic, model-informed one-step result, not a generic black-box outcome-regression validation or a full
longitudinal regime effect; the structural fixed-point benchmark is not inferred by the preceding observational
adjustment.

\subsection{Latent identifiability: threshold, complexity, and recovery quality}
The symbolic and finite-instance calculations associated with these experiments are summarized in
\Cref{sec:computational}; they corroborate the named examples and do not prove the general results. Synthetic
cyclic-latent generators use block-structured $B$ rescaled to $\rho(B)=0.6$,
LiNG (Laplace) sources, and random full-column-rank $H$ ($p=d{+}2$). We report means with $95\%$ CIs over
$10$--$20$ seeds; runtime is seconds on a laptop CPU.

\paragraph{Exp.~7: identifiability threshold (the closed-form estimator).} \emph{Goal.} Locate the recovery
threshold for the closed-form estimator. \emph{Setup.} For one full-rotational source component of size $d=4$,
we vary the number $K$ of $\doop$-covered nodes. \emph{Key result.} The closed-form row-wise estimator
(\Cref{lem:recover}, which uses full coverage) has gauge-dominated error $\|\widehat B-B\|_F$ below threshold,
then collapses at $K=d$: $1.244\to1.110\to0.959\to0.706\to\mathbf{0.042}$ for $K=0,\ldots,4$; population
recovery is exact to $7\times10^{-15}$, and the within-component gauge residual collapses to $0$
(\Cref{fig:threshold}, panels~(a) and~(b)). \emph{Theoretical implication.} The finite recovery profile
corroborates the full-coverage construction in \Cref{lem:recover}.

\paragraph{Exp.~8: information-theoretic minimal $K$.} \emph{Goal.} Distinguish the intervention threshold on
universal-parent supports from that on supports without a universal parent. \emph{Setup.} A parametrized
collision search asks whether a non-$\simeq$-equivalent $(H',B')$ reproduces every environment's mixing on
random instances; finite completion-rank checks cover dimensions $3,4,5,6$ and include an explicit
three-dimensional collision family. \emph{Key result.} For a single size-$d$ source component on
universal-parent supports, a continuum of alternatives exists at $K=d-2$ (non-identified; $4/4$ graphs at
$d\in\{3,4\}$), while none exists at $K=d-1$ on the random dense instances searched. By contrast, the
closed-form completion characterization implies that a directed ring of any size $d\ge3$, which has no
universal-parent node, collides at $K=d-1$ and therefore has threshold $K=d$. \emph{Theoretical implication.}
The searches and completion-rank checks corroborate the support-dependent conclusion and the generic result in
\Cref{thm:linear}(c); they are finite calculations rather than proofs.

\paragraph{Exp.~9: sample complexity.} \emph{Goal.} Assess the predicted above-threshold $1/\sqrt n$ recovery
rate. \emph{Setup.} The $d=3$ simulation uses the special full-rank design $\mathsf D=3I_d$, no direct
probe-to-sensor effect, and zero-mean arm-invariant simulated sensor noise. \emph{Key result.} Error falls from
$0.14926$ at $n=500$ to $0.012386$ at $n=32\text{k}$, a $12.05\times$ drop over a $64\times$ range,
calculated from the unrounded seed means. The endpoint-implied exponent is $0.59852$, versus the asymptotic
$1/2$; the difference is finite-sample deviation, while the fixed conditioning constant governs the error
level. \emph{Theoretical implication.} These instances are consistent with \Cref{thm:linear}(d) and the
$1/\sqrt n$ prediction (\Cref{fig:threshold}, panel~(c)), but instantiate only the calibrated design and do not
test robustness to probe-dependent sensor bias.

\begin{figure}[t]\centering
\includegraphics[width=0.66\textwidth]{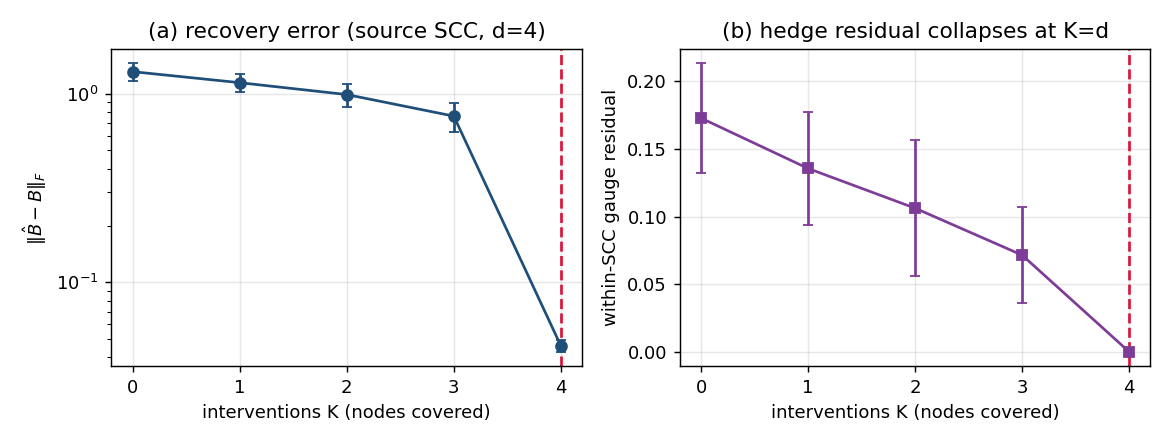}\hfill
\includegraphics[width=0.32\textwidth]{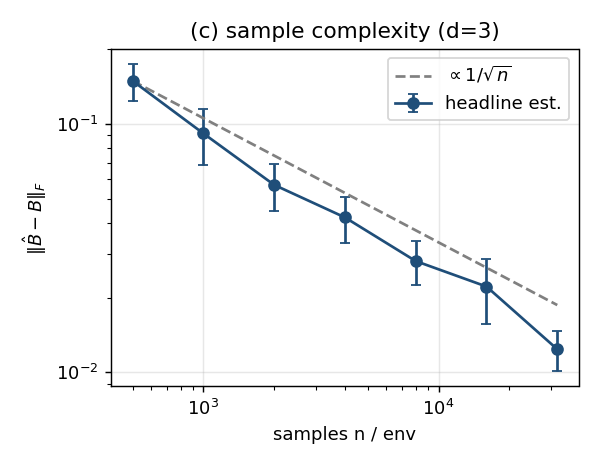}
\caption{Exp.~7 and Exp.~9 (threshold and sample complexity). \textbf{(a)} For Exp.~7, recovery error versus
interventions $K$ in a size-$d=4$ full-rotational source component is gauge-dominated below threshold and
collapses at $K=d$ (dashed). \textbf{(b)} Also for Exp.~7, the within-component hedge residual collapses at
$K=d$. \textbf{(c)} For Exp.~9, above-threshold sampled errors at $d=3$ are consistent with the predicted
$1/\sqrt n$ decay without establishing its asymptotic exponent. All panels show $95\%$ CIs over $12$ seeds
(\Cref{thm:linear}).}
\label{fig:threshold}
\end{figure}

\paragraph{Exp.~10: the hedge and non-Gaussianity.} \emph{Goal.} Test whether non-Gaussian information breaks
the covariance-preserving rotational hedge. \emph{Setup.} For a full-rotational source component, we apply a
signed-$\SO(m)$ rotation under Gaussian and non-Gaussian source regimes. \emph{Key result.} The rotation leaves
the covariance invariant in both regimes ($\|\Delta\Sigma\|\approx0.02$), but the rotated sources
$Q^\top\bm\eta$ have fourth-order dependence $0.009$ in the Gaussian case and $0.346$ in the non-Gaussian
case. \emph{Theoretical implication.} The Gaussian hedge remains non-identifying, whereas non-Gaussianity
rules it out in this finite instance, corroborating \Cref{prop:nongauss}.

\paragraph{Exp.~11: metrics and baselines.} \emph{Goal.} Quantify recovery and isolate the contributions of
cyclic modeling and interventional information. \emph{Setup.} We use $12$ seeded graphs, each with a complete
directed three-node source SCC and a fourth singleton. Two controlled baselines isolate the two ingredients:
(i) an acyclic-but-interventional method receives the same interventional data but may output only a DAG, as in
acyclic CRL; (ii) an observational FastICA surrogate, implemented as PCA+FastICA, uses observational data
without interventions. We also include a
$\sigma$-separation-only ablation. \emph{Key result.} The closed-form estimator attains MCC
$\mathbf{1.000}$, SHD-up-to-equivalence $\mathbf{0.00}$, and gauge residual $\mathbf{0.000}$. Forcing the
correctly recovered cyclic $\widehat B$ to a DAG deletes exactly three of the SCC's six directed edges---one
from each reciprocal pair---and incurs SHD $\mathbf{3.00}$. The observational FastICA surrogate has MCC $0.85$,
and the graph is non-identifiable. \emph{Theoretical implication.} The acyclic baseline measures the cost of
the acyclicity assumption, not a weak implementation; the observational-only FastICA result agrees with
\Cref{thm:gauge}. The sound but incomplete separation-only ablation cannot orient within-component edges,
consistent with the completeness calibration above.

\paragraph{Exp.~12: intervention strength.} \emph{Goal.} Measure sensitivity to intervention magnitude.
\emph{Setup.} We vary the shift magnitude from $\delta=0.5$ to $8$. \emph{Key result.} Recovery error improves
monotonically from $0.149$ to $0.011$. \emph{Theoretical implication.} The pattern is the expected
signal-to-noise behavior.

\paragraph{Exp.~13: nonlinear block-ID collisions and the isotropy-breaking boundary.} \emph{Goal.} Test
partition recovery under isotropy-breaking interventions and contrast it with the isotropy-preserving
collision. \emph{Setup.} The positive setting is deliberately special:
$h(v)=W(v+0.3\sin v)$ is coordinate-block-separable up to an unknown random permutation of its four observed
coordinates, with two independent source blocks, each of dimension $2$. Two labelled per-coordinate variance interventions act inside
one block. A response-profile learner receives only observational and interventional $X$ samples, scores each
observed coordinate by its largest absolute log-variance response, and splits the ordered scores at their
largest gap; the generator partition is withheld until scoring. \emph{Key result.} With $n=800$ per discovery
arm, the learner recovers the exact responsive-block/invariant-complement partition on $20/20$ seeds (minimum
response-score gap $0.956$). On an independent seed-$0$ held-out sample ($n=800$ per arm), the learned
responsive group changes (energy-test $p=0.007$), its learned complement remains invariant ($p=0.159$), and
their distance-correlation is $0.068$. Under labelled isotropy-preserving interventions, a within-component
rotation in either size-$2$ block reproduces the block-interventional law (relative gap approximately $2\%$) with a different within-component
factorization (gap $0.97$), and a $B$-dependent cross-component rotation preserves the joint law while
re-partitioning. \emph{Theoretical implication.} Isotropy-breaking supports truth-blind partition recovery in
this block-separable, strict-response-gap setting, but does not establish recovery under a general dense
diffeomorphic mixing. The collisions of \Cref{thm:block} leave even the partition non-identified under
isotropy preservation; the general positive remains open.

\paragraph{Exp.~14: semi-real cyclic generators (12 seeds, mean $\pm1.96$ Monte Carlo SE).} \emph{Goal.} Test
prediction for a held-out mechanism target---not a new intervention type---in two calibrated cyclic generators.
\emph{Setup.} For each stable two-cycle, training contains the observational aligned response map and exactly
$K=d-1=1$ labelled perfect-mechanism map, each acquired with the same calibrated full-rank shift-probe design;
no response from the held-out target is supplied. The held-out node is price in the grid market
(price$\rightleftarrows$demand, $\rho(B)=0.55$) and caution in the attenuated local behavior--disease SIS
response loop ($i^\star=b^\star=0.5$). The latter's simulated latent coordinates are centered local deviations
in one common declared scale under the response equations of \Cref{thm:special2}, not raw prevalence/caution
proportions. Each held-out node directly parents the other, so the graph-global completion condition holds; the
minimum sampled $\sigma_{\min}/\sigma_{\max}$ of the completion system is $0.222/0.514$. Both predictors
receive identical labelled estimated maps. The declared response-only label-exchange stress comparator fits the
theorem-implied right rank-one signature by least squares and transfers it under an exchange of the two target
labels, without recovering $H$ or $B$. \emph{Key result.} The $K=d-1$ structural completion recovers the grid
and SIS loops to $\|\widehat B-B\|=2.25\times10^{-2}\pm9.0\times10^{-3}$ and
$1.38\times10^{-2}\pm3.8\times10^{-3}$, respectively. The comparator stores the available mechanism map
verbatim, so its zero training lookup error is by construction rather than evidence that the finite-sample
difference is exactly rank one; its mean signature-fit residual is $0.0124\pm0.00735$ in the grid setting and
$0.0151\pm0.00831$ in the SIS setting. On the full response map of the genuinely unobserved target, relative
Frobenius error is $0.0179\pm0.0078$ versus $0.105\pm0.020$ in the grid setting, and
$0.00851\pm0.00252$ versus $0.561\pm0.072$ in the SIS setting. An anchored minimum-norm target table, reported
only as a no-target-sharing diagnostic, gives $0.689\pm0.101$ and $0.288\pm0.0376$; its missing coefficient is
fixed by regularization and is not presented as a learned extrapolator. A clamp of the held-out node, in the
common local-deviation scale for the SIS setting, uses only one map column and is matched by the
response-transfer rule in these two-node loops, so it is not used to claim an advantage.
\emph{Theoretical implication.} These are finite-instance results for the two named calibrated generators and
the specified same-information stress comparators. They show improved full-map prediction over these declared
rules in the named settings, not universal superiority over nonstructural predictors and not a proof of
\Cref{thm:linear}.

\section{Limitations and broader impacts}
\label{sec:limits}
\textbf{Limitations.}
\begin{enumerate}[leftmargin=1.6em,itemsep=3pt,topsep=2pt]
\item \textbf{Well-posedness at both layers.} Every well-posedness check covers all cyclic SCCs (singleton
self-cycles included) after recomputing post-edit SCCs. Graphical soundness additionally requires that every
exogenous dependence be represented and the sufficient strongly-connected-subset solvability premise used by
the global Markov theorem; graphical adjustment and separation-based identification routes inherit these
requirements. At the latent layer contraction $\rho(B)<1$ enforces the selected stable class;
non-uniquely-solvable games need a declared selection rule $\mathsf{Sel}$ and are only partially covered.

\item \textbf{Observed-layer completeness.} ECG-separation is sound but \emph{not} complete (as shown by the
model-incompleteness construction). For practice we provide trek-separation---generically complete for Gaussian
CI, and generically complete for second-order (zero-partial-covariance) structure under non-Gaussian noise, on
\emph{linear} ECGs---and a sound smooth-case criterion. A complete CI criterion beyond the linear class stays
open, as does the exact \emph{iff} between our sufficient-ID and sufficient-non-ID (hedge) sides.

\item \textbf{Latent-layer completeness.} The clean \emph{iff} is for the zero-diagonal linear/LiNG class. In
the nonlinear case, for an isotropic-Gaussian source block $S\subset\R^m$, $m\ge2$, labelled
isotropy-preserving interventions leave its within-block factorization unidentified. For two isotropic blocks
$A\in\R^{m_A}$, $B\in\R^{m_B}$ with $m_A\ge2$, $m_B\ge1$, labelled interventions preserving the required
radial/isotropic product law also leave the block partition unidentified (\Cref{thm:block}). The proved positive identifies only independent source blocks under the
assumptions of \Cref{thm:blockid-source}. Positive recovery of equilibrium-component partitions or factors, or
of within-component structure, under additional restrictions beyond that source-block theorem remains open.

\item \textbf{Information cost and transversality.} In the ambient unknown-support, unknown-$H$ class, the
active route needs aligned, well-posed \emph{mechanism} actuation---for $d\ge2$, shifts never suffice, whatever
their number (\Cref{thm:linear,prop:shift})---and the passive route additionally needs non-Gaussianity
(\Cref{prop:nongauss}). The two obstructions are transverse, and the substitution is \emph{one-way}
(\Cref{rem:transverse}): non-Gaussianity provably cannot collapse the legal fixed-$M$ frame chart---every chart
member fixes the source map, so the laws agree to all orders. The active route uses calibrated full-rank shift
probes under mean-stable sensing to acquire aligned response maps and sufficiently rich well-posed mechanism
environments to split $H$ from $B$; for $d\ge2$, shift probes alone never suffice. At $d=1$, zero diagonality
already fixes $B=0$. Those mechanism responses also collapse the hedge. In other declared information classes,
known support, known/anchored $H$, or calibration restrictions can shrink or pin the legal fibre without
mechanisms. Passive ICA is an alternative acquisition route only in the noiseless submodel (or under a separate
noisy-ICA identifiability theorem for the declared noise law) and under a separate cross-environment alignment
theorem. Non-Gaussianity is the cheaper cure but buys strictly less.

\item \textbf{Game-layer scope.} \Cref{thm:game} covers the \emph{potential-game} subclass. Its game-essential
content is the refutable specification test. The tested two-node point-identification result is for the
symmetric-$B_S$ subclass (equal own-curvatures for an exact potential), with contraction selection when the
data-generating equilibrium is itself contraction-stable. On complete nonzero reciprocal support, generically
on the regular set, the general weighted-potential restriction leaves an $(m{-}1)$-dimensional fibre for
$m\ge3$. No such formula is asserted on sparse support without its actual tangent/rank analysis, and the cycle
restriction is vacuous on forests. The identification gain is a generic codimension consequence, not a
game-unique mechanism.

\item \textbf{Non-Gaussian \emph{iff} scope.} The functional-separation result is a sufficient condition for
exact non-Gaussian CI (\Cref{thm:funcsep}). Under the a.e.\ conditional analyticity and
moment-determinacy assumptions of \Cref{app:cumulant}, vanishing of the complete mixed conditional-cumulant
hierarchy is also an exact \emph{distributional} criterion. No non-Gaussian graphical iff is claimed, and
neither the linear result nor block-ID soundness depends on one.

\item \textbf{External-validation boundary.} Exp.~1--14 are synthetic or calibrated studies. They corroborate
their named finite instances but do not establish operational effectiveness in real energy markets or
public-health systems. No high-fidelity simulator or real-data analysis is used as evidence in this paper; an
applied claim would require a complete, fully specified data-generation or extraction pipeline that an
independent team can rerun end-to-end,
intervention construction, task-level outputs, and declared aggregation and uncertainty analysis.
\end{enumerate}

\paragraph{Open questions and conjectures.}
\begin{enumerate}[leftmargin=1.6em,itemsep=2pt,topsep=2pt]
\item \textbf{Complete observed-layer criteria.} A complete CI criterion beyond the linear class and the exact
\emph{iff} between our sufficient-ID and sufficient-non-ID (hedge) sides remain open. Under the assumptions of
\Cref{app:cumulant}, the complete mixed conditional-cumulant hierarchy is already an exact distributional
criterion; what remains open is a graph characterization beyond the functional-separation sufficient condition.
\item \textbf{General nonlinear block identification.} For an isotropic-Gaussian source block
$S\subset\R^m$, $m\ge2$, labelled isotropy-preserving interventions leave its within-block factorization
unidentified. For two isotropic blocks $A\in\R^{m_A}$, $B\in\R^{m_B}$ with $m_A\ge2$, $m_B\ge1$, labelled
interventions preserving the required radial/isotropic product law also leave the block partition unidentified
(\Cref{thm:block}). The proved positive in
\Cref{thm:blockid-source} identifies independent source blocks only under its assumptions. Positive recovery of
equilibrium-component partitions or factors, or of within-component structure, under additional restrictions
beyond that source-block theorem remains open.
\item \textbf{Game-layer point identification.} On the symmetric-$B_S$ subclass, the fibre is generically
finite for every $m$; stability yields uniqueness at $m{=}2$, while uniqueness for $m\ge3$ remains conjectural.
General weighted-potential models retain the stated $(m{-}1)$-dimensional fibre on complete nonzero reciprocal
support, generically on the regular set, rather than conjectural uniqueness (\Cref{thm:game}).
\item \textbf{High-dimensional finite-sample rates.} A dimension-explicit or uniform-in-$d$ rate requires
additional nondegenerate-covariance and matrix-concentration or operator-norm assumptions and is left open.
\item \textbf{External validation.} Operational effectiveness in real energy markets or public-health systems
requires the applied validation described above and remains open.
\end{enumerate}

\paragraph{Broader impacts.} Sound interventional reasoning on feedback systems---and a criterion establishing that a
closed-loop twin's latent state is genuinely causal rather than merely predictive---can improve energy-market
and public-health decision support. The same machinery could be used to design manipulative interventions, and
equilibrium models can encode biased payoffs; deployments should therefore be paired with validation against
held-out interventions.

\section{Computational details}
\label{sec:computational}
Selected symbolic and finite-sample calculations corroborate the examples in \Cref{sec:exp} and the accompanying theorem discussions; complete proofs are provided in \Cref{app:proofs}. Code, data, and computational artifacts underlying the reported results are available from the author upon reasonable request

\bibliographystyle{plainnat}
\bibliography{ecg-c1-references}

\appendix
\section{Proofs}
\label{app:proofs}

\paragraph{\Cref{thm:exist}.} The condensation graph of the SCCs is a DAG. For a cyclic component, after holding
its external coordinates fixed, Banach gives a unique fixed point on its fixed nonempty complete component
space. Starting from the fixed seed, the Picard iterates are jointly Borel by induction and converge pointwise
to that fixed point on the Borel full-measure set $\mathcal U_0$; defining the map to equal the seed outside
$\mathcal U_0$ gives a jointly measurable a.e.\ component solution map in the held coordinates and exogenous
input. Thus every cyclic SCC is uniquely solvable w.r.t.\ itself; an acyclic singleton equation contains no
self-dependence and is uniquely solved w.r.t.\ itself by direct evaluation of its jointly Borel structural
assignment. Proceeding in a topological order of the condensation DAG, every
external parent of the current component has already been solved; finite composition therefore gives a jointly
Borel global solution on $\mathcal U_0$, and componentwise uniqueness gives global uniqueness there.
Linear: $\bm v=(I-B)^{-1}\bm u$, unique iff
$\det(I-B)\neq0$; Borel coefficient/forcing maps give a Borel inverse solution on the nonsingular domain, while
the simultaneous iteration converges to the fixed point from every initialization iff $\rho(B)<1$. These differ (grid component:
$\det(I-B)=1+2c>0$ but $\rho=\sqrt{2c}$, which exceeds one exactly when $c>1/2$). $\qed$

\paragraph{\Cref{thm:sound}.} Replace every shared exogenous source by an explicit latent common-parent node.
The remaining primitive source blocks are mutually independent, so the augmented graph represents the complete
stochastic factorization. The $\mathsf{Eq}$ fixed-point conditions are structural equations whose mutual
dependence is the equilibrium SCC, and the assumed unique solvability for every strongly connected subset is
the sufficient loop-solvability premise used here for the generalized directed global Markov property
\citep{Bongers2021,Forre2018}. Hence $\sigma$-separation in the augmented graph implies the stated conditional
independence. $\qed$

\paragraph{\Cref{prop:complete}.} Soundness is \Cref{thm:sound} (computationally checked). (i) With $V_0=U_0$,
$V_1=U_1$, $V_2=f_2(V_0,V_3,U_2)$, $V_3=f_3(V_1,V_2,U_3)$ and independent private blocks
$(U_0,U_2)\indep(U_1,U_3)$, conditioning on
$V_2{=}x,V_3{=}y$ solves the $2$-cycle for $(U_2,U_3)$: $U_2=\phi_2(x,y;V_0)$, $U_3=\phi_3(x,y;V_1)$. The change
of variables $(U_2,U_3)\!\mapsto\!(V_2,V_3)$ at fixed $(V_0,V_1)$ carries the cycle Jacobian
\[
 \left|\frac{\partial(U_2,U_3)}{\partial(V_2,V_3)}\right|
 =\frac{|1-\mathfrak L|}{|f_{2,U_2}f_{3,U_3}|},
 \qquad
 \mathfrak L=\frac{\partial f_2}{\partial v_3}\frac{\partial f_3}{\partial v_2}.
\]
For a single inverse branch the joint
density factors as
\begin{align*}
p(V_0,V_1,V_2{=}x,V_3{=}y)
 &=\underbrace{\frac{p_{U_0,U_2}(V_0,\phi_2)}{|f_{2,U_2}|}}_{(V_0)\text{-side}}
   \underbrace{\frac{p_{U_1,U_3}(V_1,\phi_3)}{|f_{3,U_3}|}}_{(V_1)\text{-side}}
   |1-\mathfrak L|.
\end{align*}
(the side factors are $\sigma(V_2,V_3)$-measurable in their own noise), so $V_0\indep V_1\mid V_2,V_3$
\emph{iff $|1-\mathfrak L|$ is multiplicatively $(V_0;V_1)$-separable given $(V_2,V_3)$} --- automatic whenever
$\mathfrak L$ is $\sigma(V_2,V_3)$-measurable along the solution (the linear/additive class, constant
$\mathfrak L$). With finitely many inverse branches, the same conclusion holds only when the complete
branch-summed coarea weight is multiplicatively separable; branchwise separability alone is insufficient. A
\emph{separate} indicator-factorization gives the finite-state conclusion. Indeed, at fixed $(v_2,v_3)$ let
\begin{align*}
 L(v_0;v_2,v_3)&=\sum_{u_0,u_2}p_{02}(u_0,u_2)\mathbf 1\{v_0=u_0,\ v_2=f_2(v_0,v_3,u_2)\},\\
 R(v_1;v_2,v_3)&=\sum_{u_1,u_3}p_{13}(u_1,u_3)\mathbf 1\{v_1=u_1,\ v_3=f_3(v_1,v_2,u_3)\}.
\end{align*}
Private-block independence gives
$p(v_0,v_1,v_2,v_3)=L(v_0;v_2,v_3)R(v_1;v_2,v_3)$; normalizing over $(v_0,v_1)$
therefore gives $V_0\indep V_1\mid V_2,V_3$ at every conditioning value of positive probability. In those
cases the walk
$V_0\!\to\!V_2\!\to\!V_3\!\leftarrow\!V_1$ is $\sigma$-open, refuting completeness. For this four-node
construction, an exact-law finite run finds the stated conditional independence in $4000/4000$ sampled consistent
binary ECGs, and direct calculation gives the same conditional independence for the linear/additive continuous
class; these calculations illustrate the preceding
proof. It \emph{fails} when $\mathfrak L$ couples the sides:
$V_2{=}V_0V_3{+}U_2$, $V_3{=}V_1V_2{+}U_3$ has $\mathfrak L{=}V_0V_1$, $|1-V_0V_1|$ non-separable, so
$V_0\not\indep V_1\mid V_2,V_3$ (a numerical evaluation gives conditional
$\mathrm{corr}\approx-0.19$)---the earlier ``for every $f_2,f_3$'' would be false.
(ii)/(iii) as stated. $\qed$

\paragraph{\Cref{thm:special}.} (a) Acyclic $\Rightarrow$ singleton SCCs $\Rightarrow$ $\sigma$-exception never
fires $\Rightarrow$ $d$-separation; exhaustive finite enumeration gives the same conclusion on the enumerated
class. (b) Setting $\mathsf{Eq}=$ logit-QRE and reading
\eqref{eq:qre} as structural equations reproduces MA-AIRL. (c) Myopic $b{=}i$, $\dot I=0$ gives
$i(\beta_0(1-i)^2-\gamma)=0$, interior root $i^\star=1-\sqrt{\gamma/\beta_0}$ when
$\beta_0>\gamma>0$ (symbolic). $\qed$

\paragraph{\Cref{prop:counter}.} Acyclification places $\{q_1,q_2\}$ in one bidirected component, so
$q_1\not\ecgsep q_2\mid\{\theta_1,\theta_2\}$; because the disturbances are independent of
$(\theta_1,\theta_2)$ and each other with common variance, conditioning on $(\theta_1,\theta_2)$ gives by
direct calculation $\mathrm{pcorr}=-2\beta/(1+\beta^2)=-0.80$ at $\beta=1/2$. $\qed$

\begin{lemma}[No cancellation in the all-minors expansion]\label{lem:nocancel}
Let every present entry $B_{ij}$, including a permitted diagonal self-loop, be an algebraically independent
indeterminate. For index sets with $|R|=|C|$, the all-minors (Coates) expansion
$\det((I-B)_{R,C})=\sum_{\varphi:C\to R}\mathrm{sgn}(\varphi)\prod_{c\in C}(\delta_{\varphi(c),c}-B_{\varphi(c),c})$
runs over bijections $\varphi$. At a moved column the factor is $-B_{\varphi(c),c}$; at a fixed point it is
$1-B_{cc}$. Expand the product further by a subset $F$ of the fixed points, selecting $-B_{cc}$ for $c\in F$
and $1$ otherwise. Each resulting squarefree monomial records both the moved-arc set
$E(\varphi)=\{(\varphi(c),c):\varphi(c)\neq c\}$ and the selected self-loop set $F$. The moved arcs determine
$\varphi$ on every moved column; all remaining columns are fixed, and their diagonal factors determine $F$.
Thus $(\varphi,F)\mapsto$ monomial is injective. Distinct expanded linear subgraphs give distinct monomials,
no two terms cancel, every coefficient is $\pm1$, and $\det((I-B)_{R,C})\not\equiv0$ iff an admissible
permutation uses only present moved arcs (with any permitted self-loops recorded by $F$). Setting absent
diagonal entries to zero recovers the zero-diagonal special case.
\end{lemma}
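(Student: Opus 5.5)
The argument is a standard Coates/linear-subgraph expansion followed by an injectivity check on the resulting monomials. I would start from the Leibniz-type expansion of the minor. Index rows of $(I-B)_{R,C}$ by $R$ and columns by $C$, and write
\[
\det((I-B)_{R,C})=\sum_{\varphi:C\to R}\mathrm{sgn}(\varphi)\prod_{c\in C}(I-B)_{\varphi(c),c},
\]
summing over bijections $\varphi$. The sign is taken relative to the fixed orderings of $R$ and $C$.

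For each $\varphi$, a column $c$ with $\varphi(c)\neq c$ contributes $-B_{\varphi(c),c}$. This factor vanishes identically unless the arc is present. A column with $\varphi(c)=c$ (possible only when $c\in R\cap C$) contributes $1-B_{cc}$. Expanding this binomial over subsets $F$ of the fixed points gives a sum of signed squarefree monomials
\[
\pm\prod_{c\ \text{moved}}B_{\varphi(c),c}\prod_{c\in F}B_{cc}.
\]
Each such monomial has coefficient $\pm1$ and is indexed by the pair $(\varphi,F)$.

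The core step is injectivity of $(\varphi,F)\mapsto$ monomial. Because the entries are algebraically independent indeterminates, a monomial determines its variable set. Off-diagonal variables $B_{ij}$ ($i\neq j$) are distinct symbols from diagonal ones, so the monomial splits uniquely into its moved-arc set $E(\varphi)$ and its diagonal set $F$.

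From $E(\varphi)$ I recover $\varphi(c)=i$ for every column $c$ that appears as the head of an arc $(i,c)$. Each column appears at most once, since $\varphi$ is a function. Every other column must be fixed, because $\varphi$ is a bijection and non-moved columns map to themselves. This determines $\varphi$ entirely, and $F$ is read off from the diagonal variables.

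One subtlety needs checking: a moved column never carries a diagonal variable, since $\varphi(c)\neq c$. Hence $F$ is contained in the fixed points, which is consistent. Distinct pairs therefore give distinct monomials, so no cancellation can occur among the terms.

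It follows that the polynomial is nonzero iff at least one term survives. A term survives iff some bijection $\varphi$ uses only present moved arcs, with any subset $F$ of permitted self-loops (taking $F=\emptyset$ always works). This gives the stated criterion. Setting absent diagonals to zero only removes the $F\neq\emptyset$ terms and leaves the argument unchanged.

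The main obstacle I expect is bookkeeping rather than depth. I need to make sure that the "fixed point" notion makes sense when $R\neq C$ as sets, i.e.\ it must be defined via the common labels of $R\cap C$. I also need the sign convention to be applied consistently so that coefficients are exactly $\pm1$. I would handle both points by identifying rows and columns through the vertex labels before expanding.
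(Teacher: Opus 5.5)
Your proposal is correct and follows essentially the same route as the paper, which embeds its argument in the lemma statement itself. Both expand over bijections $\varphi:C\to R$, expand each fixed-point factor $1-B_{cc}$ over subsets $F$, recover $\varphi$ from the moved-arc set $E(\varphi)$ (all other columns being fixed) and $F$ from the diagonal variables, and conclude injectivity, no cancellation, $\pm1$ coefficients, and the admissible-permutation criterion. Two cosmetic points, neither affecting your argument: under the paper's convention that $B_{ij}$ is the coefficient of the edge $j\to i$, column $c$ is the tail rather than the head of the edge encoded by $B_{\varphi(c),c}$; and non-moved columns are fixed by definition, not because $\varphi$ is a bijection.
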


\paragraph{\Cref{thm:trek} (complete proof).}
\emph{Step 1 (rank form --- airtight).} For Gaussian $\V$, $\V_{\bm A}\indep\V_{\bm B}\mid\V_{\bm C}$ iff
$\operatorname{rank}\Sigma_{A\cup C,\,B\cup C}=|C|$. \emph{Step 2 (Cauchy--Binet).} With $P=(I-B)^{-1}$,
$\Sigma_{A,B}=P_A\Omega P_B^{\!\top}$, and every $k\times k$ minor of $\Sigma_{A,B}$ equals
$\sum_{|S|=k}(\prod_{s\in S}\Omega_{ss})\det(P_{A',S})\det(P_{B',S})$; generically nonzero iff some $S$ has both
factors $\not\equiv0$. \emph{Step 3 (linkage$=$rank, self-contained; the cyclic ``LGV'' step).}
\textbf{(a) Jacobi complementary minors} \citep{HornJohnson2013}: for invertible $M$,
$\det(M^{-1}_{I,J})=(-1)^{\sigma(I)+\sigma(J)}\det(M_{J^c,I^c})/\det M$; with $M=I-B$,
$\det(P_{I,J})\neq0\iff\det((I-B)_{J^c,I^c})\neq0$ (cycles irrelevant).
\textbf{(b) Coates/all-minors expansion} \citep{Coates1959,Chen1976}:
$\det((I-B)_{J^c,I^c})=\sum_{\varphi}\mathrm{sgn}(\varphi)\prod_c(\delta_{\varphi(c),c}-B_{\varphi(c),c})$ is a
signed sum over linear subgraphs (vertex-disjoint directed paths $+$ cycles $+$ diagonal fixed points);
directed cycles appear as ordinary permutation-cycle terms, so acyclicity is never used---precisely where the
LGV sign-reversing involution would have required it.
\textbf{(c) No cancellation (\Cref{lem:nocancel}).} With every supported $B_{ij}$, diagonal self-loops included,
algebraically independent,
distinct linear subgraphs give distinct squarefree monomials, so no two terms cancel and the determinant is
$\not\equiv0$ iff an admissible subgraph exists.
\textbf{(d) Complement bookkeeping.} The surplus sets $I\setminus J$ and $J\setminus I$ are joined by
$|I\setminus J|$ vertex-disjoint paths; with the $|I\cap J|$ shared indices as length-$0$ paths this is exactly
a $|I|$-linkage $J\to I$, off-linkage vertices absorbed by diagonal fixed points or disjoint cycles. Hence
generic $\operatorname{rank}P_{I,J}=$ max vertex-disjoint $J\to I$ linkage (Menger). With Step 2 ($\Omega$
diagonal), generic $\operatorname{rank}\Sigma_{A,B}=$ the maximum \emph{side-disjoint} trek system; with $P{=}A\cup C,Q{=}B\cup C$
this is $|C|$ iff $\bm A\perp_t\bm B\mid\bm C$. Soundness holds for all parameters; completeness is generic.
Any $\sigma$-blocked walk is trek-blocked. Direct symbolic calculation gives the Jacobi identity and the
linkage$\iff$non-vanishing characterization for $225$ minors $+$ $52$ trek cases on a $5$-vertex $3$-cycle
graph. Numerical rank matched the maximum side-disjoint-trek-system size in $300/300$ sampled acyclic and
$300/300$ sampled cyclic cases. These finite calculations corroborate the preceding general argument. Scope:
diagonal $\Omega$. $\qed$

\paragraph{\Cref{thm:backdoor}.} $\doop(X{=}x)$ replaces $X$'s mechanism and re-solves the
(uniquely-solvable) post-surgery model. Represented exogeneity and the primitive assignment noise $E_I$, which
is independent of all structural sources, make the augmented assignment graph a valid Markov graph, so
\Cref{thm:sound} gives
$Y\indep I_X\mid X,\bm Z$. Consistency and observational-regime positivity exchange the conditional outcome
kernel with $\Prob_0(Y\mid X{=}x,\bm Z)$; applying the edit to the baseline population and standardizing over
$\Prob_0(\bm Z)$ gives the adjustment formula. The cycle in $Y$ is handled because the post-surgery model stays
uniquely solvable. For the confounded cyclic example, numerical evaluation of the adjusted effect recovers the
target $1.43$ within $10^{-4}$. $\qed$

\paragraph{\Cref{thm:interv-id}.} Put $A_{\mathcal I}=(I-B_{\mathcal I})^{-1}$, which exists under the stated
determinant condition (unique solvability, \Cref{thm:exist}). Linearity gives
$\E[\V_{\mathcal I}]=A_{\mathcal I}\mu_{\mathcal I}$,
$\operatorname{Cov}(\V_{\mathcal I})=A_{\mathcal I}\Omega_{\mathcal I}A_{\mathcal I}^{\top}$, and, for a known linear
forcing direction, $\partial\E[\V_{\mathcal I}]/\partial x=A_{\mathcal I}\Gamma_{\mathcal I}$. Thus the
identified edited moments give exactly the stated moment and mean-response conclusions. A jointly Gaussian
edited source vector, and hence its linear image, is determined by its mean and covariance. For arbitrary
sources, moments alone do not determine the law; when the complete labelled joint law
$\Prob_{\U_{\mathcal I}}$ is identified, the marginal law of $\V_{\mathcal I}$ is its image under
$\bm u\mapsto A_{\mathcal I}\bm u$. A cross-world law would additionally require an identified coupling and is
not asserted. The determinant clause is load-bearing: an
\emph{isolated} unit-gain reciprocal pair ($b_{ij}b_{ji}=1$, no compensating feedback) gives
$\det(I-B_{\mathcal I})=0$. Player addition, selection under multiplicity, and curvature edits depending on the
own-curvature scale are not declared edits of the foregoing structural/source objects and are excluded. For the converse, identification is
query-relative (\S\ref{par:queryrel}). On a constant-rank neighbourhood, choose coordinates
$\Phi_{\mathcal I}(s,t)=(s,0)$. The neighbourhood kernel inclusion is $\partial_tQ=0$ throughout a connected
coordinate plaque; integrating along its $t$-segments gives $Q(s,t)=q(s)$, hence
$Q=q\circ\Phi_{\mathcal I}$ locally. Conversely, differentiating such a factorization gives
$\ker D\Phi_{\mathcal I}\subseteq\ker DQ$. Globally, point identification at $\theta_0$ is by definition
$Q(\vartheta)=Q(\theta_0)$ for every $\vartheta$ in the complete admissible fibre of $\theta_0$; this explicitly
compares disconnected and finite components. The pointwise collision
$\Phi(x,y)=x$, $Q(x,y)=y^2$ passes the kernel test at $(0,0)$ but varies along the fibre, while
$\Phi(x)=x^2$, $Q(x)=x$ separates the two-point fibre over $1$. These show why neither a one-point derivative
test nor fibre dimension/cardinality alone proves global identification. The $2$-cycle do-effect is one query
with positive local ambiguity, whereas a back-door-covered effect can remain globally fibre-constant even when
$B$ is not identified (\Cref{thm:soundid}); so there is no blanket converse. $\qed$

\paragraph{\Cref{thm:soundid}.} Route (a) is \Cref{thm:backdoor} with all of its graph-representation,
regime-assignment, solvability, consistency, and positivity premises. In route (b), at the observed law the
stated half-trek rational maps are defined and recover every structural coefficient and error-covariance entry
used by the declared query; the declared observational model identifies any additional disturbance-law feature
it uses. Deterministic well-posed substitution is then \Cref{thm:interv-id}. The nonzero-denominator premise
excludes the exceptional laws at which the graphical half-trek criterion's generic conclusion need not be
pointwise valid. The result is a disjunction of two
independently sufficient routes. $\qed$

\paragraph{\Cref{thm:hedge}.} For distinct $X,Y\in S$, $\Sigma=AA^\top$ fixes
$A=(I-B)^{-1}\Omega^{1/2}$ only up to $A\mapsto AQ$,
$Q\in O(m)$. For a complete source SCC, any $Q$ with $(AQ)^{-1}$ having nonzero diagonal yields a valid
gauge-equivalent $(B',\Omega')$ (set $\Omega'^{1/2}=\mathrm{diag}(1/((AQ)^{-1})_{ii})$,
$B'=I-\Omega'^{1/2}(AQ)^{-1}$). The effect $\doop(X)\!\to\!Y$ is a rational function on the orbit $O(m)$
(dimension $m(m{-}1)/2\ge1$), non-constant for a.e.\ $B$ (Zariski-open, witnessed by an explicit \emph{stable}
pair: $\Sigma$ identical to $10^{-15}$, effects $0.218$ vs $-1.352$, both $\rho<1$). A non-constant rational
function on a connected positive-dimensional orbit takes more than one value. $\qed$

\paragraph{\Cref{thm:game}.} Write $H_{ij}=\partial^2u_i/\partial a_i\partial a_j$, $H_{ii}<0$ (second-order
condition), $B_{ij}=-H_{ij}/H_{ii}$ ($i\neq j$). \emph{Symmetrizability.} A weighted potential
\citep{MondererShapley1996} gives $w_iH_{ij}=w_jH_{ji}$. Take $D=\mathrm{diag}(-w_iH_{ii})\succ0$. Then
$(DB)_{ij}=(-w_iH_{ii})(-H_{ij}/H_{ii})=w_iH_{ij}=w_jH_{ji}=(DB)_{ji}$, so $DB$ is symmetric (direct symbolic
calculation also gives $DB-(DB)^\top=0$ for $m{=}3$). To count the independent conditions, fix an orientation of
the reciprocal nonzero support $G=(V,E)$ and put
$\ell_{ij}=\log|B_{ij}/B_{ji}|$. After sign-symmetry is imposed, $DB$ is symmetric iff
$\ell_{ij}=z_j-z_i$ for vertex potentials $z_i=\log d_i$. If $A_G$ is the oriented incidence matrix, this is
$\ell\in\operatorname{im}A_G^\top$. Since $\rank A_G=|V|-c(G)$, its orthogonal cycle space has dimension
$\beta_1(G)=|E|-|V|+c(G)$. A spanning forest gives one fundamental cycle for each of the $\beta_1(G)$ chords;
zero circulation on these cycles is necessary and sufficient, and every other cycle equality is their signed
sum. Thus the equality codimension on the fixed-support/sign stratum is exactly $\beta_1(G)$ and is zero iff
$G$ is a forest. For complete support this is $(m{-}1)(m{-}2)/2$, the codimension of the symmetrizable locus;
a generic $B$ violates it (numerically: median residual $1.0$, and $0.34$/$0.75$ for $m{=}3$/$4$ even among
sign-symmetric $B$). The triangle-free reciprocal four-cycle already has one chord relative to a spanning tree:
for any $0<a<1/3$, clockwise coefficients $2a$ and reverse coefficients $a$ give products
$16a^4\ne a^4$ while the maximum row sum is $3a<1$, so even a contraction-stable example violates the cycle
equality.
\emph{(ii)} For an exact potential ($w\equiv1$) $DB$ is symmetric with $D=\diag(-H_{ii})$, but $B$ itself is
symmetric \emph{iff} the weighted own-curvatures $w_iH_{ii}$ are constant across the connected component
($D\propto I$)---the \emph{symmetric-$B$ subclass}, an added codimension-$(m{-}1)$ restriction. On this subclass
the symmetry fibre $\mathfrak F=\{Q\in O(m):B'(Q)\text{ symmetric}\}$ is \emph{generically finite for every
$m$}---a theorem, via a pivot. With $A=(I{-}B)^{-1}\Omega^{1/2}$, $\Sigma=AA^\top$, the hedge point $Q$ gives
$B'(Q)=I-\diag(1/((AQ)^{-1})_{ii})(AQ)^{-1}$; putting $C=B'(Q)$, $\Lambda=\Omega'(Q)^{-1}$ one has
$\Sigma^{-1}=(I{-}C)\Lambda(I{-}C)=:\Psi(C,\Lambda)$, and any \emph{real} solution has $\Lambda\succ0$ (Sylvester
congruence of $\Sigma^{-1}$). So $\mathfrak F$ is a $2^m$-fold cover (the sign gauge $Q\mapsto Q\diag(\pm1)$) of
the real points of $\Psi^{-1}(\Sigma^{-1})$, an affine scheme that is \emph{generically zero-dimensional}
(0-dimensional off the critical-value discriminant established below, \emph{not} unconditionally --- at
$B{=}0,\Omega{=}I,m\ge5$ it is positive-dimensional); the normalizer punctures $O(m)$ but the fibre
is Zariski-closed, so no properness is needed. $\Psi:\mathrm{Sym}_0\times\mathrm{diag}\to\mathrm{Sym}$ is a
\emph{dominant} degree-$3$ self-map of an $N{=}\tfrac{m(m+1)}2$-dimensional space ($D\Psi|_{(0,I)}[\dot C,\dot
\Lambda]=\dot\Lambda-2\dot C$, $\det=\pm2^{\binom m2}\ne0$); by the holomorphic inverse function theorem an
infinite fibre forces a Jacobian-degenerate point, so off the (proper, closed) critical-value discriminant
$\Psi^{-1}$ is finite and reduced. Hence for generic $(B,\Omega)$---condition $(\mathrm G)$:
$\Psi^{-1}(\Sigma^{-1})\cap\{\det D\Psi{=}0\}{=}\varnothing$, decidable via $1\in\langle\Psi{-}\Sigma^{-1},\det
D\Psi\rangle$---$\mathfrak F$ is finite, $|\mathfrak F|=2^m r(\Sigma)$ with $1\le r(\Sigma)\le\min\{3^N,
2^{\binom m2}\binom Nm\}$ (refined and $m$-homogeneous B\'ezout, tolerating the excess component $\{\Lambda{=}0\}$
at infinity), every point locally rigid; the truth's own coset gives $r\ge1$. \emph{Genericity is necessary}: at
$B{=}0,\Omega{=}I$, $m\ge5$, $\mathfrak F$ is \emph{infinite}---$Q=I{-}2P$ for $P$ a rank-$2$ symmetric
projection with $\diag P=\tfrac2m\mathbf1$ gives $B'=(1{-}d)I+2dP$ ($d=\tfrac m{m-4}$) symmetric zero-diagonal,
an $(m{-}3)$-dimensional family (constant-rank: the diagonal constraint has rank $m{-}1$ on $G(2,m)$), all
\emph{unstable} ($\rho(B')=1{+}d>1$). At $m{=}2$, write the off-diagonal entry of the positive-definite precision as
$s$ and its diagonal entries as $t_1,t_2>0$. If $s\ne0$ (equivalently, the cross-covariance is nonzero), the
fibre is $\{b,1/b\}$, with $b\bar b{=}1$ by Vieta on $sb^2+(t_1{+}t_2)b+s=0$, and exactly one member has
$|b|<1$. If $s=0$, the same equation and $t_1+t_2>0$ instead give the singleton $b=0$. Thus
uniqueness-under-stability is a theorem at $m=2$ (including the zero-cross-covariance singleton) and a
\emph{conjecture} for $m\ge3$ (consistent with the $m\ge5$ family being entirely unstable).

For a general weighted potential on complete nonzero support, the claimed all-$m$ gauge cut has an explicit
transverse witness. Let $J=\mathbf1\mathbf1^\top$ and take
$B=b(J-I)$, $\Omega=I$, $0<b<1/(m{-}1)$. Its eigenvalues are $b(m{-}1)$ and $-b$ (multiplicity $m{-}1$), so it
is stable. Write $C=I-B=(1+b)I-bJ$, $A=C^{-1}$, and perturb the signed-orthogonal hedge by
$Q_\varepsilon=I+\varepsilon K+O(\varepsilon^2)$ with $K^\top=-K$. If $r=K\mathbf1$, row-normalizing
$(AQ_\varepsilon)^{-1}=Q_\varepsilon^\top C$ back to unit diagonal gives
\[
 \dot B=(1+b)\bigl(K+b\diag(r)-br\mathbf1^\top\bigr),
 \qquad
 \dot\ell_{ij}=\frac{1+b}{b}\bigl(2K_{ij}-br_i+br_j\bigr).
\]
For the fundamental triangles through vertex $1$,
$h_{ij}=\ell_{1i}+\ell_{ij}+\ell_{j1}$, $2\le i<j\le m$, the gradient terms telescope and
\[
 \dot h_{ij}=\frac{2(1+b)}b\bigl(K_{1i}+K_{ij}+K_{j1}\bigr).
\]
The Jacobian of these $\binom{m-1}{2}$ constraints with respect to the private chord variables
$\{K_{ij}:2\le i<j\le m\}$ is
$\frac{2(1+b)}b I_{\binom{m-1}{2}}$, hence nonsingular. The rational minor is therefore not identically zero,
so transversality is generic on the complete-support regular orbit. The orbit chart is immersive at this
witness: if $\dot B=0$, its off-diagonal entries give $K_{ij}=br_i$ for $i\ne j$; skew-symmetry gives
$r_j=-r_i$ for every pair, hence $r=0$ and $K=0$ when $m\ge3$, while for $m=2$ the same equations give
$(1-b)K_{12}=0$ and thus $K=0$. Therefore subtracting the cycle rank from
$\dim\SO(m)=m(m{-}1)/2$ leaves dimension $m{-}1$ for every $m\ge2$ (at $m=2$ the constraint matrix is empty).
On sparse support the admissible skew tangent must additionally satisfy $\dot B_{ij}=0$ at every forbidden edge
and be quotiented by its stabilizer; the intersection dimension is then determined by the rank of the cycle
differential restricted to that actual tangent, not by the complete-support formula.

The gain uses only this transverse differential rank: a non-game restriction with the same rank (e.g.\ a known
coefficient ratio) cuts the orbit identically (numerical evaluation gives $m{=}2$ width $2.5\times10^{-16}$), so the game-specific
content is the refutable restriction of part~(i). $\qed$

\paragraph{\Cref{thm:trek} (three layers) and \Cref{thm:trekng}.} \emph{(b) Uniform soundness.} With
$P=(I-B)^{-1}$ and $\Sigma=P\Omega P^\top$, every entry of $\Sigma$ is a rational function of $(B,\Omega)$
with denominator a power of $\det(I-B)$. The Schur complement $\Sigma_{\bm A_0\bm B_0\cdot\bm C}=
\Sigma_{\bm A_0\bm B_0}-\Sigma_{\bm A_0\bm C}\Sigma_{\bm C\bm C}^{-1}\Sigma_{\bm C\bm B_0}$ is therefore
rational. Under $\rho(B)<1$, $P=\sum_{k\ge0}B^k$ and each covariance entry is a sum over directed
walk-pairs (treks); trek separation is exactly the combinatorial condition that no surviving trek connects
$\bm A_0$ to $\bm B_0$ after conditioning, so \emph{every} Cauchy--Binet term of the numerator carries an identically-zero linkage determinant and the numerator vanishes \emph{termwise} (the no-cancellation \Cref{lem:nocancel} powers the converse, generic completeness). Hence
$\Sigma_{\bm A_0\bm B_0\cdot\bm C}\equiv0$ on the open set $\{\rho(B)<1\}$; a rational function vanishing on a
nonempty open set vanishes identically, so soundness holds at every $\det(I-B)\neq0$, including $\rho(B)\ge1$
(rational-identity continuation). \emph{(c) Generic completeness.} If a trek survives, its monomial is not
cancelled (\Cref{lem:nocancel}), so the numerator polynomial is not identically zero and vanishes only on a
proper subvariety. \emph{(a) Gaussian CI.} For Gaussian $\V$, $\V_{\bm A_0}\indep\V_{\bm B_0}\mid\V_{\bm C}
\iff\Sigma_{\bm A_0\bm B_0\cdot\bm C}=0$, so (b)+(c) give the iff generically. \Cref{thm:trekng}: (b)+(c) use
only second moments, hence hold for any independent finite-variance noise as a zero-partial-covariance
criterion. For jointly Gaussian $\V$, the Schur complement is the conditional covariance, independent of the
conditioning value, and zero conditional covariance is conditional independence. Outside the Gaussian class,
partial covariance is only the covariance of linear-regression residuals and need not be a conditional
cumulant; \Cref{app:cumulant} gives the separate all-order distributional criterion. On the four-node
completeness graph, direct symbolic calculation gives the zero partial-covariance identity, and numerical
evaluation at $\rho(B)=1.2$, $\det(I-B)\neq0$, gives the noncontractive instance. On a separate five-node
graph formed by adding a common source to $V_0,V_1$, a numerical point evaluation gives a nonzero completeness
witness.
$\qed$

\paragraph{\Cref{prop:jac} (local criterion).} With $\U=u_0+\varepsilon Z$ and $g\in C^1$, the first-order
Taylor remainder $r(\varepsilon Z)=\V-g(u_0)-\varepsilon JZ$ satisfies $\E\|r/\varepsilon\|^2\to0$ by moment
domination, so $\mathrm{cov}(\V)=\varepsilon^2 J\Omega J^\top+o(\varepsilon^2)$. Under \textup{(FRC)} with
$\Omega\succ0$ the $\bm C$-block covariance is invertible for small $\varepsilon$, so the conditional
cross-covariance is $\Sigma_{\bm A_0\bm B_0\cdot\bm C}=\varepsilon^2\,\mathrm{Schur}(J\Omega J^\top)+o(\varepsilon^2)$;
if $g\in C^3$ with finite sixth moments the $\varepsilon^3$ term (a linear$\times$Hessian cross-moment) is an
odd Gaussian moment and vanishes, sharpening the error to $O(\varepsilon^4)$. We \emph{drop} the earlier
mutual-information expansion: conditional MI is invariant under the common scaling $\V\mapsto\varepsilon\V$, so
it does not converge to $I(J_{\bm A_0}Z;J_{\bm B_0}Z\mid J_{\bm C}Z)$ when $J\Omega J^\top$ is rank-deficient---
then $\V_{\bm A_0},\V_{\bm B_0}$ can share a source and $I$ stays bounded away from $0$ for every $\varepsilon>0$
while the covariance Schur complement is $\Theta(\varepsilon^2)$. The criterion is thus a second-moment
statement, exact to leading order; a sufficient condition for exact CI is given by \Cref{thm:funcsep}. Direct small-noise calculation
corroborates the leading-order relation. $\qed$

\paragraph{\Cref{thm:funcsep} and \Cref{thm:pss}.} Because a measurable $k$ satisfies
$k(\V_{\bm C})=W$ a.s. and $\V_{\bm C}=h_C(W)$, the two variables generate the same conditioning information
modulo null sets. Given $W$, $\V_{\bm A_0}=h_A(\U_R,W)$
is a measurable function of $\U_R$ (and $W$) and $\V_{\bm B_0}=h_B(\U_S,W)$ of $\U_S$; as $\U_R\indep\U_S\mid
W$, measurable functions of conditionally independent inputs are conditionally independent. Equivalently, their
regular conditional kernel given $W$ is the product of the two induced marginal kernels, so
$\V_{\bm A_0}\indep\V_{\bm B_0}\mid\V_{\bm C}$. This
uses no distributional assumption, so it is exact for any noise law. \Cref{thm:pss} is the instance on the
counterexample of \Cref{prop:complete}(i): at fixed $V_2,V_3$ the structural equations $V_2=aV_0+dV_3+U_2$ and
$V_3=bV_1+eV_2+U_3$ decouple into a constraint on $(V_0,U_2)$ and one on $(V_1,U_3)$; with $W=(V_2,V_3)$,
$\U_R=\{U_0,U_2\}$, $\U_S=\{U_1,U_3\}$ independent, functional separation gives $V_0\indep V_1\mid V_2,V_3$.
Finite-sample distance correlations after cubic-basis residualization for the named Gaussian and Laplace draws
are consistent with the same conditional independence. $\qed$

\paragraph{\Cref{thm:regbranch} and \Cref{cor:games}.} Joint Borel measurability of the selected root $g$ is an
assumption, not a consequence of the implicit-function theorem. At fixed $p$, $F(\cdot,p)\in C^1$ and
nonsingular square $D_vF$ give local uniqueness in the endogenous coordinate. On an open chart where $F$ is
jointly $C^1$ and the selected root is the unique regular root, the parameterized implicit-function theorem
gives a local $C^1$ solution; uniqueness makes overlapping charts agree, and differentiating $F(g(p),p)=0$
gives $D_pg=-D_vF^{-1}D_pF$. Nothing implies global root uniqueness or continuity across other strata.
Nonsingularity is algebraic unique solvability of the linearization---for the linear case $D_vF=I-B$,
nonsingular iff $\det(I-B)\neq0$, which can hold with $\rho(B)\ge1$ (grid:
$\det=1+2c\neq0$, $\rho=\sqrt{2c}$; ADM).

For a constrained smooth game, write the coupled KKT conditions on the actual active manifold as a generalized
equation. Strong regularity, composed with locally Lipschitz parameter dependence of the residual and constraint
data, gives a locally single-valued Lipschitz solution branch; the relevant constraint qualification and
playerwise SSOSC on the true critical cones give strict local best responses. Jointly $C^1$ data and a
nonsingular reduced active system sharpen the branch to $C^1$. A global Nash conclusion requires a separate
global sufficiency and an existence/uniqueness condition for the stated generalized/variational equilibrium
concept. Likewise, constant rank of causal constraints supplies only a local manifold; isolation requires a
transverse nonsingular or strongly regular reduced system plus the declared Borel selection. Brouwer and the
unique entropy maximizer supply the separately stated QRE and CE branches. $\qed$

\paragraph{\Cref{thm:backdoor} (intervention indicators) and \Cref{prop:resolvent}.} In the augmented SCM,
$I_X=h_I(\bm Z,E_I)$ is an externally assigned regime variable with $I_X\!\to\!X$, where $E_I$ is independent
of all structural sources and every $\bm Z\!\to\!I_X$ assignment edge is represented. By
\Cref{thm:sound}, $\sigma$-separation $I_X\perp_\sigma Y\mid X,\bm Z$ gives
$Y\indep I_X\mid X,\bm Z$. Consistency and observational-regime positivity exchange the conditional outcome
kernel with $\Prob_0(Y\mid X{=}x,\bm z)$; standardizing over the target baseline $\Prob_0(\bm Z)$ yields the
adjustment formula, with the left side the re-solved post-surgery equilibrium law. The regime form fixes
exchangeability graphically and avoids the
incoming/outgoing-edge ambiguity of cyclic surgery. \Cref{prop:resolvent}: from $\V=(I-B)^{-1}(\Gamma X+\U)$,
$\partial\E[\V]/\partial X=(I-B)^{-1}\Gamma$, so $\tau_{X\to Y}=e_Y^\top(I-B)^{-1}\Gamma$; for the
$2$-cycle $(I-B)^{-1}=\tfrac1{1-de}\big(\begin{smallmatrix}1&d\\ e&1\end{smallmatrix}\big)$ gives $b/(1-de)$.
Direct symbolic evaluation of the two-cycle resolvent gives the stated effect, and numerical adjustment recovers
it in the confounded cyclic example. $\qed$

\paragraph{\Cref{thm:gauge} (gauge).} \emph{(i)} In the noiseless submodel the observational law
$\X=M\bm\eta$, $M=H(I-B)^{-1}\Omega^{1/2}$, with independent $\bm\eta$ determines $M$ up to right
permutation$+$scaling under LiNG (\citealp{Comon1994ica}) and, in the Gaussian second-moment regime, up to right
$\Ortho(d)$; with additive sensor noise, the corresponding acquisition claim is conditional on a separate
noisy-ICA theorem for the declared joint source/noise model. The non-identification construction needs no such
acquisition: fix the truth's $M$ and its same admissible joint pair $(\bm\eta,\epsilon)$. For $R\in\GL(d)$, write
$A:=(I-B)^{-1}\Omega^{1/2}$ and $A':=RA$. Choose
$\Omega'^{1/2}=\diag(1/\,(A'^{-1})_{ii})$ so that $(I-B')=\Omega'^{1/2}A'^{-1}$ has unit diagonal, i.e.\
$B':=I-\Omega'^{1/2}A'^{-1}$ has zero diagonal, and set $H'=HR^{-1}$. Then
$H'(I-B')^{-1}\Omega'^{1/2}=HR^{-1}A'=HA=M$, so pathwise
$\X'=M\bm\eta+\epsilon=\X$ and the complete law is unchanged. \emph{In-class (legal ECG).} The candidate
is a legal ECG iff $\rho(B')<1$ --- \emph{not} automatic: a generic $R$ with nonzero normalizer can give
$\rho(B')\ge1$ (e.g.\ $C'=[\begin{smallmatrix}1&2\\-2&1\end{smallmatrix}]$ has positive determinant and yields
$\rho(B')=2$), so the family
of \emph{factorizations} exceeds the legal class. Conversely, for \emph{any} legal target $(B'',\Omega'')$
(zero-diagonal $B''$, $\rho(B'')<1$, diagonal $\Omega''\succ0$), $R:=(I-B'')^{-1}\Omega''^{1/2}A^{-1}$ gives
$A'=(I-B'')^{-1}\Omega''^{1/2}$, positive normalizer $(A'^{-1})_{ii}=\Omega''^{-1/2}_{ii}>0$, and returns
$(B'',\Omega'')$ exactly; so $R\mapsto(B',\Omega')$ is a diffeomorphism from the stable chart
$G^+:=\{R:(RA)^{-1}\text{ positive-diagonal},\ \rho(B')<1\}$ onto the full legal space, and \emph{every} legal
$B$ is reproduced. For $d\ge2$, $B$ is completely non-identified within the class, over the full
$(d^2{-}d)$-dimensional $B$-space (no genericity on the truth). At $d=1$, zero diagonality forces $B=0$, so
only the sensor/noise scaling varies. The differential at $R{=}I$ is $\dot B'=CE-\diag(CE)C$ ($C=I-B$
unit-diagonal): for zero-diagonal $\Delta$, $E=C^{-1}\Delta$ gives $\dot B'=\Delta$ (surjective onto the
$(d^2{-}d)$-dimensional zero-diagonal space), with kernel $\{C^{-1}DC:D\text{ diagonal}\}$ of dimension $d$
(the noise-rescaling gauge). $G^+$ is open and connected for every $d$; $G^+=\GL^+(1)$ when $d=1$. For
$d\ge2$ it is a proper subset of $\GL^+(d)$ containing $I$, with a nonempty-interior complement witnessed by
$C'=[\begin{smallmatrix}1&2\\-2&1\end{smallmatrix}]$, and it is not closed under multiplication of two
truth-centred coordinates (the product can have $\rho>1$, e.g.\ $0.9\!\to\!9.47$). The complement has no
canonical volume fraction (measure-dependent: $0,\tfrac14,1-\ln2$ on natural subfamilies of the $d{=}2$
reference truth). For the stable-factorization-chart example, numerical evaluation of a distant legal target
gives law agreement to $3\times10^{-15}$ with $\rho(B')<1$; the same calculation gives the out-of-class witness
$\rho(B')=2$ and the composition exit. \emph{(ii)}
Separately, right-multiplying $A$ by $Q\in\Ortho(m)$ on a source component's block leaves $\Sigma=AA^\top$ invariant
(the admissible positive-variance gauge orbit is the connected signed-$\SO(m)$ branch, with sign absorption)
($AQQ^\top A^\top=AA^\top$ follows by direct symbolic calculation), giving the hedge; its dimension is
$\dim\SO(m)=m(m{-}1)/2$. Acyclic graphs have singleton components, so $m=1$ and the gauge is trivial.
$\qed$

\paragraph{\Cref{lem:recover} (closed form).} A mechanism intervention on $j$ sets $B^{(j)}=B-e_jb_j^\top$
(row $j$ zeroed), so $I-B^{(j)}=(I-B)+e_jb_j^\top$. Sherman--Morrison gives
$P^{(j)}=(I-B^{(j)})^{-1}=P-\dfrac{(Pe_j)(b_j^\top P)}{1+b_j^\top Pe_j}$. Left-multiplying by $H$,
$D_j=H(P^{(j)}-P)=-\dfrac{(HPe_j)(b_j^\top P)}{1+b_j^\top Pe_j}$, whose left factor $HPe_j$ is column $j$ of
$M_0=HP$. Now $b_j^\top P$ is row $j$ of $BP=(I-(I-B))P=P-I$, so $b_j^\top P=(P-I)_{j,:}$ and the
denominator $1+b_j^\top Pe_j=1+(P-I)_{jj}=P_{jj}$. Therefore $D_j=-M_0e_j(P-I)_{j,:}/P_{jj}$. Solving the
rank-at-most-one outer-product equation $D_j=-(M_0e_j)w_j^\top$ in least squares with the \emph{known} left factor gives
$w_j^\top=-(M_0e_j)^\top D_j/\|M_0e_j\|^2=(P-I)_{j,:}/P_{jj}$ exactly. Its $j$-th entry is $w_{jj}=(P_{jj}-1)/
P_{jj}=1-1/P_{jj}$, so $P_{jj}=1/(1-w_{jj})$ and $(P-I)_{j,:}=P_{jj}w_j^\top$, i.e.\ $P_{j,:}=e_j^\top+
P_{jj}w_j^\top$. Stacking rows gives $P$; $B=I-P^{-1}$ (zero diagonal by construction) and $H=M_0P^{-1}$.
Direct symbolic calculation also gives the Sherman--Morrison identity,
$b_j^\top P=(P-I)_{j,:}$, and the denominator $=P_{jj}$; numerical recovery on 8 named graphs gives
$\|\widehat B-B\|,\|\widehat H-H\|<10^{-8}$. These finite calculations corroborate the identities and recovery
argument above. $\qed$

\paragraph{\Cref{thm:linear} (linear recovery).} \emph{Acquisition of the aligned response maps.} In environment $e$,
let probe arm $\ell$ add the known latent-coordinate vector $\delta^{(\ell)}$ while keeping $B^{(e)}$ fixed.
Writing $P^{(e)}=(I-B^{(e)})^{-1}$ and $\X=HP^{(e)}(\U+\delta^{(\ell)})+\epsilon$, the no-direct-sensor-effect
and common conditional sensor mean
$\E[\epsilon\mid e,\ell]=b_e$ for all $\ell\in\{0,\ldots,q\}$ give
\[
 R_e:=\bigl[\E\X^{(e,\ell)}-\E\X^{(e,0)}\bigr]_{\ell=1}^q
   =HP^{(e)}\mathsf D=M_0^{(e)}\mathsf D.
\]
For $\rank\mathsf D=d$, the displayed right inverse yields
$M_0^{(e)}=R_e\mathsf D^\top(\mathsf D\mathsf D^\top)^{-1}$, in common observed coordinates and the labelled,
calibrated latent coordinates. A known arm-specific sensor-mean difference can be subtracted before the same
calculation. If the conditional sensor mean is unrestricted, the collision
$R_e=M_0^{(e)}\mathsf D+N_e=(M_0^{(e)}+A)\mathsf D+(N_e-A\mathsf D)$ prevents recovery; if
$\rank\mathsf D<d$, any nonzero $A$ with $A\mathsf D=0$ gives a collision even when $N_e=0$. Thus both
hypotheses are necessary for this acquisition argument. Separately, in the noiseless submodel (or under a
separate noisy-ICA identifiability theorem), independent ICA returns only $M^{(e)}\Pi_e\Lambda_e$; without a
joint theorem or anchors fixing $\Pi_e,\Lambda_e$ and separating the source
scales, the rank-at-most-one differences between representatives are not structural $M_0^{(e)}-M_0$ differences.

\emph{(a) Sufficiency.} If $T=[d]$, every node yields a rank-at-most-one
$D_j$, so \Cref{lem:recover} reconstructs every row of $P$, hence $(H,B)$ up to the labelling/scale of
$\simeq$ (and, for an intervened source component, with the hedge collapsed because the intervention anchors
the within-component direction). If exactly one node $r$ of a source component is un-targeted, the other
$d{-}1$ rows of $P$ are recovered as above; row $r$ is \emph{not} pinned by environment matching---Lemma~0 of
\Cref{prop:completion} shows the mixings $M_0^{(e)}$ ($e\neq r$) are reproduced for \emph{every} candidate row
(the compensating $H'=M_0P'^{-1}$ absorbs its appearance)---so the only remaining constraints are the $d$
zero-diagonal equations, which identify row $r$ \emph{iff} the completion matrix $L$ of \Cref{prop:completion}
is nonsingular ($\corank L=\#$ missing children; the sink's $\corank L=d-1$ shows this is not
automatic). \emph{(b) Necessity.} Let $r,s$ be two un-targeted rows and
$E_t=t(e_s-C_{rs}e_r)e_r^\top$. Since $E_t$ has zero rows on every target,
$(I+E_t)^{-1}$ has identity rows there; hence $P_t=(I+E_t)^{-1}P$ shares every targeted row of $P$. Moreover
$\diag(CE_t)=0$: only column $r$ can contribute, and
$(CE_t)_{rr}=t(-C_{rr}C_{rs}+C_{rs})=0$ because $C_{rr}=1$. Thus
$C_t=C(I+E_t)$ has unit diagonal, and Lemma~0 gives exact matching of every target environment for
$B_t=I-C_t$ and $H_t=H(I+E_t)=M_0C_t$. Invertibility and $\rho(B_t)<1$ persist for all sufficiently small
$t$. For $t\ne0$, $I+E_t$ has the off-diagonal entry $(s,r)=t$ and the diagonal entry $(s,s)=1$, so it is not
monomial; full-column-rank $H$ then rules out $H_t=H\Pi\Lambda$. This is a genuine continuous
non-$\simeq$ collision whenever two rows are un-targeted, independent of any second-moment example; finite
collision searches corroborate only the named instances. \emph{(c) Minimal $K$.} Combining (a) and (b): for a single size-$d$ full-rotational source
component, $K=d-2$ is non-identified (b), and $K=d-1$ is sufficient (a) whenever the completion matrix is
nonsingular---the closed-form completion criterion \Cref{prop:completion} gives $\corank L=\#\{k\neq r:B_{kr}=0\}$ for \emph{all} $d$, so nonsingularity is equivalent to the un-targeted node directly parenting every other node of the graph
(no strong-connectivity hypothesis)---else $K=d$ is required. Finite numerical searches in Exp.~8 corroborate
the $K=d-2$ collisions ($d\in\{3,4\}$) and $K=d-1$ recovery on random (universal-parent) instances. The same
closed-form characterization implies that every directed ring of size $d\ge3$ collides at $K=d-1$ and that a
sink collides at $K=d-1$ in $d-1$ directions; separate completion-rank checks cover rings in dimensions 3, 4,
5, and 6 and include an explicit three-dimensional collision family. This
Jacobian argument is a \emph{generic local-identification} criterion: let $\phi$ be the identifying map
from the parameters to the observed baseline and un-intervened environments $\{M_0,M_0^{(e)}\}_{e\ne r}$ under
the $K{=}d{-}1$ design; the un-targeted row is pinned (up to the source gauge) iff $\partial\phi$ has full
column rank modulo the gauge tangent, and since $\partial\phi$ is rational in the parameters, full rank at a
single witness point extends to a Zariski-dense set. Global uniqueness at nonsingular $L$ comes instead from the
explicit linear completion system in part~(a), which has one row completion; equivalently the complete legal
fibre is one declared-equivalence class. On a universal-parent support the identifying-Jacobian corank equals the gauge
dimension at $K{=}d{-}1$ --- identical to full coverage $K{=}d$ --- and strictly exceeds it at $K{=}d{-}2$;
the rational-Jacobian argument above supplies the generic conclusion, while finite calculations corroborate
the corank comparison for $d\le5$ support-known and $d\le4$ non-Gaussian unknown-mixing. This supplies the
missing-row completion the closed form of \Cref{lem:recover} otherwise left to Exp.~8. For
known support there is no SCC scalar target formula: \Cref{prop:knownsupport} proves that the operative object is
the support-constrained inverse completion map and its entire legal fibre. Its directed-$3$-cycle calculation
identifies from one target and directly refutes $\sum_S(|S|-1)$ even as a universal lower bound. (Exp.~7 plots the closed-form row-wise
estimator, which uses $K=d$; Exp.~8 corroborates, on its named universal-parent instances, the theorem-proved
information-theoretic $K=d-1$ identification threshold.) \emph{(d) Sample
complexity.} The identified-regular-instance hypothesis is load-bearing: for $d\ge2$, calibrated probes alone
recover $M_0$ but not $B$ in general; at $d=1$, zero diagonality fixes $B=0$. For example, at $d=p=2$, $K=0$,
$\mathsf D=I_2$, and zero sensor bias, the two stable models
\[
 B=0,\quad H=I,
 \qquad
 B'=\begin{pmatrix}0&1/2\\0&0\end{pmatrix},\quad H'=I-B'
\]
have $H(I-B)^{-1}=H'(I-B')^{-1}=I$ and therefore identical calibrated probe responses although $B'\ne B$.
Thus no rate for $B$ is claimed outside an identified branch of (a)/(c). On such a branch, under the stated
finite-covariance, CLT-valid arm sampling, each entry of $\widehat R_e$ has error
$O_p(n^{-1/2})$; multiplication by the fixed right inverse
$\mathsf D^\top(\mathsf D\mathsf D^\top)^{-1}$ gives the same rate for $\widehat M_0^{(e)}$. Each subsequent
step of \Cref{lem:recover}---the outer-product solve, the scalar
$P_{jj}=1/(1-w_{jj})$, the inverse $P^{-1}$, and $H=M_0P^{-1}$---is locally Lipschitz with constants bounded
by $\|P\|=\|(I-B)^{-1}\|$ (\emph{not} $(1-\rho(B))^{-1}$, which fails for non-normal $B$),
$(\min_j|1-w_{jj}|)^{-1}$, and $\sigma_{\min}(M_0)^{-1}\le\|I-B\|/\sigma_{\min}(H)$. In particular,
$dB=P^{-1}(dP)P^{-1}=(I-B)(dP)(I-B)$ contributes $\|I-B\|^2$; a one-missing-row completion contributes the
inverse singular value of the actual scale-fixed completion system ($L$ in the row-$P$ normalization, or
$\widetilde L=(\det P)L$ together with its declared scale in adjugate coordinates). Composing them gives, at
\emph{fixed} $(p,d,K,q)$, $\|\widehat B-B\|_F=O_p(\kappa\, n^{-1/2})$, where $\kappa$
also grows with $\|(I-B)^{-1}\|$, $\|I-B\|^2$, $\sigma_{\min}(H)^{-1}$, the applicable completion conditioning,
$\|\mathsf D^\top(\mathsf D\mathsf D^\top)^{-1}\|$, and
$\max_j|P_{jj}|=(\min_j|1-w_{jj}|)^{-1}$ (larger loop gain worsens conditioning). Exp.~9 is finite-instance
evidence consistent with the predicted $1/\sqrt n$ decay at $d=3$, without establishing the exponent. A
dimension-explicit rate is \emph{not} claimed: uniform entrywise bounds give only the loose upper aggregation
$O_p(d/\sqrt n)$ over the $\sim d^2$ entries, while a lower order or uniform-in-$d$ guarantee needs
nondegenerate covariance and matrix-concentration assumptions, deferred. $\qed$

\paragraph{\Cref{prop:shift} (shift insufficiency).} Let $d\ge2$. A shift intervention adds $\bm\delta_e$ to the
structural equation, leaving $B$ (hence $P$) unchanged; the equilibrium mean shifts by $P\bm\delta_e$, so
at the structural response level the observed mean shifts by $M_0\bm\delta_e$. Under the mean-stable/no-direct-
sensor-effect conditions of \Cref{thm:linear}, calibrated single-target shifts therefore identify the columns
of $M_0=HP$ (up to unknown shift scales when uncalibrated) and nothing more. Given $M_0$, choose any distinct
nearby stable zero-diagonal $B'$ and set $H'=M_0(I-B')$; then
$H'(I-B')^{-1}=M_0(I-B')(I-B')^{-1}=M_0$, reproducing every shift response. Numerical evaluation of this
shift-response construction gives a match of $8\times10^{-16}$ with $\|B'-B\|=2.03$. At $d=1$, zero diagonality already fixes $B=0$.
When $H=I$, $M_0=P$ and $B=I-M_0^{-1}$ is determined---the backShift/observed case. $\qed$

\paragraph{\Cref{prop:nongauss} (non-Gaussianity).} In the noiseless linear submodel, second moments fix
$M^{(e)}$ only up to right $\Ortho(d)$. Under LiNG, the Darmois--Skitovich/Comon theorem
\citep{Comon1994ica} fixes $M^{(e)}$ up to permutation$+$scale, \emph{provided} the recovered components are
independent. With additive sensor noise, the same conclusion is conditional on a separate noisy-ICA
identifiability result for the declared noise law. The hedge alternative
$A\mapsto AQ$ reinterprets the sources as $Q^\top\bm\eta$; these are independent iff $Q$ is monomial under
the LiNG assumptions (by Comon), whereas every $Q\in\Ortho(m)$ is admissible for an isotropic Gaussian
component. Hence LiNG rules out every non-monomial $Q$, collapsing the signed-$\SO(m)$ hedge to
permutation$+$scale; isotropic Gaussian sources leave it intact. Numerical evaluation of the discriminator on one instance gives
the (necessary, not sufficient) $4$th-order dependence proxy of $Q^\top\bm\eta$ as $0.004$ (Gaussian) vs.\
$0.326$ (non-Gaussian); a full independence test gives the same dichotomy. $\qed$

\paragraph{\Cref{thm:block} (negatives; positive open).} \emph{Within-component collision.} For a source
block $S\subset\R^m$ with $m\ge2$ and isotropic-Gaussian sources $\bm\eta\sim N(0,I_m)$, a twist
$\sigma:(r,\theta)\mapsto(r,R_{s(r)}\theta)$
(using a fixed nontrivial one-parameter $R_t\in\SO(m)$, with smooth $s$ constant near $0$ for smoothness and
$r\mapsto R_{s(r)}$ nonconstant on the radial support) fixes each source radius
and rotates its sphere, on which the law is uniform; it therefore preserves the source law and hence every
labelled observational and block-isotropic/radial-interventional law in the stated class, while re-factorizing
within the component. The admissible
within-component gauge is thus strictly larger than $\Ortho(m)$---an infinite-dimensional isotropy-preserving
family. In a numerical linear-in-source rotation instance, the relative interventional law gap is $\approx2\%$
and the factorization gap is $0.97$. \emph{Across-component collision.} Let $A\in\R^{m_A}$ and
$B\in\R^{m_B}$ be isotropic blocks with $m_A\ge2$, $m_B\ge1$, naming as $A$ a block of dimension at least two.
Fix a nontrivial one-parameter rotation $R_t\in\SO(m_A)$ and choose smooth $s$ so that
$b\mapsto R_{s(\|b\|^2)}$ is nonconstant on the support of $B$.
$T:(A,B)\mapsto(R_{s(\|B\|^2)}A,\,B)$ is smooth and invertible; conditional on $B$ it rotates
the isotropic $A$, so $(R_{s(\|B\|^2)}A,B)\eqd(A,B)$ jointly---every labelled observational and interventional
environment preserving the required radial/isotropic product law is preserved---yet $T$'s first block depends
on $B$, so $h':=h\circ T^{-1}$ is an admissible decoder whose induced
component partition differs from $h$'s. In a numerical instance, the joint-law energy-distance test gives
$p\approx0.77$, indistinguishable within a permutation null. A nonzero intervention response
(``faithfulness'') excludes neither collision, since both preserve the
intervened laws. \emph{Positive.} A per-coordinate perfect $\doop$ inside a component breaks the isotropy and
collapses the gauge in that direction (in the \emph{linear} case, \Cref{thm:linear}, this is the rank-at-most-one
difference of \Cref{lem:recover} and collapses it entirely; for general $h$ it is a local score-gradient
constraint, \emph{not} a linear rank-at-most-one difference). A general positive block-identification theorem for diffeomorphic $h$
is defeated by the collisions above without further restriction and is left \emph{open}; routes (ii)--(iii)
(block-separable $h$; paired/counterfactual interventions) add assumptions we do not prove, while route (i) is
closed for the \emph{source}-block factorization by \Cref{thm:blockid-source}. $\qed$

\paragraph{\Cref{thm:blockid-source} (sufficient source-block identification condition).} Work in the source frame; write an
admissible alternative as $X=g'(Z')$ with $Z'=(Z'_j)$ independent blocks, each environment $e$ reweighting one
alternative block $\tau(e)$, $p'_e=p'_0 e^{r'_e(z'_{\tau(e)})}$. The log-ratio $\rho_e=\log(p_e/p_0)$ is
frame-invariant (the $\log|\det|$ terms cancel in the ratio), so $\rho_e=r'_e(Z'_{\tau(e)})$ as a random variable.
\emph{(I$_i$) fixes the grouping.} Suppose $\tau$ is non-constant on some $E_i$; pick $j_0\in\tau(E_i)$ and
bipartition $E_i=E^1\sqcup E^2$ with $E^1=\{e\in E_i:\tau(e)=j_0\}$ and $E^2=E_i\setminus E^1$ (both nonempty).
Every $\rho_e$ ($e\in E^1$) is $\sigma(Z'_{j_0})$-measurable and every $\rho_f$ ($f\in E^2$) is
$\sigma((Z'_j)_{j\neq j_0})$-measurable; these $\sigma$-algebras are independent under the law of $Z'$, so the two
tuples are independent---contradicting \textup{(I$_i$)}. Hence $\tau$ is constant on each $E_i$,
$\tau(E_i)=:j(i)$, and $j(\cdot)$ is well defined (a non-constant $\rho_e$ cannot be measurable w.r.t.\ two distinct
independent blocks). \emph{(R)$+$counting fix the frame (inverse-Jacobian form).} Invert frame-invariance:
$\nabla r'_e(\phi(z))=D\phi(z)^{-\top}\nabla\rho_e(z)$ is supported on block $j(i)$ for $e\in E_i$; by \textup{(R)}
the $\nabla\rho_e(z)$ span $\R^{\text{block }i}$ at a.e.\ $z$, so $D\phi(z)^{-\top}(\R^{\text{block }i})\subseteq
\R^{\text{block }j(i)}$ a.e., hence everywhere by continuity of $D\phi$. Writing $\psi:=\phi^{-1}$ this reads
$\partial\psi_k/\partial z'_l=0$ for $k\in\text{block }i$, $l\notin\text{block }j(i)$: each true block is a
function of its single alternative block, $z_i=\psi_i(z'_{j(i)})$ (the annihilation is of the \emph{inverse}
Jacobian, $D\psi$---not $D\phi$). \emph{Counting.} Each $i$ lies in exactly one $G_j:=\{i:j(i)=j\}$
($|E_i|\ge n_i\ge1$), and the block-structured $D\psi$ is invertible, so $n'_j=\sum_{i\in G_j}n_i$; hence no
alternative block is environment-free and $m'\le m$---the truth is the \emph{unique finest} representation, every
alternative a coarsening. At equal count $m'=m$ each $|G_j|=1$, $j(\cdot)$ is a size-respecting bijection, and
$g'^{-1}\!\circ g=\bigoplus_i\phi_i$ with each $\phi_i$ a block diffeomorphism (onto an open image; the matched-image
class makes it onto $\R^{n_i}$). \emph{An (R)-deficient collision construction, not a universal converse.}
The explicit corank-one witness is the isotropic rotation of \Cref{thm:block} ($n_i{=}2$), which preserves every
law while re-partitioning. More generally, for each of the $q$ vector fields required in the theorem,
\[
 \mathrm{div}(p_e w_a)=(p_e/p_0)\mathrm{div}(p_0w_a)
   +p_e\,w_a\!\cdot\!\nabla\rho_e=0.
\]
Thus its complete flow preserves every $p_e$; pairwise commutation makes the flows a joint $\R^q$ action, and
pointwise independence makes its orbits $q$-dimensional. With only one such field the conclusion is only a
one-parameter flow, and score corank alone proves no collision. \emph{Realization.} By \Cref{lem:cyclesRI} (and \Cref{rem:cycleshelp}), feedback-$\doop$ at \emph{non-resonant}
intervention variances supplies \textup{(R)}$+$\textup{(I$_i$)} on a strongly-connected standardized-Gaussian
source SCC. Numerical evaluation at the selected non-resonant intervention variances on the two- and
three-cycle examples gives full actual-gradient rank and an irreducibility statistic, with rank collapse at the
resonance $s^2{=}c$; these
finite-dimensional calculations corroborate the preceding argument. $\qed$

\paragraph{\Cref{thm:special2} (special cases).} \emph{(a)} For acyclic $B$, $P=(I-B)^{-1}$ is triangular and
each node is a singleton component; \Cref{lem:recover} reduces to recovering a triangular $P$ from per-node
interventions, i.e.\ linear interventional CRL \citep{Squires2023linear} (numerical evaluation gives acyclic
recovery exact to $1.4\times10^{-16}$), and the diffeomorphic case, \emph{under the isotropy-breaking perfect interventions of}
\citet{vonKugelgen2023nonparam}, to their result; with singleton components the within-component hedge is
trivial, and perfect interventions break the across-component collision of \Cref{thm:block} (which needs
labelled isotropy-preserving interventions and a rotated block of dimension at least two). \emph{(b)} A single
full-rotational source component of size $m\ge2$ with no intervention is
the hypothesis of the observed-layer hedge theorem (\Cref{thm:hedge}); \Cref{thm:gauge}(ii) gives the signed-$\SO(m)$
obstruction (numerical evaluation gives effects $0.333$ vs.\ $0.685$ at identical $\Sigma$). \emph{(c)} The grid-market sets
$B=\big[\begin{smallmatrix}0&-b\\-d&0\end{smallmatrix}\big]$ (linear reaction functions, $\rho=\sqrt{bd}<1$)
and the attenuated local SIS response loop is derived as follows. The myopic endemic response
$f(b)=1-\gamma/[\beta_0(1-b)]$ fixes $b^\star=i^\star=1-\sqrt{\gamma/\beta_0}$ and has
$f'(b^\star)=-\gamma/[\beta_0(1-b^\star)^2]=-1$. Therefore the centered response equations
$i=i^\star+\lambda_i\{f(b)-i^\star\}+u_i$ and
$b=i^\star+\lambda_b(i-i^\star)+u_b$, with $(\lambda_i,\lambda_b)=(0.55,0.45)$, have exact local Jacobian
$B=\big[\begin{smallmatrix}0&-0.55\\0.45&0\end{smallmatrix}\big]$ and
$\rho(B)=\sqrt{0.2475}<1$. Exp.~14 simulates centered local deviations in one common declared scale under this model, not raw
bounded proportions. Thus both generators are latent equilibrium generative models. In Exp.~14 each un-targeted node directly parents the other node, so \Cref{prop:completion} applies at
$K=d-1$; the reported held-out object is the unobserved target's complete aligned response map. $\qed$

\section{Conditional-cumulant hierarchy (non-Gaussian opening)}
\label{app:cumulant}
Fix a joint regular conditional law of $(\V_{\bm A_0},\V_{\bm B_0})$ given $\V_{\bm C}=c$ and its induced
marginal kernels. Suppose there is one common $\Prob_{\V_{\bm C}}$-full set of $c$ on which the joint and
marginal conditional characteristic functions are analytic and non-vanishing near the origin and the joint
conditional law is moment-determinate. Write
$\phi_{\bm A_0,\bm B_0\mid\bm C}(s,t\mid c)$ for the joint conditional characteristic function and define the
mixed conditional cumulants by the mixed derivatives of its distinguished local logarithm at $(0,0)$.
Then
\[
 \V_{\bm A_0}\indep\V_{\bm B_0}\mid\V_{\bm C}
 \quad\Longleftrightarrow\quad
 \kappa_{\alpha,\beta}(c)=0\quad\text{a.e.\ for every }|\alpha|,|\beta|\ge1.
\]
Indeed, conditional independence factorizes the conditional characteristic function and kills all mixed
derivatives. Conversely, vanishing of every mixed Taylor coefficient gives, near the origin,
$\log\phi_{\bm A_0,\bm B_0\mid\bm C}=\log\phi_{\bm A_0\mid\bm C}+
\log\phi_{\bm B_0\mid\bm C}$. This gives the product mixed-moment sequence, and joint conditional moment
determinacy---rather than analytic continuation of a merely local identity---identifies the factorized
conditional law. This is a \emph{distributional criterion}, not a non-Gaussian graphical
iff. Functional separation (\Cref{thm:funcsep}) supplies an exact structural sufficient condition for any noise
law; a graph characterization forcing the entire conditional-cumulant hierarchy beyond that sufficient condition remains
open.

In the jointly Gaussian subclass, the order-two conditional cumulant is guaranteed to be the constant Schur
complement of the covariance matrix, so \Cref{thm:trek} is the order-two member of this hierarchy there. Off the
Gaussian family this equality need not hold: partial covariance and conditional covariance can differ. For independent Rademacher
$U_1,U_2,U_3$, let $A=U_1+U_3$, $B=U_2+U_3$, and $C=U_1+U_2+\tfrac12U_3$. Then
$\operatorname{Cov}(A,B)=1$, $\operatorname{Cov}(A,C)=\operatorname{Cov}(B,C)=3/2$, and
$\operatorname{Var}(C)=9/4$, so the partial covariance is $1-(3/2)^2/(9/4)=0$; conditional on $C=1/2$,
$(A,B)$ is equally likely to be $(2,0)$ or $(0,2)$, hence $\operatorname{Cov}(A,B\mid C=1/2)=-1$.
The displayed Rademacher calculation exhibits this strict gap; it does not prove a graph iff.

\end{document}